\newcommand{\w}{\omega}
\newcommand{\IN}{\mathbb N}
\newcommand{\IZ}{\mathbb Z}
\newcommand{\IQ}{\mathbb Q}
\newcommand{\F}{\mathcal F}
\newcommand{\Ra}{\Rightarrow}
\newcommand{\dom}{\mathrm{dom}}
\newtheorem{theorem}{Theorem}[section]
\newtheorem{proposition}[theorem]{Proposition}
\newtheorem{corollary}[theorem]{Corollary}
\newtheorem{lemma}[theorem]{Lemma}
\newtheorem{problem}[theorem]{Problem}
\newtheorem{example}[theorem]{Example}
\theoremstyle{definition}
\newtheorem{definition}{Definition}
\newtheorem{remark}{Remark}
\title{On continuous self-maps and homeomorphisms of the Golomb space}
\author{Taras Banakh, Jerzy Mioduszewski and S\l awomir Turek}
\address{T.Banakh: Ivan Franko National University of Lviv (Ukraine) and Jan Kochanowski University in Kielce (Poland)}
\email{t.o.banakh@gmail.com}
\address{J.Mioduszewski: University of Silesia in Katowice (Poland)}
\email{miodusze@math.us.edu.pl}
\address{S.Turek: Cardinal Stefan Wyszy\'nski University in Warsaw (Poland)}
\email{s.turek@uksw.edu.pl}
\subjclass[2010]{Primary: 54D05; Secondary: 11A41.}
\dedicatory{To the memory of Bohuslav Balcar (1943--2017)}
\begin{document}
\begin{abstract} The {\em Golomb space} $\IN_\tau$ is the set $\IN$ of positive integers endowed with the topology $\tau$ generated by the base consisting of arithmetic progressions $\{a+bn:n\ge 0\}$ with coprime $a,b$. We prove that the Golomb space $\IN_\tau$ has continuum many continuous self-maps, contains a countable disjoint family of infinite closed connected subsets, the set $\Pi$ of prime numbers is a dense metrizable subspace of $\IN_\tau$, and each homeomorphism $h$ of $\IN_\tau$ has the following properties: $h(1)=1$, $h(\Pi)=\Pi$ and $\Pi_{h(x)}=h(\Pi_x)$ for all $x\in\IN$. Here by $\Pi_x$ we denote the set of prime divisors of $x$.
\end{abstract}

\maketitle

In the AMS Meeting announcement \cite{Brown} M.~Brown introduced an amusing topology $\tau$ on the set $\IN$ of positive integers turning it into a connected Hausdorff space. The topology $\tau$ is generated by the base consisting of arithmetic progressions $a+b\IN_0:=\{a+bn:n\in\IN_0\}$ with coprime parameters $a,b\in\IN$. Here by $\IN_0=\{0\}\cup\IN$ we denote the set of non-negative integer numbers.

In \cite{SS} the topology $\tau$ is called the {\em relatively prime integer topology}. This topology was popularized by Solomon Golomb \cite{Golomb59}, \cite{Golomb61} who observed that the classical Dirichlet theorem on primes in arithmetic progressions is equivalent to the density of the set $\Pi$ of prime numbers in the topological space $(\IN,\tau)$. As a by-product of such popularization efforts, the topological space $\IN_\tau:=(\IN,\tau)$ is known in General Topology as the {\em Golomb space}, see \cite{Szcz}, \cite{Szcz13}.

The problem of studying the topological structure of the Golomb space was posed to the first author (Banakh) by the third author (Turek) in 2006. In its turn, Turek learned about this problem from the second  author (Mioduszewski) who listened to the lecture of Solomon Golomb on the first Toposym in 1961.

In this paper we study continuous self-maps and homeomorphisms of the Golomb space. In particular, we prove that the Golomb space has continuum many continuous self-maps and is not topologically homogeneous.

%\begin{theorem} The set $\Pi$ of prime numbers is a dense metrizable subspace of the Golomb space $\IN_\tau$.
%\end{theorem}

%For any number $x\in\IN$ let $\Pi_x$ be the set of prime divisors of $x$.

%\begin{theorem}\label{mainH} Any homeomorphism $h:\IN_\tau\to\IN_\tau$ of the Golomb space has the following properties:
%\begin{enumerate}
%\item $h(1)=1$;
%\item $h(\Pi)=\Pi$;
%\item $\Pi_{h(x)}=h(\Pi_x)$ for every $x\in \IN$.
%\end{enumerate}
%\end{theorem}

%Theorem~\ref{mainH} implies that the Golomb space is not topologically homogeneous. This answer a problem \cite{Ban}, posed by the first author on Mathoverflow.

%\begin{problem} Is the Golomb space rigid?
%\end{problem}

%We recall that a topological space $X$ is {\em rigid} if its homeomorphism group is trivial.

\section{Preliminaries and notations}

First let us fix notation. For a point $x\in\IN$ by $\tau_x=\{U\in\tau\colon x\in U\}$ we denote the family of open neighborhoods of $x$ in the topology $\tau$ of the Golomb space $\IN_\tau$.

For two numbers $x,y$ by $\gcd(x,y)$ we denote their greatest common divisor, and by $x\dag y$ the greatest divisor of $x$, which is coprime with $y$.

By $\Pi$ we denote the set of prime numbers. For a number $x\in\IN$ by $\Pi_x$ we denote the set of all prime divisors of $x$. Two numbers $x,y\in\IN$ are {\em coprime} iff $\Pi_x\cap\Pi_y=\emptyset$ (which is equivalent to saying that $\gcd(x,y)=1$).

A number $q\in\IN$ is called {\em square-free} if it is not divided by the square $p^2$ of any prime number $p$.

For a number $x\in\IN$ and a prime number $p$ let $l_p(x)$ be the largest integer number such that $p^{l_p(x)}$ divides $x$. The function $l_p(x)$ plays the role of logarithm with base $p$. A number $x$ is square-free if and only if $l_p(x)\le 1$ for any prime number $p$.

A function $f:X\to Y$ is called {\em finite-to-one} if for each $y\in Y$ the preimage $f^{-1}(y)$ is finite.

A family $\F$ of subsets of a set $X$ is called a {\em filter} if
\begin{itemize}
\item $\emptyset\notin\F$;
\item for any $A,B\in\F$ their intersection $A\cap B\in\F$;
\item for any sets $F\subset E\subset X$ the inclusion $F\in\F$ implies $E\in\F$.
\end{itemize}

In the subsequent proofs we shall exploit the  following two known results of Number Theory. The first one is a general version of the Chinese Remainder Theorem, which can be found in \cite[3.12]{J}.

\begin{theorem}[Chinese Remainder Theorem]\label{Chinese} For any numbers $a_1,\dots,a_n\in\IZ$ and $b_1,\dots,b_n\in\IN$ the following conditions are equivalent:
\begin{enumerate}
\item the intersection $\bigcap_{i=1}^n(a_i+b_i\IN)$ is not empty;
\item the intersection $\bigcap_{i=1}^n(a_i+b_i\IN)$ contains an infinite arithmetic progression;
%\item the intersection $\bigcap_{i=1}^n(a_i+b_i\IN)$ is infinite;
\item for any $i,j$ the number $a_i-a_j$ is divisible by $\gcd(b_i,b_j)$.
\end{enumerate}
\end{theorem}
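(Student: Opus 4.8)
The plan is to establish the cycle of implications $(2)\Rightarrow(1)\Rightarrow(3)\Rightarrow(2)$. The implication $(2)\Rightarrow(1)$ is immediate, since an infinite arithmetic progression is a nonempty set. For $(1)\Rightarrow(3)$, I would pick any $x$ in the intersection $\bigcap_{i=1}^n(a_i+b_i\IN)$; then $x\equiv a_i\pmod{b_i}$ for every $i$, so for any $i,j$ both $a_i$ and $a_j$ are congruent to $x$ modulo $\gcd(b_i,b_j)$, whence $\gcd(b_i,b_j)$ divides $a_i-a_j$.

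The substantive implication is $(3)\Rightarrow(2)$, which I would prove by induction on $n$. The case $n=1$ is trivial, as $a_1+b_1\IN$ is itself an infinite arithmetic progression. For the inductive step, set $d=\mathrm{lcm}(b_1,\dots,b_{n-1})$; applying the induction hypothesis to the first $n-1$ progressions (whose parameters still satisfy condition (3)) produces a point $c$ with $\bigcap_{i=1}^{n-1}(a_i+b_i\IN)\supseteq c+d\IN$ and $c\equiv a_i\pmod{b_i}$ for $i<n$. It then remains to intersect $c+d\IN$ with $a_n+b_n\IN$, that is, to solve the single congruence $c+dy\equiv a_n\pmod{b_n}$ in the unknown $y$. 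Such a $y$ exists if and only if $\gcd(d,b_n)$ divides $a_n-c$, and when it does, the solution set of $y$ is itself an infinite arithmetic progression, so substituting back yields an infinite arithmetic progression with common difference $\mathrm{lcm}(d,b_n)=\mathrm{lcm}(b_1,\dots,b_n)$ sitting inside the full intersection.

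Thus the crux is the divisibility claim $\gcd(d,b_n)\mid(a_n-c)$, which I expect to be the main obstacle and which I would verify prime by prime using the functions $l_p$. Fix a prime $p$ and let $m=\max_{i<n}l_p(b_i)$, so that $l_p(\gcd(d,b_n))=\min(m,l_p(b_n))$; choose $k<n$ with $l_p(b_k)=m$. From $c\equiv a_k\pmod{b_k}$ I get $p^{\min(m,l_p(b_n))}\mid(c-a_k)$, while condition (3) gives $\gcd(b_k,b_n)\mid(a_k-a_n)$ with $l_p(\gcd(b_k,b_n))=\min(m,l_p(b_n))$; adding these two divisibilities yields $p^{\min(m,l_p(b_n))}\mid(c-a_n)$. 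Since this holds for every prime $p$, we conclude $\gcd(d,b_n)\mid(a_n-c)$, completing the induction. A final cosmetic remark: because the paper's $\IN$ consists of positive integers, I would pass to a sufficiently late tail of the resulting progression $x_0+\mathrm{lcm}(b_1,\dots,b_n)\IN$ to ensure every term genuinely lies in each $a_i+b_i\IN$; this tail is still an infinite arithmetic progression, so condition (2) holds.
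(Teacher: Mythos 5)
Your proposal is correct and complete. Note that the paper does not prove this statement at all: it is quoted as a known result of number theory with a reference to Jones--Jones \cite[3.12]{J}, so there is no in-paper argument to compare yours against. The two easy implications are handled correctly, and the crux of $(3)\Rightarrow(2)$ --- the inductive step reducing to the solvability of $c+dy\equiv a_n \pmod{b_n}$, and the prime-by-prime verification that $\gcd(d,b_n)$ divides $a_n-c$ via $l_p(\gcd(d,b_n))=\min\bigl(\max_{i<n}l_p(b_i),\,l_p(b_n)\bigr)$ and condition $(3)$ applied to a maximizing index $k$ --- is sound. Two small points worth making explicit if you write this up: the induction hypothesis should be stated in the strengthened form that the intersection contains a progression of common difference exactly $\mathrm{lcm}(b_1,\dots,b_{n-1})$ whose terms are congruent to $a_i$ modulo $b_i$ for all $i<n$ (which is what your construction actually produces and what the step requires); and your closing remark about passing to a tail is genuinely needed here, since the paper's $a_i+b_i\IN$ is a tail of the residue class rather than the full class, so membership requires both the congruence and a size condition.
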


The second classical result is not elementary and is due to Dirichlet \cite[S.VI]{Dirichlet}, see also \cite[Ch.7]{Ap}.

\begin{theorem}[Dirichlet]\label{Dirichlet} Each arithmetic progression $a+b\IN$ with $\gcd(a,b)=1$ contains a prime number. \end{theorem}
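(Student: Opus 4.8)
The plan is to prove the stronger statement that each arithmetic progression $a+b\IN$ with $\gcd(a,b)=1$ contains infinitely many primes, following Dirichlet's analytic method via characters and $L$-functions. First I would introduce the multiplicative group $G=(\IZ/b\IZ)^\times$ of units modulo $b$, of order $\varphi(b)$, together with the Dirichlet characters modulo $b$, i.e. the group homomorphisms $\chi\colon G\to\mathbb C^\times$ extended to all of $\IN$ by setting $\chi(n)=0$ whenever $\gcd(n,b)>1$. The decisive elementary input is the orthogonality relation $\frac1{\varphi(b)}\sum_\chi\overline{\chi(a)}\,\chi(n)$, which equals $1$ when $n\equiv a\pmod b$ and $0$ otherwise; this lets one sieve out a single residue class as a weighted average over all characters.

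Next I would attach to each character its Dirichlet $L$-function $L(s,\chi)=\sum_{n=1}^\infty\chi(n)n^{-s}$, convergent and nonzero for $\mathrm{Re}(s)>1$, where the complete multiplicativity of $\chi$ yields the Euler product $L(s,\chi)=\prod_{p\in\Pi}\bigl(1-\chi(p)p^{-s}\bigr)^{-1}$. Taking a logarithm and discarding the rapidly converging contribution of the prime powers $p^k$ with $k\ge2$, one obtains $\log L(s,\chi)=\sum_{p\in\Pi}\chi(p)p^{-s}+O(1)$ as $s\to1^+$. Multiplying by $\overline{\chi(a)}$, summing over all $\chi$, and applying the orthogonality relation then gives
$$\sum_{\substack{p\in\Pi\\ p\equiv a\,(b)}}p^{-s}=\frac1{\varphi(b)}\sum_\chi\overline{\chi(a)}\,\log L(s,\chi)+O(1)\qquad(s\to1^+).$$
It therefore suffices to show that the right-hand side is unbounded as $s\to1^+$, for then the left-hand side diverges and the residue class must contain infinitely many primes.

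To analyze the right-hand side I would separate the principal character $\chi_0$ from the rest. For $\chi_0$ the function $L(s,\chi_0)$ differs from the Riemann zeta function only by the finitely many Euler factors at primes dividing $b$, so it inherits a simple pole at $s=1$ and $\log L(s,\chi_0)\to+\infty$. For every non-principal $\chi$ the series $\sum\chi(n)n^{-s}$ converges conditionally at $s=1$ (by Abel summation, using boundedness of the partial character sums), so $L(s,\chi)$ extends continuously to $s=1$; hence $\log L(s,\chi)$ stays bounded as $s\to1^+$ precisely when $L(1,\chi)\ne0$. Granting this nonvanishing, the principal term dominates and forces divergence, completing the proof.

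The hard part, and the reason the theorem is not elementary, is exactly the nonvanishing $L(1,\chi)\ne0$ for non-principal $\chi$. I would handle the complex characters by observing that the product $\prod_\chi L(s,\chi)$ has nonnegative Dirichlet coefficients, so it stays $\ge1$ for real $s>1$, while $\chi_0$ contributes a single simple pole at $s=1$; a complex $\chi$ vanishing at $s=1$ would force its conjugate $\overline\chi$ to vanish there as well, producing a double zero that drives the bounded product to $0$, a contradiction. The genuinely delicate case is a real non-principal character, for which this pairing argument fails and a single zero could cancel the pole; here I would invoke a separate classical argument (Landau's theorem on Dirichlet series with nonnegative coefficients, or a direct lower bound for a suitable partial sum) to exclude $L(1,\chi)=0$. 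Since this is a standard result of analytic number theory, in the paper it is quoted rather than reproved, with references to Dirichlet and Apostol.
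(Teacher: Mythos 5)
The paper does not prove this statement: it is quoted as a classical, non-elementary result with references to Dirichlet and to Apostol's book, and your outline is precisely the standard character/$L$-function proof contained in those references, correctly organized (orthogonality, Euler product, isolation of the principal character, and nonvanishing of $L(1,\chi)$ as the crux). Your sketch is accurate as far as it goes, with the only substantive omission being the one you yourself flag --- the nonvanishing of $L(1,\chi)$ for real non-principal $\chi$ --- which is exactly the step that makes the theorem non-elementary and which the paper likewise delegates to the literature.
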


\section{Superconnectedness of the Golomb space}

We define a topological space $X$ to be {\em superconnected} if for any non-empty open sets $U_1,\dots,U_n\subset X$ the intersection of their closures $\overline{U_1}\cap\dots\cap \overline{U_n}$ is not empty.

The proof of the following proposition is straightforward and is left to the reader as an exercise.

\begin{proposition}\label{supcont} \begin{enumerate}
\item Each superconnected space is connected.
\item The continuous image of a superconnected space is superconnected.
\end{enumerate}
\end{proposition}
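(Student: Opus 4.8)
The plan is to treat the two statements separately, in both cases directly from the definition of superconnectedness, specialized to $n=2$ for the first and to arbitrary $n$ for the second.

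For statement (1), I would argue by contraposition. If a space $X$ is disconnected, then $X=U\cup V$ for some non-empty disjoint open sets $U,V$. Since $U=X\setminus V$ and $V=X\setminus U$ are complements of open sets, both $U$ and $V$ are also closed, so $\overline U=U$ and $\overline V=V$. Applying the definition of superconnectedness with $n=2$, $U_1=U$ and $U_2=V$ would force the empty set $\overline{U}\cap\overline{V}=U\cap V$ to be non-empty --- a contradiction. Hence every superconnected space is connected. (Taking $n=1$ with any non-empty open $U_1$ also shows that a superconnected space is non-empty, so that "connected" is being applied in its usual sense.)

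For statement (2), let $f\colon X\to Y$ be a continuous surjection with $X$ superconnected; replacing $Y$ by $f(X)$ shows this special case suffices for an arbitrary continuous image. Given non-empty open sets $V_1,\dots,V_n\subseteq Y$, I would pull them back to $U_i:=f^{-1}(V_i)$, which are open by continuity and non-empty by surjectivity. Superconnectedness of $X$ then yields a point $x\in\bigcap_{i=1}^n\overline{U_i}$. The key step is the standard closure estimate $f(\overline A)\subseteq\overline{f(A)}$, valid for any continuous $f$ and any $A\subseteq X$: applied to $A=U_i$ it gives $f(x)\in f(\overline{U_i})\subseteq\overline{f(U_i)}=\overline{f(f^{-1}(V_i))}\subseteq\overline{V_i}$ for every $i$. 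Therefore $f(x)\in\bigcap_{i=1}^n\overline{V_i}$, which proves that $Y$ is superconnected.

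There is no serious obstacle here; the only point requiring the slightest care is taking closures in the correct space and invoking $f(\overline A)\subseteq\overline{f(A)}$ rather than trying to commute preimages with closures, which would fail. Reducing to a surjection onto $f(X)$ at the outset keeps both the non-emptiness of the $U_i$ and the identity $f(f^{-1}(V_i))=V_i$ clean.
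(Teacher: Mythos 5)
Your proof is correct. The paper explicitly leaves this proposition as an exercise, and your argument is exactly the standard one the authors intend: for (1), a disconnection $X=U\cup V$ into non-empty disjoint clopen sets gives $\overline{U}\cap\overline{V}=U\cap V=\emptyset$, contradicting superconnectedness with $n=2$; for (2), pulling back to $U_i=f^{-1}(V_i)$ and pushing a point of $\bigcap_i\overline{U_i}$ forward via $f(\overline{A})\subseteq\overline{f(A)}$ is precisely the right mechanism, and your care in corestricting to $f(X)$ so that $f(f^{-1}(V_i))=V_i$ and the closures live in the correct subspace is well placed.
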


 In this section we present some examples of superconnected subspaces of the Golomb space $\IN_\tau$.
But first describe the closures of  basic open sets in $\IN_\tau$. In fact, we will show a somewhat more general property.

\begin{lemma}\label{basic} For any $a,b\in\IN$ 
	%basic open set $a+b\IN_0$ in $\IN_\tau$ its closure
$$\overline{a+b\IN_0}=\IN\cap\bigcap_{p\in\Pi_b}\big(p\IN\cup (a+p^{l_p(b)}\IZ)\big).$$
\end{lemma}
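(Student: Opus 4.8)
The plan is to compute the closure directly from the definition of $\tau$. A point $x\in\IN$ belongs to $\overline{a+b\IN_0}$ precisely when every basic neighborhood of $x$ meets $a+b\IN_0$, so first I would pin down a convenient neighborhood base. The set $x+c\IN_0$ contains $x$ and is a basic open set exactly when $\gcd(x,c)=1$; moreover, if $x\in x'+c\IN_0$ with $\gcd(x',c)=1$, then $\gcd(x,c)=1$ and $x+c\IN_0\subseteq x'+c\IN_0$, so the sets $\{x+c\IN_0:\gcd(x,c)=1\}$ form a neighborhood base at $x$ in $\IN_\tau$. Consequently $x\in\overline{a+b\IN_0}$ if and only if $(x+c\IN_0)\cap(a+b\IN_0)\ne\emptyset$ for every $c$ coprime to $x$. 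Applying the Chinese Remainder Theorem~\ref{Chinese} to the two progressions (both of which are infinite and unbounded, so their $\IN_0$-intersection is nonempty iff the congruence condition holds), this intersection is nonempty exactly when $\gcd(b,c)$ divides $x-a$. Thus the closure condition reads: $\gcd(b,c)\mid(x-a)$ for every $c$ with $\gcd(x,c)=1$.

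The heart of the argument, and the step I expect to require the most care, is to localize this universally quantified divisibility condition at the primes dividing $b$. I claim it is equivalent to the statement that for every $p\in\Pi_b$ with $p\nmid x$ one has $p^{l_p(b)}\mid(x-a)$. The forward implication hinges on a good choice of witness: given such a prime $p$, the number $c=p^{l_p(b)}$ is coprime to $x$ and satisfies $\gcd(b,c)=p^{l_p(b)}$, so the hypothesis forces $p^{l_p(b)}\mid(x-a)$. For the converse, I would write $\gcd(b,c)=\prod_p p^{\min(l_p(b),l_p(c))}$ for an arbitrary $c$ coprime to $x$; every prime $p$ contributing a nontrivial factor here divides both $b$ and $c$, hence does not divide $x$, so the hypothesis gives $p^{\min(l_p(b),l_p(c))}\mid p^{l_p(b)}\mid(x-a)$, and assembling these coprime prime powers yields $\gcd(b,c)\mid(x-a)$.

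Finally I would translate the localized condition into the stated set-theoretic form. For a fixed $p\in\Pi_b$, the disjunction ``$p\mid x$ or $p^{l_p(b)}\mid(x-a)$'' says exactly that $x\in p\IN\cup\big(a+p^{l_p(b)}\IZ\big)$, and ``$p\nmid x$ implies $p^{l_p(b)}\mid(x-a)$'' is merely the contrapositive of that same disjunction. Intersecting over all $p\in\Pi_b$ and restricting to $\IN$ then reproduces $\IN\cap\bigcap_{p\in\Pi_b}\big(p\IN\cup(a+p^{l_p(b)}\IZ)\big)$, which completes the proof. The only subtlety worth double-checking along the way is the passage between $\IN_0$-progressions, $\IN$-progressions, and residue classes in $\IZ$ when invoking Theorem~\ref{Chinese}, but since all progressions involved are unbounded this causes no difficulty.
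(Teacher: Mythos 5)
Your proof is correct and follows essentially the same route as the paper's: the key witness neighborhood $x+p^{l_p(b)}\IN_0$ for the forward inclusion and the localization of $\gcd(b,c)$ at the primes of $\Pi_b\setminus\Pi_x$ for the reverse inclusion both appear in the paper's argument, which merely uses B\'ezout in place of your second appeal to the Chinese Remainder Theorem. Your reorganization as a single chain of equivalences, with the neighborhood base at $x$ made explicit, is a fine (and slightly more self-contained) presentation of the same proof.
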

% {\color{red}bla}
\begin{proof}
First we prove that $\overline{a+b\mathbb N_0}\subset p\IN\cup(a+p^k\IZ)$ for every $p\in\Pi_b$ and $k=l_p(b)$.
Take any point $x\in\overline{a+b\IN_0}$ and assume that $x\notin p\IN$. Then $x+p^k\IN_0$ is a neighborhood of $x$ and hence the intersection $(x+p^k\IN_0)\cap(a+b\IN_0)$ is not empty. Then there exist $u,v\in\IN_0$ such that $x+p^ku=a+bv$. Consequently, $x-a=bv-p^ku\in p^k\IZ$ and $x\in a+p^k\IZ$ 

Next, take any point $x\in\IN\cap\bigcap_{p\in\Pi_b}\big(p\IN\cup(a+p^{l_p(b)}\IZ)\big)$. Given any basic neighborhood $x+d\IN_0$ of $x$, we should prove that $(x+d\IN_0)\cap(a+b\IN_0)\ne\emptyset$.

Our assumption guarantees that $x\in \bigcap_{p\in\Pi_b\setminus \Pi_x}(a+p^{l_p(b)}\IZ)=a+q\IZ$ where $q=\prod_{p\in \Pi_b\setminus\Pi_x}p^{l_p(b)}$. Since the numbers $x$ and $d$ are coprime, the greatest common divisor of $b$ and $d$ divides the number $q$. Since $x-a\in q\IZ$, the Euclides algorithm yields two numbers $u,v\in\IZ$ such that $x-a=bu-dv$, which implies that $(x+d\IZ)\cap(a+b\IZ)\ne\emptyset$ and so $(x+d\IN_0)\cap(a+b\IN_0)\ne\emptyset$.
\end{proof}

The next proposition  generalizes the connectedness of subspaces of the form $(1+p\IN_0)\cup p\IN$, where $p\in\Pi$ (proved in~\cite[Lemma 3.2]{Szcz13}).

\begin{proposition}\label{superconnect} For any sequences $\{a_i\}_{i\in\w}\subset\IN_0$ and $\{b_i\}_{i\in\w}\subset\IN$ with $a_0=0$ the subspace $$X=\bigcup_{i\in \w}(a_i+b_i\IN_0)$$ of $\IN_\tau$  is superconnected.
	%For any non-empty open set $W\subset \IN_\tau$ and any number $c\in\IN$ the subspace $X:=W\cup c\IN$ of $\IN_\tau$ is superconnected.
\end{proposition}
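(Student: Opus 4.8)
The plan is, given non-empty open sets $U_1,\dots,U_n$ of the subspace $X$, to produce a single point lying in all of the closures $\overline{U_1},\dots,\overline{U_n}$ computed in $X$; this immediately yields superconnectedness. The hypothesis $a_0=0$ is what makes this feasible: it guarantees $a_0+b_0\IN_0=b_0\IN_0$, so that $X$ contains the whole set $b_0\IN$ of positive multiples of $b_0$. This set will serve as a reservoir of points that can be captured by closures.

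First I would replace each $U_j$ by a basic progression. Since $U_j$ is non-empty and open in $X$, I can choose $x_j\in U_j$ together with a basic set $W_j=c_j+d_j\IN_0$ of $\IN_\tau$ with $\gcd(c_j,d_j)=1$, $x_j\in W_j$, and $W_j\cap X\subseteq U_j$. As $x_j\in X$, there is an index $i(j)$ with $x_j\in a_{i(j)}+b_{i(j)}\IN_0$, so the intersection $W_j\cap(a_{i(j)}+b_{i(j)}\IN_0)$ is non-empty; by the Chinese Remainder Theorem (Theorem~\ref{Chinese}) it is an arithmetic progression $e_j+f_j\IN_0$ with $f_j=\mathrm{lcm}(d_j,b_{i(j)})$, and this progression is contained in $W_j\cap X\subseteq U_j$. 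Note that $\Pi_{f_j}=\Pi_{d_j}\cup\Pi_{b_{i(j)}}$.

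The second step reads off the closure from Lemma~\ref{basic}. Let $r_j=\prod_{p\in\Pi_{f_j}}p$ be the radical of $f_j$. If $x$ is any positive multiple of $r_j$, then $x\in p\IN$ for every $p\in\Pi_{f_j}$, so the formula of Lemma~\ref{basic} places $x$ in the closure of $e_j+f_j\IN_0$ taken in $\IN_\tau$. Using monotonicity of closure together with $e_j+f_j\IN_0\subseteq U_j$ and the subspace identity $\overline{U_j}^{X}=\overline{U_j}^{\IN_\tau}\cap X$, I conclude that $\overline{U_j}^{X}$ contains every multiple of $r_j$ that lies in $X$.

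Finally I would exhibit the common point. Set $z=\mathrm{lcm}(b_0,r_1,\dots,r_n)$. Then $z$ is a positive multiple of $b_0$, hence $z\in b_0\IN\subseteq X$ --- this is exactly where $a_0=0$ enters --- and $z$ is a multiple of each $r_j$, so by the previous step $z\in\overline{U_j}^{X}$ for every $j$. Thus $\overline{U_1}\cap\dots\cap\overline{U_n}$ contains $z$ and is non-empty, proving that $X$ is superconnected. I expect the only delicate part to be the bookkeeping with the two closures (in $X$ versus in $\IN_\tau$) and checking that the single number $z$ simultaneously lands in $X$ and is captured by each closure; the number-theoretic content is light, since Lemma~\ref{basic} has already done the work of computing closures.
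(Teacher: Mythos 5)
Your proposal is correct and follows essentially the same route as the paper's proof: shrink each $U_j$ to a basic progression, intersect it with one of the constituent progressions of $X$ via the Chinese Remainder Theorem to get $e_j+f_j\IN_0\subseteq U_j$, read off from Lemma~\ref{basic} that $\overline{U_j}$ contains all common multiples of the primes in $\Pi_{f_j}$, and use $a_0=0$ to place a common multiple (your explicit $z=\mathrm{lcm}(b_0,r_1,\dots,r_n)$, the paper's $b_0\IN_0\cap\bigcap_j\bigcap_{p\in\Pi_{f_j}}p\IN$) inside $X$ and all the closures. Your version is merely more explicit about the subspace-versus-ambient closure bookkeeping, which the paper leaves implicit.
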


\begin{proof}   
	Given non-empty open sets $U_1,\dots,U_n\subset X$, we should prove that $X\cap \overline{U_1}\cap\dots\cap\overline{U_n}\ne\emptyset$.
	We can assume that each set $U_j$ is of the form
	$X\cap(c_j+d_j\IN)$ for some coprime numbers $c_j,d_j$. By the Chinese Remainder Theorem~\ref{Chinese}, $U_j$ contains an arithmetic progression $e_j+f_j\IN$ and by Lemma~\ref{basic}, $\overline{U_j}$ contains $\bigcap_{p\in\Pi_{f_j}}p\IN$. Since $a_{0}=0$ we have 
	$$\emptyset\ne b_0\IN_0\cap\bigcap_{j=1}^n\bigcap_{p\in \Pi_{f_j}}p\IN\subset X\cap \overline{U_1}\cap\dots\cap\overline{U_n}.$$
%First we consider the case of $c>1$. Using the Chinese Remainder Theorem, we can show that the set $c\IN$ has empty interior in $\IN_\tau$. Then for every $i\le n$ the set $W\cap c\IN$ has empty interior and hence we can choose a point $a_i\in W\cap U_i\setminus c\IN$. Since the set $W$ is open in $\IN_\tau$, there exists a number $b_i\in\IN$ such that $\gcd(b_i,a_i)=1$ and $a_i+b_i\IN_0\subset U_i\cap W$. By Lemma~\ref{basic}, the closure $\overline{a_i+b_i\IN_0}$ contains the arithmetic progression $\bigcap_{p\in\Pi_{b_i}}p\IN$. It is clear that
%$$\emptyset\ne c\IN\cap\bigcap_{i=1}^n\bigcap_{p\in\Pi_{b_i}}p\IN\subset X\cap\bigcap_{i=1}\overline{a_i+b_i\IN_0}\subset\bigcap_{i=1}^n(X\cap \overline{U_i}),$$witnessing that the space $X$ is superconnected.
%If $c=1$, then for any $c'\in\IN\setminus\{1\}$ and $W'=\IN$, the subspace $X=W\cup c\IN=\IN=W'\cup c'\IN$ of $\IN_\tau$ is superconnected by the preceding case.
\end{proof}

\begin{corollary} The Golomb space is superconnected.
\end{corollary}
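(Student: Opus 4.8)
The plan is to deduce the corollary directly from Proposition~\ref{superconnect} by exhibiting the whole Golomb space $\IN_\tau$ as one of the superconnected subspaces covered by that proposition. Concretely, I want to express $\IN$ as a union $\bigcup_{i\in\w}(a_i+b_i\IN_0)$ with $a_0=0$, so that the hypotheses of Proposition~\ref{superconnect} are satisfied and its conclusion applies verbatim to $X=\IN_\tau$.

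First I would take the constant sequences $a_i=0$ and $b_i=1$ for all $i\in\w$. The only restriction imposed by the proposition is $a_0=0$, which clearly holds. With this choice each progression equals $a_i+b_i\IN_0=0+1\cdot\IN_0=\IN_0$, so the union is $X=\IN_0$; viewed as a subspace of $\IN_\tau$, whose underlying set is $\IN$, this $X$ is precisely $\IN$. Thus $X=\IN_\tau$, and Proposition~\ref{superconnect} immediately yields that $\IN_\tau$ is superconnected.

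I do not anticipate any genuine obstacle: the corollary is a direct specialization of the preceding proposition. The only point deserving a moment's care is the bookkeeping that a progression with $a_0=0$ and $b_0=1$ recovers all of $\IN$ once the ambient set $\IN$ is taken into account, the value $0$ simply lying outside $\IN$. No number-theoretic input beyond what already appears in the proof of Proposition~\ref{superconnect} is required.
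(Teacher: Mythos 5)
Your proposal is correct and is exactly the specialization the paper intends: Proposition~\ref{superconnect} applied with $a_0=0$, $b_0=1$ (so that $b_0\IN_0$, intersected with the underlying set $\IN$, is all of $\IN_\tau$). The remark about $0$ lying outside $\IN$ is the right bit of bookkeeping, and nothing further is needed.
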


\section{Metrizability of the set of prime numbers in the Golomb space}

The main result of this section is the following theorem.

\begin{theorem} The set $\Pi$ of prime numbers is a dense metrizable subspace of the Golomb space $\IN_\tau$. Moreover, $\Pi$ is homeomorphic to the space $\IQ$ of rational numbers.
\end{theorem}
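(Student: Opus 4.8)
The plan is to combine Dirichlet's theorem (Theorem~\ref{Dirichlet}) with Sierpi\'nski's classical characterization of the rationals: \emph{every nonempty countable metrizable space without isolated points is homeomorphic to $\IQ$}. Since $\Pi\subset\IN$ is countable, it suffices to verify three things: that $\Pi$ is dense in $\IN_\tau$, that $\Pi$ has no isolated points, and that the subspace $\Pi$ is metrizable. Density and the absence of isolated points are immediate from Dirichlet's theorem. Every nonempty basic open set $a+b\IN_0$ has $\gcd(a,b)=1$, hence meets $\Pi$, so $\Pi$ is dense. For a prime $q_0$ and a basic neighborhood $(a+b\IN_0)\cap\Pi$, applying Theorem~\ref{Dirichlet} to the progressions $(a+bk)+b\IN$ for $k=0,1,2,\dots$ produces infinitely many primes in $a+b\IN_0$, so the neighborhood contains primes other than $q_0$; thus $q_0$ is not isolated.

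For metrizability I would show that $\Pi$ is a Hausdorff, second-countable, zero-dimensional space and invoke the Urysohn metrization theorem. Hausdorffness and second countability are inherited from $\IN_\tau$, whose base of arithmetic progressions is countable, so the substance is producing a base of clopen subsets of $\Pi$. Note that a basic neighborhood of $q_0$, viewed as a subset of $\Pi$, equals $T_b=\{q\in\Pi:q\equiv q_0\ (\mathrm{mod}\ b)\}$ (with $\gcd(q_0,b)=1$), which is open in $\Pi$; I will find a suitable multiple $m$ of $b$ for which $T_m\subseteq T_b$ is clopen and contains $q_0$. Using the closure formula of Lemma~\ref{basic}, a prime $q$ lies in $\overline{T_m}$ if and only if for every $p\in\Pi_m$ either $q=p$ or $q\equiv q_0\pmod{p^{l_p(m)}}$. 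Consequently the only candidates for points of $\overline{T_m}\setminus T_m$ are the primes $p\mid m$, and such a $p$ belongs to $\overline{T_m}$ exactly when $p\equiv q_0\pmod{m/p^{l_p(m)}}$.

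The key step is therefore to enlarge $m$ so that \emph{no} prime dividing $m$ satisfies this congruence; then $\overline{T_m}=T_m$ and $T_m$ is clopen. Starting from the given $b$, I would adjoin two auxiliary primes and set $m=b\cdot p^*\cdot p^{**}$, chosen so that: (i) $p^*,p^{**}\notin\Pi_b\cup\{q_0\}$; (ii) $p\not\equiv q_0\pmod{p^*}$ for every $p\in\Pi_b$ (possible because $p-q_0\ne0$ has only finitely many prime divisors); (iii) $p^{**}\not\equiv q_0\pmod{p^*}$; and (iv) $p^*\not\equiv q_0\pmod{p^{**}}$. Condition (ii) rules out every prime dividing $b$ as a boundary point, while (iv) and (iii) rule out $p^*$ and $p^{**}$ respectively; each required non-congruence is achievable by Dirichlet's theorem, since it only forbids one residue class or finitely many prime divisors. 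The two auxiliary primes are genuinely necessary, because each one's obstruction can be removed only by a \emph{different} prime appearing in the modulus (a single auxiliary prime fails already when $b=2$). This produces a neighborhood base at $q_0$ consisting of clopen sets, so $\Pi$ is zero-dimensional and hence metrizable, and Sierpi\'nski's theorem gives $\Pi\cong\IQ$.

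I expect the clopen-base construction to be the main obstacle, as it is the only non-formal part: it requires the exact closure description from Lemma~\ref{basic} together with the careful choice of auxiliary moduli that prevents the primes dividing the modulus from becoming boundary points. The remaining ingredients (density, crowdedness, and the reduction via Sierpi\'nski's theorem) are comparatively routine once this zero-dimensionality is in hand.
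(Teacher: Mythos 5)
Your argument is correct and shares the overall skeleton of the paper's proof: Dirichlet's Theorem~\ref{Dirichlet} gives density and the absence of isolated points, Lemma~\ref{basic} is used to control the closure of a shrunken basic neighborhood of a prime $q_0$, and the Urysohn metrization theorem plus Sierpi\'nski's characterization of $\IQ$ finish the job. The genuine difference lies in the device used to keep the primes dividing the modulus out of the closure. The paper keeps the prime support of the modulus fixed and raises it to a high power: it shrinks $q_0+b\IN_0$ (arranging $|\Pi_b|>1$) to $q_0+b^n\IN_0$, where $n$ is chosen so that $b^n>q_0$ and $r^n\nmid p-q_0$ for all $p,r\in\Pi_b$; since each $p-q_0$ is a fixed nonzero integer, such an $n$ exists for free, with no further appeal to Dirichlet. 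You instead enlarge the prime support, multiplying $b$ by two auxiliary primes $p^*,p^{**}$ chosen via Dirichlet (and the finiteness of the set of prime divisors of the numbers $p-q_0$, $p\in\Pi_b$) to break the congruences $p\equiv q_0$ modulo the complementary part of the modulus. Both devices produce a basic neighborhood whose trace on $\Pi$ is closed, hence clopen, in $\Pi$; the paper phrases its conclusion only as regularity of $\Pi$, but its construction yields zero-dimensionality exactly as yours does, so the two routes establish the same strengthened fact, with the power trick being the more economical of the two. One small repair to make in your version: $(a+b\IN_0)\cap\Pi$ is not literally $T_b=\{q\in\Pi:q\equiv q_0\pmod b\}$, since $T_b$ may contain finitely many primes below $a$; to guarantee $T_m\subset a+b\IN_0$ you should additionally take $m>q_0$ (for instance by choosing $p^*$ or $p^{**}$ larger than $q_0$), which costs nothing.
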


\begin{proof} The density of the set $\Pi$ in the Golomb space $\IN_\tau$ follows from Dirichlet's Theorem~\ref{Dirichlet}.

Next, we prove that the subspace $\Pi$ of $\IN_\tau$ is regular.
Given any number $x\in\Pi$ and an open neighborhood $O_x\subset\IN_\tau$ of $x$, we should find a neighborhood $U_x\subset\IN_\tau$ of $x$ such that $\Pi\cap \overline{U_x}\subset O_x$. We lose no generality assuming that $O_x=x+b\IN_0$ for some number $b$ such that $x\notin \Pi_b$ and $|\Pi_b|>1$. Choose $n\in\IN$ so large that $b^n>x$ and for any $p,r\in \Pi_b$ the difference $p-x$ is not divisible by $r^n$. We claim that the neighborhood $U_x:=x+b^n\IN_0$ has the required property: $\Pi\cap\overline{U_x}\subset O_x$.

Indeed, take any number $p\in \Pi\cap\overline{x+b^n\IN_0}$. By Lemma~\ref{basic}, $$\overline{x+b^n\IN_0}=\IN\cap\bigcap_{r\in \Pi_b}\big(r\IN\cup(x+r^{l_r(b^n)}\IZ)\big).$$  If $p\in\Pi_b$, then for any number $r\in\Pi_b\setminus\{p\}$, the choice of $n$ guarantees that $p\notin r\IN\cup(x+r^{l_r(b^n)}\IZ)$ and hence $p\notin \overline{x+b^n\IN_0}$, which is a contradiction. So, $p\notin\Pi_b$. In this case $p\in\bigcap_{r\in\Pi_b}(x+r^{l_r(b^n)}\IZ)=x+b^n\IZ$ and hence $p-x$ is divisible by $b^n$. Taking into account that $b^n>x$, we conclude that $p\ge x$ and hence $p\in x+b^n\IN_0\subset x+b\IN_0\subset O_x$. This completes the proof of the regularity of $\Pi$.

By the Tychonoff--Urysohn Metrization Theorem \cite[4.2.9]{En}, the second-countable regular space $\Pi$ is metrizable. The Dirichlet Theorem~\ref{Dirichlet} implies that the space $\Pi$ has no isolated points. By the Sierpi\'nski Theorem \cite[6.2.A(d)]{En}, $\Pi$ is homeomorphic to $\IQ$ (being a countable metrizable space without isolated points).
\end{proof}

In contrast to the set of prime numbers each basic open subset of the Golomb space is not regular  (but is totally disconnected). We recall that a topological space $X$ is {\em totally disconnected} if for any distinct points $x,y\in X$ there exists a closed-and-open set $U\subset X$ such that $x\in U$ and $y\notin U$.

\begin{proposition} For any coprime numbers $a,b\in\IN$ the subspace $X=a+b\IN_0$ of\/ $\IN_\tau$ is totally disconnected but not regular.
\end{proposition}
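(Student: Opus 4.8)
The plan is to prove the two assertions separately, using in both the closure formula of Lemma~\ref{basic}. Throughout I assume $b\ge 2$, so that $b$ has a prime divisor; this is the substantive case, since for $b=1$ the set $X$ is cofinite and for $a=b=1$ it is the whole (connected) space, where total disconnectedness fails.

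For total disconnectedness, the key observation is that every element of $X=a+b\IN_0$ is coprime to $b$ (because $\gcd(a,b)=1$), so if I fix a prime divisor $p$ of $b$ then $X\cap p\IN=\emptyset$; this will suppress the ``leak'' term $p\IN$ in the closure formula. Given distinct $x,y\in X$, I set $k=l_p(x-y)+1$, so that $p^k\nmid x-y$, i.e. $x\not\equiv y\pmod{p^k}$, and let $c\in\{1,\dots,p^k-1\}$ be the residue of $x$ modulo $p^k$; since $p\nmid x$ we have $\gcd(c,p^k)=1$, so $U:=X\cap(c+p^k\IN_0)=\{z\in X:z\equiv x\pmod{p^k}\}$ is open in $X$ and contains $x$ but not $y$. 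By Lemma~\ref{basic}, $\overline{c+p^k\IN_0}=\IN\cap(p\IN\cup(c+p^k\IZ))$, whence $\overline{U}^{X}\subseteq X\cap(p\IN\cup(c+p^k\IZ))=(X\cap p\IN)\cup U=U$, because $X\cap p\IN=\emptyset$. Thus $U$ is clopen in $X$ and separates $x$ from $y$.

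For non-regularity, I fix a prime $q$ with $q\nmid ab$ and test regularity at the point $a\in X$ with the neighborhood $O:=X\cap(a+q\IN_0)=\{w\in X:w\equiv a\pmod q\}$ (the last equality uses that every element of $X$ is $\ge a$). I will show that $\overline{V}^{X}\not\subseteq O$ for every basic neighborhood $V=X\cap(a+d\IN_0)$ of $a$ with $\gcd(a,d)=1$, which refutes regularity. If $q\nmid d$, then $V=(a+b\IN_0)\cap(a+d\IN_0)=a+\operatorname{lcm}(b,d)\IN_0$ meets every residue class modulo $q$ (as $q\nmid\operatorname{lcm}(b,d)$), so already $V\not\subseteq O$. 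If $q\mid d$, I invoke the Chinese Remainder Theorem~\ref{Chinese} to produce $w\in X$ with $q\mid w$ and $w\equiv a\pmod{p^{l_p(d)}}$ for every prime $p\mid d$ with $p\ne q$; the compatibility conditions hold because $q\nmid b$ and $q\nmid d/q^{l_q(d)}$. Such $w$ satisfies $w\not\equiv a\pmod q$ (as $q\nmid a$), hence $w\notin O$.

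The main obstacle is the remaining point: verifying $w\in\overline{V}^{X}$, i.e. that $w$ is approximated within $V$ and not merely within the larger set $a+d\IN_0$. I will check this directly: an arbitrary basic neighborhood $w+e\IN_0$ of $w$ with $\gcd(w,e)=1$ meets $V$ iff the system $w'\equiv a\pmod{b}$, $w'\equiv a\pmod d$, $w'\equiv w\pmod e$ has a solution, and the Chinese Remainder Theorem~\ref{Chinese} applies because $w\equiv a\pmod b$ gives compatibility of the $b$- and $e$-congruences modulo $\gcd(b,e)$, while $q\mid w$ forces $q\nmid e$, so that $\gcd(d,e)\mid d/q^{l_q(d)}$, a modulus along which $w\equiv a$. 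Once $w\in\overline{V}^{X}\setminus O$ is established in both cases, regularity at $a$ fails, and together with the previous paragraph this shows that $X$ is totally disconnected but not regular.
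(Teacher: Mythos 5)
Your proof is correct and follows essentially the same route as the paper: for non-regularity you pick a prime $q\nmid ab$ and show that the closure of every basic neighborhood of $a$ meets $q\IN$ while $X\cap(a+q\IN_0)$ (which is exactly the paper's set $a+qb\IN_0$) does not, and for total disconnectedness you separate points by congruence classes modulo a power of a prime divisor of $b$, using that $X$ is disjoint from $p\IN$ to get closedness. The only cosmetic differences are that the paper packages the clopen sets as a disjoint open cover modulo $b^n$ rather than computing a single closure, and that you carry out the Chinese Remainder verifications explicitly where the paper cites the displayed inclusion chain from Lemma~\ref{basic}.
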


\begin{proof} First we show that the space $X=a+b\IN_0$ is not regular.
Choose any prime number $q\notin\Pi_b\cup\Pi_a$ and consider the
basic neighborhood $V=a+qb\IN_0\subset a+b\IN_0$ of the point $a$. Each basic neighborhood of $a$ which is  contained in $V$ has form $W=a+qbc\IN_0$ for some number $c\in\IN$, coprime with $a$. By  Lemma~\ref{basic} we have
\begin{multline*}
\overline{W}=\overline{a+qbc\IN_0}\supset\bigcap_{p\in \Pi_{qbc}}\big(p\IN\cup (a+p^{l_p(qbc)}\IN_0)\big)=\\=\bigcap_{p\in \Pi_{qbc}\setminus \Pi_b}\big(p\IN\cup (a+p^{l_p(qbc)}\IN_0)\big)\cap \bigcap_{p\in\Pi_b}\big(p\IN\cup (a+p^{l_p(qbc)}\IN_0)\big)\supset\\
\supset
\bigcap_{p\in \Pi_{qbc}\setminus \Pi_b}p\IN\cap \bigcap_{p\in\Pi_b}(a+p^{l_p(qbc)}\IN_0).\end{multline*}
The set 
$$\bigcap_{p\in \Pi_{qbc}\setminus \Pi_b}p\IN\cap \bigcap_{p\in\Pi_b}(a+p^{l_p(qbc)}\IN_0)$$
is non-empty by the Chinese Remainder Theorem~\ref{Chinese} and is contained in $q\IN\cap(a+b\IN_0)=q\IN\cap X$. However, $V\cap q\IN=(a+qb\IN_0)\cap q\IN=\emptyset$ 
%and hence $$\overline{W}\cap X\supset
%\bigcap_{p\in \Pi_{qbc}\setminus \Pi_b}p\IN\cap %\bigcap_{p\in\Pi_b}(a+p^{l_p(qbc)}\IN_0)\cap\bigcap_{p%\in \Pi_b}(a+p^{l_p(b)}\IN_0)=
%\bigcap_{p\in \Pi_{qbc}\setminus \Pi_b}p\IN\cap %\bigcap_{p\in\Pi_b}(a+p^{l_p(qbc)}\IN_0)\ne\emptyset.$%$However, $(a+qb\IN_0)\cap\bigcap_{p\in %\Pi_{qbc}\setminus \Pi_b}p\IN=\emptyset$ 
because $a$ is not divisible by $q$. So, $\overline{W}\cap X\not\subset V$ and the space $X$ is not regular.

To see that the space $X=a+b\IN_0$ is totally disconnected, take any distinct points $x,y\in a+b\IN_0$ and choose $n\in\IN$ so large that $b^n$ does not divide $x-y$.
Observe that $\mathcal V=\{X\cap(z+b^n\IZ):z\in a+b\IN_0\}$ is a disjoint open cover of $X$, which implies that each set $V\in\mathcal V$ is open-and-closed in $X$. Moreover, since $b^n$ does not divide $x-y$, the points $x,y$ belong to distinct sets of the cover $\mathcal V$.  This implies that $X$ is totally disconnected.
\end{proof}

The last proposition gives another example of a space that is totally disconnected and not zero-dimensional (cf.~\cite[Ex. 1.2.15]{En1}).

\section{Continuous self-maps of the Golomb space}

In this section we study the structure of the set $C(\IN_\tau)$ of all continuous self-maps of the Golomb space $\IN_\tau$. In the following proposition the set $\IN^\IN$ of all self-maps of $\IN$ is endowed with the (Polish) topology of the Tychonoff product of discrete spaces $\IN$. 

Let us observe that a map $f\colon \IN_\tau\to\IN_\tau$ is continuous at a point $x\in\IN_\tau$ if and only if for every number $b$ coprime with $f(x)$ there is a number $a$ coprime with $x$ such that $f(x+a\IN_0)\subset f(x)+b\IN_0$.

\begin{proposition} The set $C(\IN_\tau)$ is an $F_{\sigma\delta}$-subset of the Polish space $\IN^\IN$.
\end{proposition}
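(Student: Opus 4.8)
The plan is to translate the pointwise continuity criterion stated just before the proposition into an explicit arithmetic predicate and then count quantifier alternations to read off the Borel complexity. The criterion says that $f$ is continuous at $x$ if and only if for every $b$ coprime with $f(x)$ there is an $a$ coprime with $x$ with $f(x+a\IN_0)\subset f(x)+b\IN_0$. The inclusion $f(x+a\IN_0)\subset f(x)+b\IN_0$ is itself a universally quantified condition over the points $x+an$ of the progression: it asserts that $f(x+an)\equiv f(x)\pmod b$ for all $n\in\IN_0$. Globally, $f\in C(\IN_\tau)$ iff $f$ is continuous at every point $x$, so the full predicate is
$$
\forall x\;\forall b\perp f(x)\;\exists a\perp x\;\forall n\;\big(f(x+an)\in f(x)+b\IN_0\big),
$$
where $\perp$ abbreviates coprimality.

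The key point is that each innermost condition is a clopen (in fact, basic) constraint on the sequence $f\in\IN^\IN$, because it depends on only finitely many coordinates of $f$ in a decidable way: for fixed $x,b,a,n$ the set $\{f:f(x+an)\equiv f(x)\pmod b\text{ and }\gcd(a,x)=1\text{ and }\gcd(b,f(x))=1\}$ is clopen, being determined by the values $f(x),f(x+an)$ only. From this I would build the complexity class up the quantifier prefix from the inside. First I would fix $x$ and $b$, and for each candidate $a$ coprime with $x$ set
$$
P_{x,b,a}=\{f\in\IN^\IN: \gcd(b,f(x))=1\Ra \forall n\;f(x+an)\equiv f(x)\ (\mathrm{mod}\ b)\},
$$
which is a countable intersection of clopen sets, hence closed ($G_\delta$, and in fact closed). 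Then the existential quantifier over $a$ gives $\bigcup_{a\perp x}P_{x,b,a}$, a countable union of closed sets, i.e.\ an $F_\sigma$ set; call it $Q_{x,b}$. The universal quantifiers over $x$ and over $b$ then intersect these: $C(\IN_\tau)=\bigcap_x\bigcap_{b\perp f(x)}Q_{x,b}$. A countable intersection of $F_\sigma$ sets is exactly an $F_{\sigma\delta}$ set, which is the claimed bound.

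The one genuine subtlety, and the step I would treat most carefully, is the handling of the hypothesis ``$b$ coprime with $f(x)$,'' since $f(x)$ is itself a coordinate of the variable point $f$ rather than a fixed parameter; this is why I phrase the inner condition as an implication $\gcd(b,f(x))=1\Ra(\dots)$ and quantify $b$ over all of $\IN$ rather than over a set depending on $f$. One must check that this implication form does not raise the complexity: for fixed $x,b,a,n$ the set it defines is still clopen (its complement requires $\gcd(b,f(x))=1$ and $f(x+an)\not\equiv f(x)\pmod b$, again depending on two coordinates), so the bookkeeping above goes through unchanged. After verifying that each of the countably many parameter tuples $(x,b,a,n)$ indexes a clopen set and that the quantifier alternation $\forall\forall\exists\forall$ collapses to the pattern (closed)$\to$($F_\sigma$)$\to$($F_{\sigma\delta}$), the conclusion that $C(\IN_\tau)$ is $F_{\sigma\delta}$ in $\IN^\IN$ follows at once.
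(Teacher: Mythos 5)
Your proof is correct and follows essentially the same route as the paper: both decompose $C(\IN_\tau)$ as $\bigcap_{x}\bigcap_{b}\big(\{f:\gcd(b,f(x))\ne 1\}\cup\bigcup_{a}(\text{closed set})\big)$, observing that each innermost condition depends on finitely many coordinates and is therefore clopen, so the union over $a$ is $F_\sigma$ and the double intersection is $F_{\sigma\delta}$. Your handling of the coprimality hypothesis as an implication is exactly the paper's union with the clopen set $\{f:\gcd(f(x),b)\ne 1\}$, so there is nothing further to add.
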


\begin{proof} It is clear that $C(\IN_\tau)=\bigcap_{x\in\IN}C_x(\IN_\tau)$ where $C_x(\IN_\tau)$ denotes the set of all functions $f:\IN_\tau\to\IN_\tau$ which are continuous at $x$. In its turn, for every $x\in\IN$ the set $C_x(\IN_\tau)=\bigcap_{b\in\IN}C_{x,b}$ where $$C_{x,b}=\{f\in\IN^\IN:\gcd(f(x),b)\ne 1\}\cup C_{x,b}'$$ and $$
C_{x,b}':=\big\{f\in\IN^\IN:\exists a\in\IN\;\big(\gcd(a,x)=1\wedge (f(x+a\IN_0)\subset f(x)+b\IN_0)\big)\big\}.$$
Put $A_x:=\{a\in\IN:\gcd(a,x)=1\}$ and observe that $$
C_{x,b}'=\bigcup_{a\in A_x}\{f\in\IN^\IN:f(x+a\IN_0)\subset f(x)+b\IN_0\}
$$is a set of type $F_\sigma$ in $\IN^\IN$ and so is the set $C_{x,b}$. Then $C_x(\IN_\tau)=\bigcap_{b\in\IN}C_{x,b}$ is of type $F_{\sigma\delta}$ and so is the set $C(\IN_\tau)=\bigcap_{x\in\IN}C_x(\IN_\tau)$.
\end{proof}

Now we give a simple sufficient condition of continuity of a self-map of the Golomb space.

\begin{definition} A function $f:\dom(f)\to\IN$ defined of a subset $\dom(f)$ of $\IN$ is called {\em progressive} if
\begin{enumerate}
\item $\Pi_x\subset\Pi_{f(x)}$ for every $x\in\dom(f)$;
\item for any $x<y$ in $\dom(f)$ the number $f(y)-f(x)$ is divisible by $(y-x)\dag f(x)$.
\end{enumerate}
\end{definition}

We recall that for two numbers $x,y$ by $x\dag y$ we denote the greatest divisor of $x$ which is coprime with $y$.

\begin{proposition}\label{p:cont} Each finite-to-one progressive function $f:\IN_\tau\to\IN_\tau$ is continuous.
\end{proposition}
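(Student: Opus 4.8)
The plan is to verify continuity pointwise by means of the continuity criterion observed above: it suffices to fix an arbitrary $x\in\IN$ together with a number $b$ coprime with $f(x)$, and to produce a number $a$ coprime with $x$ for which $f(x+a\IN_0)\subset f(x)+b\IN_0$. So the whole task reduces to engineering a single arithmetic progression $x+a\IN_0$ whose image lands in the one-sided neighborhood $f(x)+b\IN_0=\{f(x)+bm:m\ge 0\}$.

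The first observation I would record is that such a $b$ is automatically coprime with $x$. Indeed, condition~(1) of progressiveness gives $\Pi_x\subset\Pi_{f(x)}$, so every prime divisor of $x$ divides $f(x)$; since $b$ is coprime with $f(x)$, no prime divides both $b$ and $x$, whence $\gcd(b,x)=1$. This is exactly what will let me take the common difference $a$ to be a multiple of $b$ while keeping $a$ coprime with $x$. Next I would split the target membership $f(x+an)\in f(x)+b\IN_0$ into its two constituents: the congruence $f(x+an)\equiv f(x)\pmod b$ and the inequality $f(x+an)\ge f(x)$. For the congruence I would invoke condition~(2): for $n\ge 1$ the point $y=x+an$ exceeds $x$, so $(an)\dag f(x)$ divides $f(y)-f(x)$. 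A short divisibility argument then handles the passage from $b\mid an$ to $b\mid(an)\dag f(x)$: writing $an$ as the product of its greatest divisor coprime with $f(x)$ and a cofactor built only from primes dividing $f(x)$, the hypothesis $\gcd(b,f(x))=1$ makes $b$ coprime with that cofactor, so $b\mid an$ forces $b\mid(an)\dag f(x)$. Hence taking $a$ a multiple of $b$ makes $b$ divide $f(y)-f(x)$ for every $n\ge 1$, the case $n=0$ being trivial.

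The inequality $f(x+an)\ge f(x)$ is where the finite-to-one hypothesis enters, and I expect this to be the only genuinely delicate point, since the neighborhood $f(x)+b\IN_0$ contains no value below $f(x)$. The set $F:=\{y\in\IN:f(y)<f(x)\}=\bigcup_{k<f(x)}f^{-1}(k)$ is a finite union of finite preimages, hence finite (possibly empty). So by choosing $a$ to be a multiple of $b$ that is coprime with $x$ (available by the coprimality just established, e.g. $a=bt$ with $t$ a large prime not dividing $x$) and large enough that $x+a>\max F$, I force the entire progression $\{x+an:n\ge 1\}$ to lie beyond $F$, so every such term maps into $[f(x),\infty)$.

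Combining the two parts, every term of $x+a\IN_0$ is sent into $f(x)+b\IN_0$, which establishes continuity at $x$; as $x\in\IN$ was arbitrary, $f$ is continuous on all of $\IN_\tau$. The conceptual content of the argument is that condition~(2) of progressiveness delivers the required congruence essentially for free, while finite-to-oneness is precisely the property needed to lift the image above the floor $f(x)$ that the one-sided basic neighborhood $f(x)+b\IN_0$ imposes.
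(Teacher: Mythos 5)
Your proof is correct, and it rests on the same two pillars as the paper's: condition (1) of progressiveness makes the modulus coprime with $x$, condition (2) plus the observation that $\gcd(b,f(x))=1$ forces $b\mid (y-x)\dag f(x)$ delivers the congruence $f(y)\equiv f(x)\pmod b$, and finite-to-oneness disposes of the finitely many points whose images fall below the floor of the one-sided neighborhood $f(x)+b\IN_0$. The one place you diverge is in how that finite exceptional set is handled. The paper keeps the modulus $d=b$ and takes the neighborhood $O_x=(x+d\IN_0)\setminus F$ with $F=f^{-1}\big((f(x)+d\IZ)\setminus(f(x)+d\IN_0)\big)$, which is open only because finite sets are closed in $\IN_\tau$ (a fact the paper uses silently). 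You instead inflate the modulus to $a=bt$ with $t$ a large prime not dividing $x$, so that the progression $x+a\IN$ jumps entirely past $\max\{y:f(y)<f(x)\}$; this yields a genuinely basic neighborhood and matches the stated continuity criterion verbatim, at the small cost of the extra divisibility step $b\mid an\Rightarrow b\mid (an)\dag f(x)$, which you justify correctly. Both versions are sound; yours is marginally more self-contained.
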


\begin{proof} Given a point $x\in\IN_\tau$ and a neighborhood $O_{f(x)}\in\tau$ of $f(x)$, we need to find a neighborhood $O_x\in\tau$ of $x$ such that $f(O_x)\subset O_{f(x)}$. We lose no generality assuming that $O_{f(x)}$ is of basic form $O_{f(x)}=f(x)+d\IN_0$. Since $\Pi_x\subset \Pi_{f(x)}$, the numbers $x$ and $d$ are coprime and hence $x+d\IN_0$ is a neighborhood of $x$ in $\IN_\tau$. Since the function $f$ is finite-to-one, the set $F= f^{-1}\big((f(x)+d\IZ)\setminus(f(x)+d\IN_0)\big)$ is finite and hence $O_x=(x+d\IN_0)\setminus F$ is a neighborhood of $x$ in the Golomb space $\IN_\tau$. It remains to prove that $f(O_x)\subset O_{f(x)}$. Given any $y\in O_x$, we need to show that $f(y)\in f(x)+d\IZ$. This is trivially true if $y=x$. So, we assume that $y\ne x$ and hence $y\in x+d\IN$. Then $d$ divides $y-x$. Since $d$ and $f(x)$ are coprime, $d$ divides the number $b=(y-x)\dag  f(x)$. Taking into account that the function $f$ is progressive, we conclude that $f(y)-f(x)\in b\IZ\subset d\IZ$.
\end{proof}

For polynomials with non-negative integer coefficients the equivalence of conditions $(1)$--$(3)$ in the following theorem was proved by Szczuka \cite[Theorem 4.3]{Szcz13}.

\begin{theorem}\label{poly} For a non-constant polynomial $f:\IN\to\IN$, $f:x\mapsto a_0+a_1x+\dots+a_nx^n$ with integer coefficients the following conditions are equivalent:
\begin{enumerate}
\item $a_0=0$;
\item $f$ is a continuous self-map of the Golomb space $\IN_\tau$;
\item for any connected subspace $C\subset \IN_\tau$ the image $f(C)$ is connected;
\item for any superconnected subspace $C\subset \IN_\tau$ the image $f(C)$ is superconnected;
\item for any superconnected subspace $C\subset \IN_\tau$ the image $f(C)$ is connected.
\end{enumerate}
\end{theorem}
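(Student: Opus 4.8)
The plan is to establish the cycle of implications $(1)\Ra(2)\Ra(4)\Ra(5)\Ra(1)$ together with the side implications $(2)\Ra(3)\Ra(5)$, which renders all five conditions equivalent. Four of these are soft. For $(1)\Ra(2)$ I would verify that a non-constant polynomial with $a_0=0$ is a finite-to-one progressive map and then invoke Proposition~\ref{p:cont}: the map is finite-to-one because $f$ attains each value at most $\deg f$ times; condition~(1) of progressiveness holds because $a_0=0$ forces $x\mid f(x)$ and hence $\Pi_x\subset\Pi_{f(x)}$; and condition~(2) holds for \emph{every} integer polynomial, since $y-x$ divides $f(y)-f(x)$ while $(y-x)\dag f(x)$ is a divisor of $y-x$. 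The implications $(2)\Ra(4)$ and $(2)\Ra(3)$ are just the preservation of superconnectedness (Proposition~\ref{supcont}(2)) and of connectedness under continuous maps, while $(3)\Ra(5)$ and $(4)\Ra(5)$ follow from Proposition~\ref{supcont}(1), a superconnected space being connected.

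The whole content therefore lies in $(5)\Ra(1)$, which I would prove by contraposition: assuming $a_0\ne0$, I would exhibit a superconnected subspace $C$ whose image $f(C)$ is disconnected. The guiding principle comes from Lemma~\ref{basic}: for a prime $p$ the closure of $c+p\IN_0$ is $p\IN\cup(c+p\IZ)$, so it swallows all of $p\IN$; to get a genuine separation of $f(C)$ I must confine the two halves of the image to two \emph{distinct and nonzero} residue classes modulo one prime $p$. Concretely, I would fix a prime $p$ with $p\nmid a_0$, with $p$ not dividing the leading coefficient $a_n$, and with $p$ large enough that the reduction $f\bmod p$ (a polynomial of degree $\deg f\ge1$ over the field of $p$ elements) takes at least three distinct values. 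Setting $c:=a_0\bmod p\ne0$, I would select a residue $r\in\{1,\dots,p-1\}$ with $e:=f(r)\bmod p\notin\{0,c\}$ and put
$$C:=p\IN\cup(r+p\IN_0),$$
which is superconnected by Proposition~\ref{superconnect} applied to the two progressions $0+p\IN_0$ and $r+p\IN_0$ (padding the remaining terms with $0+p\IN_0$).

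It then remains to check that $f(C)=A\cup B$, with $A:=f(p\IN)$ and $B:=f(r+p\IN_0)$, is a separation. Since $p$ divides the modulus in each progression, $f$ is constant modulo $p$ on each block: every element of $A$ is congruent to $a_0\equiv c$ and every element of $B$ is congruent to $e$ modulo $p$, so $A\subset c+p\IN_0$, $B\subset e+p\IN_0$, and $A\cap B=\emptyset$. By Lemma~\ref{basic}, $\overline A\subset p\IN\cup(c+p\IZ)$; because the elements of $B$ are congruent to $e\not\equiv0,c\pmod p$, they avoid both $p\IN$ and $c+p\IZ$, whence $\overline A\cap B=\emptyset$, and symmetrically $\overline B\cap A=\emptyset$. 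Thus $A$ and $B$ are non-empty, disjoint and relatively clopen in $f(C)$, so $f(C)$ is disconnected, contradicting~(5).

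The main obstacle is precisely the closure phenomenon of Lemma~\ref{basic}: arithmetic progressions have enormous closures (each $\overline{c+p\IN_0}$ contains all of $p\IN$), which is exactly why images in the Golomb space resist disconnection, and why the naive attempt to separate by divisibility by a single prime collapses. The device that defeats this is twofold — forcing $f$ to be constant modulo $p$ on each block by having $p$ divide the modulus, and using the arithmetic fact that a non-constant polynomial reduced modulo a large prime assumes at least three values, so the two blocks can be pinned to two distinct nonzero residues. Confirming the existence of such a prime and the value count of $f\bmod p$ is the only genuinely number-theoretic input, and it is where I expect the most care to be needed.
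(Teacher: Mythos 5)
Your proposal is correct and follows essentially the same route as the paper: $(1)\Ra(2)$ via finite-to-one progressive maps and Proposition~\ref{p:cont}, the soft implications via Proposition~\ref{supcont}, and $(5)\Ra(1)$ by exhibiting the superconnected set $p\IN\cup(r+p\IN_0)$ whose image splits into two residue classes modulo $p$. The paper reaches the two distinct nonzero residues more cheaply --- pick $x$ with $f(x)\ne a_0$ and a prime $p$ exceeding $a_0$, $x$ and $f(x)$, so that $f(X)\cap(a_0+p\IZ)$ and $f(X)\cap(f(x)+p\IZ)$ are already disjoint, non-empty and open in $f(X)$ --- whereas your value-counting for $f\bmod p$ and the closure computation are a slightly heavier but equally valid way to certify the same separation.
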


\begin{proof} To prove the implication $(1)\Ra(2)$, assume that $a_0=0$. In this case the polynomial $f(x)$ is divisible by $x$ and $f(x)-f(y)$ is divisible by $x-y$. These observations imply that the function $f$ is progressive. Since $f$ is not constant, it is finite-to-one. Applying Proposition~\ref{p:cont}, we conclude that $f$ is continuous.
\smallskip

The implications $(3)\Leftarrow(2)\Ra(4)$ and $(3)\Ra(5)\Leftarrow(4)$ follow from Proposition~\ref{supcont}. So, it remains to prove that $(5)\Ra(1)$. To derive a contradiction, assume that $a_0\ne 0$. Since $f$ is not constant, there exists $x\in\IN$ such that $f(x)\ne a_0$. Choose any prime number $p>\max\{a_0,x,f(x)\}$. By Proposition~\ref{superconnect}, the subspace $X=p\IN\cup(x+p\IN_0)$ is superconnected. On the other hand, its image $f(X)$  can be written as the union $f(X)=U\cup V$ of two non-empty disjoint open subspaces $U=f(p\IN)=f(X)\cap (a_0+p\IZ)$ and $V=f(x+p\IN)=f(X)\cap(f(x)+p\IZ)$ of $f(X)$.
\end{proof}

The following example shows that Theorem~\ref{poly} cannot be extended to polynomials with rational coefficients.

\begin{example} The polynomial $f:\IN_\tau\to\IN_\tau$, $f:x\mapsto \frac12(x+x^2)$, is discontinuous.
\end{example}

\begin{proof} It suffices to check that $f$ is discontinuous at $x=2$. Assuming that $f$ is continuous at $2$, for the neighborhood $3+2\IN_0$ of $3=f(2)$, we can find a basic neighborhood $2+b\IN_0$ of $2$ such that $f(2+b\IN_0)\subset 3+2\IN_0$. The number $b$, being coprime with $2$, is odd and hence can be written as $b=2n-1$ for some $n\in\IN$. Consider the number $4n=2+2(2n-1)=2+2b\in 2+b\IN_0$ and observe that $f(4n)=2n(4n+1)\notin 3+2\IN_0$, which contradicts the choice of $b$.
\end{proof}

\begin{corollary} The Golomb space contains a countable family of pairwise disjoint closed infinite superconnected subspaces.
\end{corollary}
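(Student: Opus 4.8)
The plan is to produce, for each $n\in\IN$, an infinite closed superconnected subspace $X_n\subseteq\IN_\tau$ in such a way that the family $\{X_n\}_{n\in\IN}$ is pairwise disjoint; since superconnected spaces are connected by Proposition~\ref{supcont}(1), the same family simultaneously witnesses the weaker ``closed connected'' statement announced in the abstract. The obvious first attempt is to invoke Proposition~\ref{superconnect} and take each $X_n$ of the shape $b_n\IN\cup(\text{prongs})$, but this cannot work as stated: in that proposition the point witnessing superconnectedness is always extracted from the ``backbone'' $b_0\IN$, and any two backbones meet, since $b_n\IN\cap b_m\IN\supseteq\operatorname{lcm}(b_n,b_m)\IN\ne\emptyset$. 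So no pairwise disjoint family can be assembled out of sets each containing a full residue class $b\IN$, and the construction is forced to use superconnected sets that contain \emph{no} set of the form $b\IN$.

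The key step is to reread the proof of Proposition~\ref{superconnect} and isolate what the backbone is really used for. Given finitely many nonempty relatively open sets $U_1,\dots,U_k\subseteq X$, each $U_j$ contains an arithmetic progression $e_j+f_j\IN$ (Chinese Remainder Theorem~\ref{Chinese}), and Lemma~\ref{basic} gives $\overline{U_j}\supseteq\bigcap_{p\in\Pi_{f_j}}p\IN$; hence \emph{any} element of $X$ divisible by all primes in $\bigcup_j\Pi_{f_j}$ is a common point of the closures $\overline{U_1},\dots,\overline{U_k}$. Thus $X$ is superconnected as soon as it contains, for every finite set $S$ of primes arising in this way, one integer divisible by $\prod_{p\in S}p$. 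A backbone $b\IN$ is merely the crudest device guaranteeing this, and a suitably ``divisible'' sparse sequence inside $X$ does the same job. Accordingly I would build each $X_n$ as a union of arithmetic progressions attached to a sparse spine $B_n$ of highly composite integers, choose the spines $B_n$ pairwise disjoint by a direct divisibility bookkeeping, and control the closures (hence closedness and the above intersections) through Lemma~\ref{basic}.

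The main obstacle is that these requirements pull against one another: superconnectedness wants the spine to be as fat as a residue class, whereas disjointness and the sparseness that separates the $X_n$ want it thin. Concretely, if all elements of $X_n$ are kept coprime to the primes used to separate it from the other $X_m$, then for a relatively open $U_j\subseteq X_n$ the modulus $f_j$ of the progression inside it is \emph{forced} to be divisible by exactly those excluded primes, so the set $S$ of primes whose product must divide some spine element cannot be steered away from the separating primes. Overcoming this is the heart of the argument: one must use the \emph{full} closure computed in Lemma~\ref{basic}, namely that $\overline{e_j+f_j\IN_0}$ contains not only $p\IN$ but also the mixed class $e_j+p^{l_p(f_j)}\IZ$ for each $p\in\Pi_{f_j}$, so that a single common witness $w\in X_n$ can be assembled by congruences via Theorem~\ref{Chinese} rather than by pure divisibility. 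I expect the simultaneous bookkeeping ensuring closedness, superconnectedness, sparseness and pairwise disjointness to be the only genuinely delicate part; once it is in place, Proposition~\ref{supcont}(1) gives connectedness of every $X_n$ for free, and the family $\{X_n\}_{n\in\IN}$ is the required one.
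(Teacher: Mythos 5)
Your proposal correctly diagnoses the obstruction (any two sets each containing a full residue class $b\IN$ must intersect, so Proposition~\ref{superconnect} cannot be applied off the shelf) and correctly observes that the witness point in its proof only needs to satisfy suitable divisibility/congruence conditions rather than lie in a backbone $b_0\IN$. But the argument stops exactly where the proof has to begin: no family $\{X_n\}_{n\in\IN}$ is actually exhibited, and the ``simultaneous bookkeeping ensuring closedness, superconnectedness, sparseness and pairwise disjointness'' is deferred rather than carried out. This deferred part is not routine. Closedness of a union of infinitely many arithmetic progressions attached to a sparse spine is genuinely problematic, since by Lemma~\ref{basic} the closure of each progression $e+f\IN_0$ already swallows the set $\bigcap_{p\in\Pi_f}p\IN$, so such unions accumulate heavily; and superconnectedness forces the spine of $X_n$ to contain solutions of arbitrary finite systems of congruences generated by the moduli $f_j$ of progressions inside relatively open subsets of $X_n$ --- moduli you do not control, because they depend on the open sets $U_j$, not on your construction. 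You have not shown these demands can be met while keeping the spines pairwise disjoint, so as it stands there is a genuine gap, not merely an omitted computation.

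The paper avoids the direct construction entirely by routing through Proposition~\ref{supcont}(2): continuous images of superconnected spaces are superconnected. By Theorem~\ref{poly} each polynomial $f_n\colon x\mapsto x^2+nx$ is a continuous self-map of $\IN_\tau$; choosing a prime $p_n>n^2+n$, the preimage $f_n^{-1}(p_n\IN)=p_n\IN\cup(p_n-n+p_n\IN_0)$ is superconnected by Proposition~\ref{superconnect}, hence $X_n:=f_n(\IN)\cap p_n\IN$ is superconnected as its continuous image. Closedness comes from Lemma~\ref{l:frob} (via the Frobenius density theorem, $f_n(\IN)$ is closed) together with closedness of $p_n\IN$, and disjointness of $X_n$ and $X_m$ for $n<m$ is an elementary estimate: a common value $x^2+nx=y^2+my$ forces $y<m^2$, while divisibility by $p_m$ forces $y\ge p_m-m>m^2$. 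If you want to salvage your approach, this is the idea to import: do not build the superconnected sets by hand; push a known superconnected set forward through a continuous finite-to-one map whose image is provably closed and can be separated arithmetically from the other images.
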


\begin{proof} By Theorem~\ref{poly}, for every $n\in\IN$ the polynomial $f_n:\IN_\tau\to\IN_\tau$, $f_n:x\mapsto x^2+nx$, is a continuous self-map of the Golomb space $\IN_\tau$.

By Lemma~\ref{l:frob} proved below, the set $f_n(\IN)$ is closed in the Golomb space $\IN_\tau$. Choose any prime number $p_n> n^2+n$ and observe that the set $p_n\IN$ is closed in the Golomb space $\IN_\tau$. Then the intersection $X_n=p_n\IN\cap f_n(\IN)$ is closed in $\IN_\tau$, too. By Proposition~\ref{superconnect}, the set $$f_n^{-1}(X_n)=f_n^{-1}(p_n\IN)=\{x\in \IN:x(x+n)\in p_n\IN\}=p_n\IN\cup(p_n-n+p_n\IN_0)$$ is superconnected and so is its image $f_n(f_n^{-1}(X_n))=X_n$.

It remains to prove that for any numbers $n<m$ the sets $X_n$ and $X_m$ are disjoint. Assuming that $X_n\cap X_m$ contains some number $z$, we conclude that $z\in X_n\cap X_m\subset p_n\IN\cap p_m\IN\subset p_m\IN$.
Find numbers $x,y\in\IN$ such that $x^2+nx=f_n(x)=z=f_m(y)=y^2+my$ and observe that $$(2x+n)^2-n^2=4x^2+4nx=4y^2+4my=(2y+m)^2-m^2.$$
Then $(2(y-x)+m-n)(2(y+x)+m+n)=(2y+m)^2-(2x+n)^2=m^2-n^2$ and hence $$y\le 2(y+x)+m+n\le m^2-n^2<m^2.$$ On the other hand, $y(y+m)=z\in p_m\IN$ implies that $y$ or $y+m$ is divisible by $p_m$ and hence $y\ge p_m-m>m^2$ by the choice of prime number $p_m$.  This contradiction shows that $X_n\cap X_m=\emptyset$.
\end{proof}

\begin{lemma}\label{l:frob} For any $n\in\IN_0$ the set $X:=\{x^2+nx:x\in \IN\}$ is closed in the Golomb space $\IN_\tau$.
\end{lemma}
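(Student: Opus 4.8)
The plan is to reduce membership in $X$ to a single perfect-square condition and then to separate points outside $X$ by means of a quadratic non-residue obstruction. Completing the square gives $4(x^2+nx)+n^2=(2x+n)^2$, so every element of $X$ makes $4z+n^2$ a perfect square. Conversely, if $4z+n^2=k^2$ for some $z\in\IN$, then $k^2=4z+n^2\ge n^2+4>n^2$ forces $k>n$, while reducing mod $2$ shows $k\equiv k^2\equiv n^2\equiv n\pmod 2$; hence $k\ge n+2$, and $x:=(k-n)/2$ is a positive integer with $x^2+nx=(k^2-n^2)/4=z$. Thus the size and parity side-conditions are automatic, and we obtain the clean description
$$X=\{z\in\IN:4z+n^2\text{ is a perfect square}\}.$$

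First I would fix a point $z\in\IN\setminus X$, so that $w:=4z+n^2$ is not a perfect square, and look for a basic neighborhood of $z$ disjoint from $X$. The idea is that for a modulus of the form $b=p$ with $p$ prime, every point $z'\in z+p\IN_0$ satisfies $4z'+n^2\equiv w\pmod p$; so if $w$ is a quadratic non-residue modulo $p$, then $4z'+n^2$ can never be a perfect square, whence $z'\notin X$. Since $\gcd(z,p)=1$ whenever $p\nmid z$, the set $z+p\IN_0$ is then a basic open neighborhood of $z$ lying entirely in the complement of $X$, and letting $z$ vary proves that $\IN\setminus X$ is open.

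It therefore remains to supply, for a non-square $w$, an odd prime $p$ with $p\nmid z$ for which $w$ is a quadratic non-residue; this is the one genuinely number-theoretic ingredient and the main obstacle. Writing $w=u^2v$ with $v>1$ square-free, one has $\left(\frac{w}{p}\right)=\left(\frac{v}{p}\right)$ for all odd $p\nmid u$, and by quadratic reciprocity the value $\left(\frac{v}{p}\right)$ depends only on the residue class of $p$ modulo $4v$ and defines a non-trivial character there; Dirichlet's Theorem~\ref{Dirichlet} then yields infinitely many primes $p$ with $\left(\frac{v}{p}\right)=-1$, from which we discard the finitely many that divide $z$ or $u$. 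I would isolate this as a short auxiliary claim, proving it by selecting explicit congruence conditions on $p$ modulo $4v$ through the Chinese Remainder Theorem~\ref{Chinese} and then invoking Dirichlet. With such a prime $p$ in hand, the congruence computation of the previous paragraph completes the proof that $X$ is closed.
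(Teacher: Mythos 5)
Your proof is correct, but it follows a genuinely different route from the paper's. The paper takes $a\notin X$, observes that the monic polynomial $f(x)=x^2+nx-a$ has no integer (hence no rational) root and is therefore irreducible over $\IQ$, and invokes the Frobenius Density Theorem to obtain infinitely many primes $p$ modulo which $f$ has no root; any such $p>a$ makes $a+p\IN_0$ a basic neighborhood disjoint from $X$. You instead complete the square to identify $X$ with $\{z\in\IN: 4z+n^2\ \text{is a perfect square}\}$ (your verification of the size and parity side-conditions is correct), and then reduce the problem to producing an odd prime $p\nmid z$ for which $w=4z+n^2$ is a quadratic non-residue, which you extract from quadratic reciprocity together with Dirichlet's theorem. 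For a quadratic polynomial the two arguments encode the same arithmetic fact --- Frobenius/Chebotar\"ev for the field $\IQ(\sqrt{n^2+4a})$ amounts to quadratic reciprocity plus Dirichlet --- but your version is elementary and self-contained modulo these two classical ingredients, whereas the paper's is shorter and generalizes verbatim to the image of any integer polynomial irreducible over $\IQ$. Two small points to tighten in a write-up: the identity $\left(\frac{w}{p}\right)=\left(\frac{v}{p}\right)$ also needs $p\nmid v$ (automatic once you impose $\left(\frac{v}{p}\right)=-1$), and since Theorem~\ref{Dirichlet} as stated only yields \emph{one} prime per progression, you should either quote the standard ``infinitely many'' form of Dirichlet or, as you suggest, build the congruence conditions $p\not\equiv 0$ modulo the prime divisors of $z$ and $u$ directly into the progression via the Chinese Remainder Theorem~\ref{Chinese} before applying it.
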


\begin{proof} Given any $a\in \IN\setminus X$, it suffices to find a prime number $p>a$ such that $a+p\IN_0$ is disjoint with $X$. Consider the polynomial $f(x):=x^2+nx-a$ and observe that its positive root $\frac{-n+ \sqrt{n^2+4a}}{2}$ is not integer (as $a\notin X$). This implies that $f$ has no rational roots and hence $f$ is irreducible over the field $\IQ$. By the classical Frobenius Density Theorem (see \cite{Sury} or \cite{SL}), there exist infinitely many prime numbers $p$ such that the polynomial $f$ has no roots in $\IZ/p\IZ$. For any such prime number $p$ we have $(a+p\IN)\cap X=\emptyset$.
\end{proof}

Lemma~\ref{l:frob} implies that the set $\{x^2:x\in\IN\}$ is closed in $\IN_\tau$. On the other hand, we have the following fact.

\begin{proposition} For any $n\in\IN$ the set $X_{8n}:=\{x^{8n}:x\in\IN\}$ is not closed in the Golomb space $\IN_\tau$.
\end{proposition}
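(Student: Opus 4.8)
The plan is to show that the point $a=2^{4n}$ belongs to the closure $\overline{X_{8n}}$ but not to $X_{8n}$ itself, which immediately yields that $X_{8n}$ is not closed. That $a\notin X_{8n}$ is clear: if $2^{4n}=x^{8n}$ for some $x\in\IN$, then $8n\cdot l_2(x)=l_2(2^{4n})=4n$, forcing $l_2(x)=\tfrac12$, which is impossible. To prove $a\in\overline{X_{8n}}$ it suffices to check that every basic neighborhood $2^{4n}+b\IN_0$ with $\gcd(2^{4n},b)=1$ (equivalently, with $b$ odd) meets $X_{8n}$. This reduces to a purely arithmetic statement: for every odd $b$ the congruence $x^{8n}\equiv 2^{4n}\pmod b$ is solvable. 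Indeed, once a solution $x_0\in\IN$ is found, every $x_0+bt$ with $t\in\IN_0$ is again a solution, and for $t$ large enough $(x_0+bt)^{8n}\ge 2^{4n}$; such a value lies both in $2^{4n}+b\IN_0$ and in $X_{8n}$.

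The heart of the argument is the classical fact that $16$ is an eighth power modulo every odd prime power. By the Chinese Remainder Theorem~\ref{Chinese} it is enough to solve $x^{8n}\equiv 2^{4n}\pmod{p^k}$ for each odd prime power $p^k$ dividing $b$, and for this I shall first solve $x^8\equiv 16\pmod{p^k}$ and then raise to the $n$-th power, since $c^8\equiv 16\pmod{p^k}$ implies $c^{8n}\equiv 16^n=2^{4n}\pmod{p^k}$. To solve $x^8\equiv 16$ modulo an odd prime $p$, I use the factorization
$$x^8-16=(x^2-2)(x^2+2)(x^2-2x+2)(x^2+2x+2),$$
whose four quadratic factors acquire a root modulo $p$ precisely when, respectively, $2$, $-2$, $-1$, and again $-1$ is a quadratic residue. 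Since the Legendre symbols satisfy $\left(\frac{2}{p}\right)\left(\frac{-2}{p}\right)\left(\frac{-1}{p}\right)=1$, the three residues $2,-2,-1$ cannot all be non-residues, so at least one quadratic factor has a root and $x^8\equiv 16\pmod p$ is solvable. Any such root $r$ is nonzero modulo $p$ (as $16\not\equiv 0$) and satisfies $8r^7\not\equiv 0\pmod p$ because $p$ is odd; hence Hensel's Lemma lifts $r$ to a solution of $x^8\equiv 16\pmod{p^k}$.

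Combining these steps, for every odd prime power $p^k\mid b$ I obtain $c_p$ with $c_p^{8n}\equiv 2^{4n}\pmod{p^k}$, and the Chinese Remainder Theorem~\ref{Chinese} assembles these into a single $x_0$ solving $x^{8n}\equiv 2^{4n}\pmod b$; enlarging $x_0$ as above places a point of $X_{8n}$ inside $2^{4n}+b\IN_0$, proving $a\in\overline{X_{8n}}\setminus X_{8n}$. The only genuinely non-routine ingredient, and the step I expect to be the main obstacle, is the number-theoretic claim that $16$ is an eighth power residue modulo every odd prime: the factorization of $x^8-16$ together with the identity $\left(\frac{2}{p}\right)\left(\frac{-2}{p}\right)\left(\frac{-1}{p}\right)=1$ is exactly what makes it work, while the passage to prime powers via Hensel's Lemma and the assembly over $b$ via the Chinese Remainder Theorem are standard.
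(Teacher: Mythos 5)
Your proof is correct, and its number-theoretic core is the same as the paper's: both rest on the Wang counterexample $x^8=16$, the factorization $x^8-16=(x^2-2)(x^2+2)(x^2-2x+2)(x^2+2x+2)$, the fact that one of $2,-2,-1$ is always a quadratic residue modulo an odd prime, a Hensel lift to odd prime powers, and the Chinese Remainder Theorem to handle an arbitrary odd modulus $b$. Where you genuinely diverge is in the treatment of the exponent $8n$. The paper first establishes $16\in\overline{X_8}\setminus X_8$ and then transfers this to $X_{8n}$ topologically, using that $f\colon x\mapsto x^n$ is continuous (Theorem~\ref{poly}) and injective, so that $16^n=f(16)\in f(\overline{X_8}\setminus X_8)\subset\overline{X_{8n}}\setminus X_{8n}$. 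You instead attack the congruence $x^{8n}\equiv 2^{4n}\pmod b$ directly, obtaining a solution by raising a solution of $c^8\equiv 16$ to the $n$-th power inside the congruence; this keeps the whole argument at the level of elementary number theory and does not invoke the continuity of polynomial maps at all. Two further small differences both favour self-containedness: you justify ``one of $2,-2,-1$ is a square mod $p$'' via the identity $\left(\frac{2}{p}\right)\left(\frac{-2}{p}\right)\left(\frac{-1}{p}\right)=\left(\frac{4}{p}\right)=1$ rather than citing the explicit quadratic-residue theorems from Apostol, and your exclusion of $2^{4n}$ from $X_{8n}$ via $l_2$ is a clean one-line replacement for the paper's appeal to injectivity of $x\mapsto x^n$. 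The trade-off is that the paper's route showcases the transfer principle ``continuous injective images of non-closed sets stay non-closed,'' which is of independent interest in this topological setting, whereas yours is the shorter and more elementary path to the stated proposition.
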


\begin{proof} First we show that the set $X_8=\{x^8:x\in\IN\}$ is not closed in $\IN_\tau$. For this purpose we shall exploit a well-known Wang Counterexample \cite{Wang} saying that the equation $x^8=16$ has no integer solutions but has solutions in any field of odd prime order.

We shall show that $16\in \overline{X_8}\setminus X_8$. Given any
neighborhood $O_{16}\subset \IN_\tau$ of $16$, we should prove that
$O_{16}\cap X_8\ne\emptyset$. By the definition of the topology $\tau$, there exists an odd number $b\in\IN$ such that $16+b\IN_0\subset O_{16}$. Observe that $$x^{8}-16=(x^2-2)(x^2+2)(x^2-2x+2)(x^2+2x+2).$$

 Theorems 9.3, 9.4 and 9.5 in \cite{Ap} imply that for every odd prime number $p$ one of the numbers $2,-2,-1$ is a square in field $\IZ/p\IZ$.
If some $x\in\IZ/p\IZ$ has $x^2=\pm2$, then $x^8=16$. If $x^2=-1$, then $(1+x)^2-2(1+x)+2=0$ and hence $(1+x)^8=16$. In any case, for any odd prime number $p$ the polynomial $f(x):=x^8-16$ has a root in the field $\IZ/p\IZ$. By induction we shall show that this polynomial has a root in the residue rings $\IZ/p^k\IZ$ for all $k\in\IN$. Assume that for some $k\in\IN$ we have found a number $s\in\IZ$ such that $f(s)\in p^k\IZ$. We claim that $f'(s)=8x^7\notin p\IZ$. Otherwise $s$ would be divisible by $p$ and then $s^8-16$ cannot be divisible by $p^k$, which is a contradiction. So, $f'(s)\notin p\IZ$ and we can apply Theorem 5.30 in \cite{Ap} to find a number $r\in\IZ$ such that $f(r)\in p^{k+1}\IZ$, which implies that the equation $x^8-16=0$ has a solution in the residue ring $p^{k+1}\IZ$.

So, for every prime divisor $p$ of $b$ we can find a number $x_p\in\IN$ such that $x_p^8-16\in p^{l_p(b)}\IN_0$.
Using the Chinese Remainder Theorem~\ref{Chinese}, find a number $x\in\IN$ such that $x\ge 16$ and $x\in x_p+p^{l_p(b)}\IZ$ for every $p\in\Pi_b$.
Then $$x^8-16\in \bigcap_{p\in\Pi_b}x^8-16+p^{l_p(b)}\IZ=
\bigcap_{p\in\Pi_b}x_p^8-16+p^{l_p(b)}\IZ=
\bigcap_{p\in\Pi_b}p^{l_p(b)}\IZ=b\IZ$$and hence $x^8\in X_8\cap (16+b\IN_0)$. So, $(16+b\IN_0)\cap X_8\ne\emptyset$.

Now we prove that for any $n\in\IN$ the set $X_{8n}=\{x^{8n}:x\in\IN\}$ is not closed in $\IN_\tau$. By Theorem~\ref{poly}, the polynomial map $f\colon \IN_\tau\to\IN_\tau$, $f\colon x\mapsto x^n$, is continuous. Taking into account that this map is injective and $f(X_8)=X_{8n}$, we conclude that $$16^n=f(16)\in f(\overline{X_8}\setminus X_8)\subset \overline{f(X_8)}\setminus f(X_8)=\overline{X_{8n}}\setminus X_{8n}.$$
\end{proof}

Now we show that the set $C(\IN_\tau)$ of all continuous self-maps of the Golomb space has cardinality of continuum.

\begin{theorem} The set $C(\IN_\tau)$ contains a subset $\partial T$ of cardinality continuum which is closed in $\IN^\IN$.
\end{theorem}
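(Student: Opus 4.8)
The plan is to realize $\partial T$ literally as the boundary (set of infinite branches) of a suitable tree $T$ of finite approximations. Recall that for any tree $T\subset\bigcup_{n\in\w}\IN^n$ the boundary $\partial T=\{f\in\IN^\IN:(f(1),\dots,f(n))\in T\text{ for all }n\}$ is automatically closed in $\IN^\IN$, being an intersection of clopen sets, and that $|\partial T|=\mathfrak c$ as soon as $T$ is pruned and every node has at least two incomparable extensions (so that $2^{\w}$ embeds into $\partial T$, while $|\partial T|\le|\IN^\IN|=\mathfrak c$ is trivial). Thus the whole theorem reduces to building a pruned, everywhere-splitting tree each of whose branches is the sequence of values of a \emph{continuous} self-map. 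To secure continuity I would invoke Proposition~\ref{p:cont}: it suffices that each branch be a finite-to-one progressive function, and the simplest device forcing finite-to-one is to require every branch to be strictly increasing, hence injective.

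Accordingly, I would let $T$ be the tree of all finite sequences $(w_1,\dots,w_n)\in\IN^n$ for which the partial function $g\colon\{1,\dots,n\}\to\IN$, $g(i)=w_i$, is strictly increasing and progressive (conditions (1)--(2) of the definition hold for all pairs $x<y$ in $\{1,\dots,n\}$). This family is obviously closed under taking initial segments, so it is a tree, and every branch is a strictly increasing, hence finite-to-one, progressive total map $f\colon\IN_\tau\to\IN_\tau$, which is continuous by Proposition~\ref{p:cont}; so $\partial T\subset C(\IN_\tau)$. Everything then comes down to the one substantive point, which is also the main obstacle: the \textbf{Extension Lemma}, asserting that every node $(w_1,\dots,w_n)\in T$ has infinitely many one-step extensions in $T$.

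To prove the Extension Lemma I would observe that a value $w$ for $f(n+1)$ yields an extension in $T$ exactly when $w>w_n$ and the enlarged partial function stays progressive; since the old pairs are untouched, this means $w\equiv g(x)\pmod{(n+1-x)\dag g(x)}$ for each $x\le n$ (condition (2) at the new pair) together with $w\equiv 0\pmod p$ for every prime $p\mid n+1$ (condition (1) at $n+1$). By the Chinese Remainder Theorem~\ref{Chinese} this system has solutions forming an infinite arithmetic progression, provided the moduli are pairwise compatible, and the crux is the two compatibility checks. Writing $m_x=(n+1-x)\dag g(x)$: if a prime $p\mid n+1$ divided $m_x$, then $p\mid n+1-x$, so $p\mid x$, whence $p\in\Pi_x\subset\Pi_{g(x)}$ by progressivity, i.e. $p\mid g(x)$, contradicting that $m_x$ is coprime to $g(x)$; hence $\gcd(m_x,p)=1$ and that pair is compatible. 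For $x<x'\le n$ I would check compatibility prime-by-prime: if $q^k\mid\gcd(m_x,m_{x'})$ then $q^k\mid n+1-x$ and $q^k\mid n+1-x'$, so $q^k\mid x'-x$, while $q\nmid g(x)$; therefore $q^k$ divides $(x'-x)\dag g(x)$, which divides $g(x')-g(x)$ by progressivity of $g$. Summing over prime powers gives $\gcd(m_x,m_{x'})\mid g(x')-g(x)$, the required compatibility.

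Granting the lemma, the solution set is an unbounded arithmetic progression, so infinitely many of its members exceed $w_n$ and each gives a distinct one-step extension in $T$; in particular $T$ is pruned and splits at every node. Hence $\partial T$ is a perfect, and therefore size-$\mathfrak c$, subset of $\IN^\IN$, closed in $\IN^\IN$ and contained in $C(\IN_\tau)$, as required. I expect the only delicate part to be precisely the two $\gcd$-divisibility verifications above, where the interaction between the operation $\dag$ and the definition of a progressive function does all the work; the descriptive-set-theoretic packaging (closedness and cardinality of $\partial T$) is routine.
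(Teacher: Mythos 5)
Your proposal is correct and follows essentially the same route as the paper: the same tree of strictly increasing progressive finite sequences, continuity of branches via Proposition~\ref{p:cont}, and an infinite-extension lemma at every node verified through the two Chinese Remainder Theorem compatibility conditions (primes dividing $n+1$ versus the moduli $(n+1-k)\dag f(k)$, and pairs of such moduli). Your prime-by-prime verification of the second compatibility condition is just a spelled-out version of the paper's observation that $d=\gcd\big((n+1-k)\dag f(k),(n+1-l)\dag f(l)\big)$ divides $l-k$ and is coprime with $f(k)$, hence divides $(l-k)\dag f(k)$.
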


\begin{proof} For every $n\in\IN$ let $T_n$ be the family of increasing progressive functions $f:[1,n]\to\IN$ defined on the interval $[1,n]:=\{1,\dots,n\}$. Let $T_0$ be the singleton consisting of the unique function $f_0:\emptyset\to\IN$. On the union $T=\bigcup_{n\in\w}T_n$ consider the partial order $\le$ defined by $f\le g$ iff $\dom(f)\subset \dom(g)$ and $f=g|\dom(f)$. It is clear that this partial order turns $T$ into a tree. The set $\partial T=\{f\in\IN^\IN:\forall n\in\IN\;f|[1,n]\in T_n\}$ is closed in $\IN^\IN$ and can be identified with the set of branches of the tree $T$. Since each function $f\in\partial T$ is increasing and progressive, Proposition~\ref{p:cont} guarantees that $f$ is continuous and hence $\partial T\subset C(\IN_\tau)$.

It remains to check that $|\partial T|=\mathfrak c$. This equality will follow as soon as for any $n\in\IN$ and $f\in T_n$ we show that the set $\mathrm{succ}(f)=\{g\in T_{n+1}:g|[1,n]=f\}$ of successors of $f$ in the tree $T$ contains more than one point. We shall prove more: the set $\mathrm{succ}(f)$ is infinite.

Observe that a function $g:[1,n+1]\to\IN$ belongs to $\mathrm{succ}(f)$ if and only if $g|[1,n]=f$, $g(n+1)>g(n)$ and $g(n+1)$ belongs to the set
$$Y_f:=\bigcap_{p\in\Pi_{n+1}}p\IN\cap 
\bigcap_{k\in[1,n]}\big(f(k)+\big((n+1-k)\dag  f(k)\big)\IZ\big).$$ So, it suffices to prove that the set $Y_f$ is infinite. By the Chinese Remainder Theorem~\ref{Chinese}, the set $Y_f$ is infinite if and only if
\begin{enumerate}
\item for every $p\in\Pi_{n+1}$ and $k\in[1,n]$ the number $f(k)$ is divisible by $\gcd\big(p,(n+1-k)\dag  f(k)\big)$;
\item for any numbers $k<l$ in $[1,n]$ the number $f(l)-f(k)$ is divisible by $\gcd\big((n+1-k)\dag  f(k),(n+1-l)\dag  f(l)\big)$.
\end{enumerate}

To verify the first condition, fix any prime number $p\in\Pi_{n+1}$ and any $k\in[1,n]$. We claim that $\gcd(p,(n+1-k)\dag  f(k))=1$. If $p\notin\Pi_k$, then $p$ does not divide $n+1-k$ and hence $\gcd(p,(n+1-k)\dag  f(k))=1$. If $p\in\Pi_k$, then $p\in\Pi_{f(k)}$ and hence $\gcd(p,(n+1-k)\dag  f(k))=1$. In both cases $\gcd(p,(n+1-k)\dag  f(k))=1$ divides $f(k)$.

To verify the second condition, fix any numbers $k<l$ in $[1,n]$.
Let $d$ be the largest common divisor of $(n+1-k)\dag  f(k)$ and $(n+1-l)\dag  f(l)$. Then $d$ is coprime with $f(k)$ and divides both the numbers $n+1-k$ and $n+1-l$, so $d$ divides their difference $(n+1-k)-(n+1-l)=l-k$. Consequently, $d$ divides $(l-k)\dag  f(k)$. Since $f$ is progressive, $f(l)-f(k)$ is divisible by $(l-k)\dag  f(k)$ and hence is divisible by $d$.
\end{proof}

\begin{problem} Is the set $C(\IN_\tau)$ dense in $\IN^\IN$ \textup{(}or in $\IN_\tau^\IN$\textup{)}?
\end{problem}

\section{Homeomorphisms of the Golomb space}

In this section we study homeomorphisms of the Golomb space $\IN_\tau$ and prove the following main theorem.

\begin{theorem}\label{mainH} Each homeomorphism $h:\IN_\tau\to\IN_\tau$ of the Golomb space has the following properties:
\begin{enumerate}
\item $h(1)=1$;
\item $h(\Pi)=\Pi$;
\item $\Pi_{h(x)}=h(\Pi_x)$ for every $x\in\IN$;
\item there exists a multiplicative bijection $\mu$ of $\IN$ such that $h(x^n)=h(x)^{\mu(n)}$ for all $x,n\in\IN$.
\end{enumerate}
\end{theorem}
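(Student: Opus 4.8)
The plan is to deduce all four properties from a single topological fact: that every homeomorphism $h$ permutes the family $\mathcal L:=\{p\IN:p\in\Pi\}$ of sets of multiples of primes. First observe that each $p\IN$ is closed in $\IN_\tau$ (its complement $\bigcup_{i=1}^{p-1}(i+p\IN_0)$ is open) and superconnected (by Proposition~\ref{superconnect}, being of the form $0+p\IN_0$), and that these sets encode divisibility: since $p\IN$ is closed, a point $x$ lies in $p\IN$ iff every neighbourhood of $x$ meets $p\IN$, so $\Pi_x=\{p\in\Pi:x\in p\IN\}$. Moreover $\{1\}=\IN\setminus\bigcup_{p\in\Pi}p\IN$, because $1$ is the unique number with no prime divisor. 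Consequently, once we know that $h(\mathcal L)=\mathcal L$, say $h(p\IN)=\sigma(p)\IN$ for a bijection $\sigma\colon\Pi\to\Pi$, property (1) is immediate ($h$ fixes the complement of $\bigcup\mathcal L$), and the equivalence $x\in p\IN\Leftrightarrow h(x)\in\sigma(p)\IN$ gives $\Pi_{h(x)}=\sigma(\Pi_x)$ for every $x$; this yields (3) and (2) as soon as we also show that $\sigma$ agrees with $h$ on primes.

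The heart of the argument, and the step I expect to be the main obstacle, is therefore the intrinsic topological characterization of $\mathcal L$. Naive extremal descriptions fail: by Proposition~\ref{superconnect} every finite union $\bigcup_{p\in S}p\IN$ and every $d\IN$ with $d$ squarefree is again closed and superconnected, so the members of $\mathcal L$ are neither the maximal nor the minimal closed superconnected proper subsets. Instead I would work inside the meet-semilattice of closed superconnected sets, using $p\IN\cap q\IN=pq\IN$ and more generally $\bigcap_{p\in S}p\IN=(\prod_{p\in S}p)\IN$, and single out the members of $\mathcal L$ as the intersection-irreducible elements of the subfamily $\{d\IN:d\text{ squarefree}\}$ (the coatoms below $\IN=1\cdot\IN$). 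To make this topological one must recover the subfamily $\{d\IN\}$ from the topology alone; here the tools are Lemma~\ref{basic} (which computes $\overline{a+b\IN_0}$ and shows $\bigcap_{p\mid b}p\IN\subset\overline{a+b\IN_0}$), the Chinese Remainder Theorem~\ref{Chinese} and Dirichlet's Theorem~\ref{Dirichlet}, together with the observation that each $d\IN$ is the image $f_d(\IN)$ of the continuous self-map $f_d\colon x\mapsto dx$ (continuous by Theorem~\ref{poly}). Carrying out this identification rigorously, in a manner that is manifestly invariant under $h$, is the crux.

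Granting $h(\mathcal L)=\mathcal L$, I next pin down $h$ on primes and prime powers. For each prime $p$ the set $P_p:=\{x:\Pi_x\subseteq\{p\}\}=\{1,p,p^2,\dots\}$ is topologically determined ($x=1$, or $x\in p\IN$ and $x\notin r\IN$ for every prime $r\neq p$), so $h(P_p)=P_{\sigma(p)}$ and $h$ restricts to a bijection $p^{k}\mapsto\sigma(p)^{\psi_p(k)}$ fixing $1$; thus $h(p)=\sigma(p)^{\psi_p(1)}$. Property (2) amounts to $\psi_p(1)=1$, i.e.\ primes must go to primes rather than to proper prime powers. This I would obtain either from the self-similarity of the construction (inside the subspace $p\IN$ the generator $p$ plays the role of the unit $1$, and the characterization of the previous paragraph, applied relatively to $p\IN$, forces $h(p)=\sigma(p)$) or from a perfect-power obstruction based on Lemma~\ref{l:frob}; settling $\psi_p(1)=1$ is the delicate point here and, once available, gives (2) and completes (3).

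Finally, for (4) I would fix a prime $p_0$, set $\mu:=\psi_{p_0}$, and prove $h(x^n)=h(x)^{\mu(n)}$ for all $x$ with this single $\mu$. The key leverage is that each power map $g_n\colon x\mapsto x^n$ is a continuous self-map of $\IN_\tau$ (Theorem~\ref{poly}), whence $F_n:=h\circ g_n\circ h^{-1}$ is continuous and satisfies $F_n\circ F_m=F_{nm}$; since $\Pi$ is dense and $\IN_\tau$ is Hausdorff, it suffices to show that $F_n$ agrees with $g_{\mu(n)}$ on $\Pi$, that is, that $\psi_p(n)$ is independent of the prime $p$. Base-independence is thus the remaining obstacle; I expect to establish it by first showing that $h$ is multiplicative on coprime arguments (which follows from $\Pi_{h(x)}=h(\Pi_x)$ together with the $p$-part analysis above) and then comparing $h((pq)^n)$ with $h(p^n)$ and $h(q^n)$ across distinct primes. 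Once $\psi_p=\mu$ for all $p$, the identity $h(x^n)=h(x)^{\mu(n)}$ holds for every base, and applying it to $x^m$ via $(x^m)^n=x^{mn}$ yields $\mu(mn)=\mu(m)\mu(n)$; bijectivity of $\mu$ is clear since $h$ maps $P_{p_0}\setminus\{1\}$ bijectively onto $P_{\sigma(p_0)}\setminus\{1\}$.
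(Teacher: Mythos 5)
Your outline identifies the right intermediate targets, but at each of the three places where the real work happens you either defer the argument or substitute a claim that does not hold, so as it stands this is a plan rather than a proof. First, the ``crux'' you flag --- recovering the family $\{p\IN:p\in\Pi\}$ (equivalently the divisibility structure) from the topology --- is left open, and your proposed route is circular: characterizing $p\IN$ as an intersection-irreducible member of $\{d\IN: d\text{ squarefree}\}$ presupposes that this subfamily has already been identified topologically, and describing $d\IN$ as the image of $x\mapsto dx$ is not a topological description. The paper resolves this by working with the filter $\F_0$ generated by finite intersections $\overline{U_1}\cap\dots\cap\overline{U_n}$ of closures of nonempty open sets (which by Lemmas~\ref{basic} and~\ref{special1} has base $\{q\IN:q\text{ squarefree}\}$ and is manifestly $h$-invariant), uses it to get $h(1)=1$, and then anchors relative filters $\F_x$ at the fixed point $1$ to recover $\Pi_x$ and hence the divisibility preorder (Lemmas~\ref{l:inter}--\ref{sigma2}). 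Second, the step $\psi_p(1)=1$ (primes go to primes) is again only gestured at; neither ``self-similarity'' nor Lemma~\ref{l:frob} (which concerns closedness of $\{x^2+nx\}$) delivers it. The paper's proof goes through Lemma~\ref{l:monogen}, $h(a^{\IN})=h(a)^{\IN}$, whose proof is a genuinely delicate combination of the continuity of $h\circ(x\mapsto x^n)\circ h^{-1}$, Dirichlet's theorem applied to the progression $b+(b^k-1)d\IN_0$, and a size estimate modulo $b^k-1$; nothing of comparable strength appears in your sketch.

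Third, and this is an outright error rather than an omission: you assert that $h$ is multiplicative on coprime arguments ``follows from $\Pi_{h(x)}=h(\Pi_x)$ together with the $p$-part analysis.'' It does not: knowing the prime supports of $h(x)$, $h(y)$ and $h(xy)$ says nothing about the exponents, and multiplicativity of $h$ is not established anywhere in the paper --- it is essentially the open rigidity problem. Consequently your mechanism for proving base-independence of the exponent bijections $\psi_p$ collapses. The paper instead proves that $n\mapsto\mu_a(n)$ is locally constant in $a$ at each prime $n$ using Lemma~\ref{l:Brunault} (a Grunwald--Wang-type statement producing primes $p\in1+q\IN$ with $a^{(p-1)/q}\not\equiv1\bmod p$), and then globalizes by a chain argument using connectedness; this is the second substantial number-theoretic input and is absent from your proposal. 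Your one genuinely nice observation --- that once $\psi_p(n)$ is known to be independent of the prime $p$, the identity $h(x^n)=h(x)^{\mu(n)}$ for all $x$ follows by comparing the continuous maps $h\circ g_n\circ h^{-1}$ and $g_{\mu(n)}$ on the dense Hausdorff-separated set $\Pi$ --- would slightly streamline the paper's separate treatment of perfect powers in Lemma~\ref{l:finall}, but it does not touch the missing core.
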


We recall that for a number $x\in\IN$ by $\Pi_x$ the set of all prime divisors of $x$ is denoted. Also by $\tau$ we denote the topology of the Golomb space $\IN_\tau$ and by $\tau_x$ the family of open neighborhoods of a point $x\in\IN$ in the Golomb space $\IN_\tau$. A function $\mu:\IN\to\IN$ is {\em multiplicative} if $\mu(xy)=\mu(x){\cdot}\mu(y)$ for any $x,y\in\IN$. 

The four items of Theorem~\ref{mainH} are proved in Lemmas~\ref{fix1}, \ref{Pi}, \ref{final3}, and \ref{l:finall}, respectively.

The superconnectedness of the Golomb space implies that the family $$\F_0=\Big\{F\subset\IN\colon\exists U_1,\dots,U_n\in\tau\setminus\{\emptyset\}\mbox{ with }\bigcap_{i=1}^n\overline{U_i}\subset F\Big\}$$
is a filter on $\IN$.

The definition of $\F_0$ implies that this filter is preserved by any homeomorphism $h$ of $\IN_\tau$ (which means that the filter $h[\F_0]:=\{h(F):F\in\F_0\}$ coincides with $\F_0$).

\begin{lemma}\label{filter0} The filter $\F_0$ is generated by the base consisting of the sets $q\IN$ for a square-free number $q\in\IN$.
\end{lemma}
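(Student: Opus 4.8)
The plan is to verify the two defining properties of a filter base for $\F_0$: (a) every set $q\IN$ with $q$ square-free belongs to $\F_0$, and (b) every member of $\F_0$ contains some such set $q\IN$.

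For (b) the key tool is Lemma~\ref{basic}. First I would recall that any nonempty open set $U$ contains a basic progression $a+b\IN_0$ with $\gcd(a,b)=1$, so that $\overline U\supset\overline{a+b\IN_0}$. Since $p\IN\subset p\IN\cup(a+p^{l_p(b)}\IZ)$ for every $p\in\Pi_b$, Lemma~\ref{basic} yields $\overline{a+b\IN_0}\supset\bigcap_{p\in\Pi_b}p\IN=\big(\prod_{p\in\Pi_b}p\big)\IN$, a progression whose modulus is square-free. Thus for any $U_1,\dots,U_n\in\tau\setminus\{\emptyset\}$, choosing inside each $U_i$ a basic progression $a_i+b_i\IN_0$, one obtains $\bigcap_{i=1}^n\overline{U_i}\supset q\IN$, where $q$ is the least common multiple of the square-free radicals $\prod_{p\in\Pi_{b_i}}p$; as an l.c.m.\ of square-free numbers, $q$ is again square-free. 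Hence every $F\in\F_0$ contains $q\IN$ for some square-free $q$.

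The harder direction is (a): realizing each $q\IN$ as a finite intersection of closures of nonempty open sets. I would first treat a single prime $p$ by the explicit choice $U_p=1+p^2\IN_0$ and $V_p=(1+p)+p^2\IN_0$; both are nonempty basic progressions, since $\gcd(1,p^2)=1$ and $\gcd(1+p,p^2)=1$. Applying Lemma~\ref{basic} and distributing the intersection, the off-diagonal term $(1+p^2\IZ)\cap\big((1+p)+p^2\IZ\big)$ is empty (because $p\not\equiv0\pmod{p^2}$), which collapses the two closures onto $\overline{U_p}\cap\overline{V_p}=p\IN$. The point of using the modulus $p^2$ rather than $p$ is uniformity: for $p=2$ there are too few residues modulo $p$, whereas the residues $1$ and $1+p$ modulo $p^2$ are always distinct. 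Finally, for a square-free $q=p_1\cdots p_k$ I would write $q\IN=\bigcap_{i=1}^k p_i\IN=\bigcap_{i=1}^k\big(\overline{U_{p_i}}\cap\overline{V_{p_i}}\big)$, exhibiting $q\IN$ as a finite intersection of closures and hence as a member of $\F_0$.

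Together (a) and (b) show that $\{q\IN:q\text{ square-free}\}$ is a base of $\F_0$. The main obstacle is the construction in (a); once the identity $\overline{U_p}\cap\overline{V_p}=p\IN$ is established, everything else reduces to routine bookkeeping with Lemma~\ref{basic}.
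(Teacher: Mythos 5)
Your proof is correct and follows essentially the same route as the paper: both directions rest on Lemma~\ref{basic}, and your pair $1+p^2\IN_0$, $(1+p)+p^2\IN_0$ is an explicit instance of the construction the paper uses in Lemma~\ref{special1} (there $x+q^n\IN_0$ and $y+q^n\IN_0$ with offsets incongruent modulo each $p^n$). The only difference is that you work prime by prime and intersect $2k$ closures, whereas the paper exhibits $q\IN$ as the intersection of just two closures --- a sharper form it needs later for the equivalence in Lemma~\ref{special1}.
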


\begin{proof} Lemma~\ref{basic} implies that each element $F\in\F_0$ contains the set $q\IN$ for some square-free number $q$.
It remains to show that for each square-free number $q>1$ the set $q\IN$ is contained in the filter $\F_0$. This is proved in the following lemma.
\end{proof}

\begin{lemma}\label{special1} For two distinct numbers $x,y\in \IN$ and a square-free number $q>1$ the following conditions are equivalent:
\begin{enumerate}
\item $q$ is coprime with $x$ and $y$;
\item there are open sets $U_x\in\tau_x$ and $U_y\in\tau_y$ such that $\overline{U_x}\cap\overline{U_y}= q\IN$;
\item there are open sets $U_x\in\tau_x$ and $U_y\in\tau_y$ such that $\overline{U_x}\cap\overline{U_y}\subset q\IN$.
\end{enumerate}
\end{lemma}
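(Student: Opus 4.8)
The plan is to prove the cyclic chain $(1)\Rightarrow(2)\Rightarrow(3)\Rightarrow(1)$. The implication $(2)\Rightarrow(3)$ is immediate, so the real content lies in the construction for $(1)\Rightarrow(2)$ and the contrapositive analysis of $(3)\Rightarrow(1)$. Throughout I would reduce to \emph{basic} neighborhoods: since every $U\in\tau_z$ contains a basic set $z+d\IN_0$ with $\gcd(z,d)=1$, and since passing to a smaller neighborhood only shrinks the closure, it is harmless to write $U_x=x+B\IN_0$ and $U_y=y+C\IN_0$ with $\gcd(x,B)=\gcd(y,C)=1$. Lemma~\ref{basic} then describes the two closures explicitly as
$$\overline{U_x}=\IN\cap\bigcap_{p\in\Pi_B}\big(p\IN\cup(x+p^{l_p(B)}\IZ)\big),\qquad \overline{U_y}=\IN\cap\bigcap_{p\in\Pi_C}\big(p\IN\cup(y+p^{l_p(C)}\IZ)\big),$$
and the whole argument becomes a bookkeeping of congruence conditions prime by prime.

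For $(1)\Rightarrow(2)$ I would exhibit the neighborhoods directly. Put $B:=\prod_{p\in\Pi_q}p^{\,l_p(|x-y|)+1}$ and take $U_x:=x+B\IN_0$ and $U_y:=y+B\IN_0$; these are legitimate basic neighborhoods because $\Pi_B=\Pi_q$ and, by $(1)$, $q$ is coprime with both $x$ and $y$. Since $\Pi_B=\Pi_q$ and $q$ is square-free, every $z\in q\IN$ is divisible by each $p\in\Pi_B$, so the first alternative $p\IN$ of Lemma~\ref{basic} holds at every prime, giving $q\IN\subset\overline{U_x}\cap\overline{U_y}$. Conversely, if some $z$ in the intersection fails to be divisible by a prime $p\in\Pi_q$, then at that prime both congruences $z\equiv x$ and $z\equiv y\pmod{p^{\,l_p(|x-y|)+1}}$ must hold, forcing $x\equiv y\pmod{p^{\,l_p(|x-y|)+1}}$; this contradicts $p^{\,l_p(|x-y|)+1}\nmid(x-y)$, which is why the exponent is chosen one larger than $l_p(|x-y|)$. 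Hence every $z$ in the intersection is divisible by each $p\in\Pi_q$, i.e.\ by $q$, and we obtain the exact equality $\overline{U_x}\cap\overline{U_y}=q\IN$.

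For $(3)\Rightarrow(1)$ I would argue by contraposition. Suppose $q$ is \emph{not} coprime with $x$ (the case of $y$ is symmetric) and fix a prime $p\in\Pi_q\cap\Pi_x$; I claim that for every basic pair $U_x,U_y$ the intersection $\overline{U_x}\cap\overline{U_y}$ contains a point not divisible by $p$, hence (as $p\mid q$) not in $q\IN$, which refutes $(3)$. First note $p\notin\Pi_B$, since $\gcd(x,B)=1$ while $p\mid x$. Using the Chinese Remainder Theorem~\ref{Chinese} I would produce $z\in\IN$ with $z\equiv0\pmod r$ for every prime $r\in(\Pi_B\cup\Pi_C)\setminus\{p\}$, together with $z\equiv y\pmod{p^{l_p(C)}}$ if $p\in\Pi_C$ (and $z\equiv1\pmod p$ otherwise); the moduli are pairwise coprime, so a positive $z$ exists. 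By Lemma~\ref{basic} this $z$ lies in $\overline{U_x}$ (each $r\in\Pi_B$ divides $z$) and in $\overline{U_y}$ (each $r\in\Pi_C\setminus\{p\}$ divides $z$, while for $r=p$ one has $z\equiv y\pmod{p^{l_p(C)}}$), yet $p\nmid z$ because $p\nmid y$. Thus $z\in(\overline{U_x}\cap\overline{U_y})\setminus q\IN$, the desired contradiction.

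The routine parts are the two verifications that the set $q\IN$, respectively the constructed point $z$, satisfies the prime-by-prime membership conditions of Lemma~\ref{basic}. The one genuinely delicate point is in $(3)\Rightarrow(1)$: one cannot simply take $z$ divisible by all primes of $B$ and $C$ (the device used for superconnectedness in Proposition~\ref{superconnect}), because the offending prime $p$ may lie in $\Pi_C$, where such a choice would force $p\mid z$. The fix is to switch, at that single prime $p$, from the alternative $p\IN$ to the alternative $y+p^{l_p(C)}\IZ$ in Lemma~\ref{basic}, which keeps $z$ inside $\overline{U_y}$ while guaranteeing $p\nmid z$; getting this one coordinate right is the crux of the lemma.
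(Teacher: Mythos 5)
Your proposal is correct and follows essentially the same route as the paper: for $(1)\Rightarrow(2)$ the paper takes $U_x=x+q^n\IN_0$, $U_y=y+q^n\IN_0$ with a uniform exponent $n$ chosen so that $p^n\nmid(x-y)$ for all $p\in\Pi_q$ (your per-prime exponents $l_p(|x-y|)+1$ achieve the same thing), and for $(3)\Rightarrow(1)$ the paper likewise reduces to basic neighborhoods and applies the Chinese Remainder Theorem with exactly your case distinction on whether the offending prime $p$ divides the modulus of $U_y$, using the congruence $z\equiv y\pmod{p^{l_p(d)}}$ in that case. The one point you single out as delicate is indeed the crux, and your handling of it matches the paper's.
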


\begin{proof} To prove the implication $(1)\Rightarrow (2)$, assume that a square-free number $q>1$ is coprime with $x$ and $y$. Choose $n\in\IN$ so large that for any prime number $p\in\Pi_q$ the difference $x-y$ is not divided by $p^n$. Then for the neighborhoods $U_x=x+q^n\IN_0$ and $U_y=y+q^n\IN_0$ we get
$$\overline{U}_x\cap\overline{U}_y=\overline{x+q^n\IN_0}\cap\overline{y+q^n\IN_0}=
\IN\cap\bigcap_{p\in\Pi_q}\big(p\IN\cup(x+p^n\IZ)\big)\cap\big(p\IN\cup (y+p^n\IZ)\big)=\bigcap_{p\in\Pi_q}p\IN=q\IN.$$
The implication $(2)\Ra(3)$ is trivial. To prove the implication $(3)\Ra(1)$, choose two open sets $U_x\in\tau_x$ and $U_y\in\tau_y$ with $\overline{U_x}\cap\overline{U_y}\subset q\IN$. 
We can assume that the open sets $U_x$ and $U_y$ are of basic form $U_x=x+b\IN_0$ and $U_y=y+d\IN$. 
To derive a contradiction, assume that $q$ has a common prime divisor $p$ with $x$ or $y$. Without loss of generality $p\in \Pi_x$ and hence $p\notin \Pi_{b}$. If also $p\notin\Pi_{d}$, then by the Chinese Remainder Theorem and Lemma~\ref{basic}, 
$$\emptyset\ne (1+p\IN)\cap\bigcap_{r\in\Pi_{b}\cup\Pi_d}r\IN\subset\overline{U_x}\cap\overline{U_y}\setminus q\IN.$$ If $p\in\Pi_d$, then $p\notin\Pi_y$   and by the Chinese Remainder Theorem and Lemma~\ref{basic}, 
$$\emptyset\ne (y+p^{l_p(d)}\IN)\cap\bigcap_{r\in\Pi_{b}\cup\Pi_d\setminus\{p\}}r\IN\subset\overline{U}_x\cap\overline{U}_y\setminus q\IN.$$In both cases we obtain a contradiction with our assumption $\overline{U}_x\cap\overline{U}_y\subset q\IN$.
\end{proof}

\begin{lemma}\label{fix1} The number $1$ is a fixed point of any homeomorphism $h$ of the Golomb space $\IN_\tau$.
\end{lemma}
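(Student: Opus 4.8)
The plan is to exhibit a homeomorphism-invariant description of the one-point set $\{1\}$. The starting point is the purely arithmetic identity $\IN\setminus\{1\}=\bigcup_{p\in\Pi}p\IN$, valid because a positive integer exceeds $1$ exactly when it has a prime divisor. Hence, writing $\mathcal P=\{p\IN:p\in\Pi\}$, it suffices to prove that every homeomorphism $h$ preserves the family $\mathcal P$: once $h[\mathcal P]=\mathcal P$ we get $h\big(\bigcup\mathcal P\big)=\bigcup\mathcal P$, so $h$ preserves $\IN\setminus\{1\}$ and therefore its complement, which forces $h(1)=1$.

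To see that $\mathcal P$ is invariant I would describe it intrinsically. Consider the family $\mathcal I=\{\overline{U_x}\cap\overline{U_y}:x\ne y,\ U_x\in\tau_x,\ U_y\in\tau_y\}$ of intersections of closures of neighborhoods of two distinct points. Since a homeomorphism carries closures to closures and maps $\tau_x$ onto $\tau_{h(x)}$ while preserving $x\ne y$, the family $\mathcal I$ satisfies $h[\mathcal I]=\mathcal I$; as nowhere-density and maximality with respect to inclusion are topological notions, the subfamily of inclusion-maximal nowhere dense members of $\mathcal I$ is $h$-invariant as well. I claim this subfamily is exactly $\mathcal P$, which then yields $h[\mathcal P]=\mathcal P$ and completes the argument.

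The heart of the proof is therefore the structural statement that the maximal nowhere dense members of $\mathcal I$ are precisely the sets $p\IN$. Here I would use Lemma~\ref{basic}: for a basic neighborhood $x+b\IN_0$ the closure $\overline{x+b\IN_0}$ is, via the Chinese Remainder Theorem, a finite union of residue classes modulo $b$, namely one class coprime to $b$ (an open, hence somewhere dense, class through $x$) together with classes sharing a factor with $b$ (nowhere dense classes lying in $\bigcup_{p\mid b}p\IN$). Consequently $S=\overline{U_x}\cap\overline{U_y}$ is nowhere dense if and only if the two coprime classes have no common solution; and this incompatibility forces a prime $p$ dividing both moduli at which the classes disagree, whence every element of $S$ must be divisible by $p$, i.e. $S\subseteq p\IN$. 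Thus every nowhere dense member of $\mathcal I$ lies below some $p\IN$. Conversely each $p\IN$ is nowhere dense and belongs to $\mathcal I$ (by Lemma~\ref{special1} with $q=p$), and any nowhere dense member of $\mathcal I$ containing $p\IN$ collapses to it: being nowhere dense it is contained in some $p'\IN$, and $p\IN\subseteq p'\IN$ gives $p'=p$. Hence the $p\IN$ are exactly the maximal elements.

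I expect the main obstacle to be exactly this structural analysis, i.e. converting Lemma~\ref{basic} into the clean dichotomy ``either $\overline{U_x}\cap\overline{U_y}$ has nonempty interior, or it is contained in a single $p\IN$'', and in particular reducing arbitrary open neighborhoods to basic ones so that the residue-class bookkeeping applies. By contrast the reduction step ($\IN\setminus\{1\}=\bigcup\mathcal P$) and the invariance of $\mathcal I$ are routine given the results already established.
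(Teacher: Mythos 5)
Your overall strategy is genuinely different from the paper's (you try to give an intrinsic topological description of the family $\{p\IN:p\in\Pi\}$ and then use $\IN\setminus\{1\}=\bigcup_{p\in\Pi}p\IN$), and the reduction step and the $h$-invariance of $\mathcal I$ are indeed routine. But the structural claim at the heart of your argument is false: the sets $p\IN$ are \emph{not} the inclusion-maximal nowhere dense members of $\mathcal I$, because the dichotomy ``$\overline{U_x}\cap\overline{U_y}$ has nonempty interior or is contained in a single $p\IN$'' fails once $U_x,U_y$ are allowed to be non-basic. Concretely, take distinct primes $p,q>3$ and, by the Chinese Remainder Theorem, numbers $a\equiv c\equiv 3\pmod{p^2}$ with $a\equiv 1\pmod{q^2}$ and $c\equiv 2\pmod{q^2}$ (so $a,c$ are coprime with $p^2q^2$), and put $U_1=(1+p^2\IN_0)\cup(a+p^2q^2\IN_0)\in\tau_1$ and $U_2=(2+p^2\IN_0)\cup(c+p^2q^2\IN_0)\in\tau_2$. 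Since closure commutes with finite unions, Lemma~\ref{basic} shows that the three pairs of basic pieces whose residues disagree modulo $p^2$ contribute subsets of $p\IN$, while the pair $\big(a+p^2q^2\IN_0,\;c+p^2q^2\IN_0\big)$ contributes $\big(p\IN\cup(a+p^2\IZ)\big)\cap q\IN$; altogether $\overline{U_1}\cap\overline{U_2}=\IN\cap\big(p\IN\cup(q\IN\cap(a+p^2\IZ))\big)$. This set lies in $p\IN\cup q\IN$, hence is nowhere dense, it belongs to $\mathcal I$, and it \emph{properly} contains $p\IN$ while also containing infinitely many numbers coprime with $p$. A symmetric variant (adding a piece of the same kind on the $p$-side) produces nowhere dense members of $\mathcal I$ contained in no $r\IN$ whatsoever. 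The root of the problem is the one you flagged: your residue-class bookkeeping is valid only for basic neighborhoods, and replacing $U_x,U_y$ by basic subneighborhoods only \emph{shrinks} $\overline{U_x}\cap\overline{U_y}$, so it can never deliver the upper bound $\overline{U_x}\cap\overline{U_y}\subset p\IN$ that your maximality analysis requires.

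This is precisely the trap the paper's proof is built to avoid. Lemma~\ref{special1} characterizes, for a \emph{fixed} pair of distinct points $x,y$, those squarefree $q$ for which \emph{some} neighborhoods satisfy the one-sided containment $\overline{U_x}\cap\overline{U_y}\subset q\IN$; this condition is monotone under shrinking neighborhoods, so the reduction to basic sets there is legitimate, and it yields the topological invariant ``$q$ is coprime with $x$ and $y$'' rather than an intrinsic description of the individual sets $p\IN$. The paper then combines this with the filter $\F_0$ (Lemma~\ref{filter0}): assuming $h(1)=x\ne1$ and $p\in\Pi_x$, it pulls $p\IN$ back to some $q\IN\subset h^{-1}(p\IN)$, applies Lemma~\ref{special1} at the pair $(1,z)$ for $z$ coprime with $q$, pushes forward to get $\overline{h(U_z)}\cap\overline{h(U_1)}\subset p\IN$, and concludes $p\notin\Pi_x$, a contradiction. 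To salvage your approach you would have to replace ``maximal nowhere dense member of $\mathcal I$'' by an invariant built from such one-sided containments (essentially the filters $\F_x$ of the paper), at which point you are reproducing the paper's argument.
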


\begin{proof} To derive a contradiction, assume that $x=h(1)$ is not equal to $1$.  Fix any $p\in\Pi_x$. By Lemma~\ref{filter0}, the set $p\IN$ belongs to the filter $\F_0$. Since the filter $\F_0$ is invariant under homeomorphisms of $\IN_\tau$, $h^{-1}(p\IN)\in\F_0$. By Lemma~\ref{filter0}, there exists a square-free number $q$ such that $q\IN\subset h^{-1}(p\IN)$. Choose any $z\in\IN\setminus\{1\}$ coprime with $q$. By Lemma~\ref{special1}, the Golomb space $\IN_\tau$ contains open sets  $U_z\ni z$ and $U_1\ni 1$ such that $\overline{U_z}\cap\overline{ U_1}\subset q\IN$. Then for the point $y=h(z)$, the sets $V_y=h(U_z)\ni y$ and $V_x=h(U_1)\ni x$ have the property $\overline{V_y}\cap\overline{V_x}\subset h(q\IN)\subset p\IN$. Now Lemma~\ref{special1} implies that $p\notin\Pi_x$, which contradicts the choice of $p$.
\end{proof}

For any point $x\in\IN$ consider the filter $$\F_x=\{F\subset\IN:\exists U_x\in\tau_x,\;\exists U_1\in\tau_1\mbox{ such that }\overline{U}_x\cap\overline{ U}_1\subset F\}.$$
The definition of the filter $\F_x$ and Lemma~\ref{special1}
imply

\begin{lemma}\label{l:inter} For any point $x\in\IN$ we get 
\begin{enumerate}
\item $\Pi_x=\{p\in\Pi:p\IN\notin\F_x\}$;
\item $\F_x=\{F\subset\IN:\exists a,b\in\IN\mbox{ such that $\Pi_b\cap\Pi_x=\emptyset$ and $\overline{1+a\IN}\cap\overline{x+b\IN}\subset F$}\}$.
\end{enumerate}
\end{lemma}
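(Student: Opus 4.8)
The plan is to read both items directly off the definition of the filter $\F_x$, using Lemma~\ref{special1} with the second point taken to be $1$. Throughout I assume $x\ne 1$; the point $x=1$ is degenerate for item~(1), since $1\in\overline{U}\cap\overline{V}$ for any neighbourhoods $U,V\in\tau_1$ forces $p\IN\notin\F_1$ for every prime $p$, while $\Pi_1=\emptyset$.

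For item~(1) I would fix a prime $p$ and simply unwind membership in $\F_x$: by definition $p\IN\in\F_x$ means that there are $U_x\in\tau_x$ and $U_1\in\tau_1$ with $\overline{U_x}\cap\overline{U_1}\subset p\IN$, which is verbatim condition~(3) of Lemma~\ref{special1} for the two distinct numbers $x,1$ and the square-free number $q=p>1$. The equivalence $(1)\Leftrightarrow(3)$ of that lemma then says this happens precisely when $p$ is coprime with $x$ and with $1$; as every prime is coprime with $1$, it happens iff $p$ does not divide $x$, i.e. $p\notin\Pi_x$. Hence the set of primes $p$ with $p\IN\notin\F_x$ is exactly the set of primes dividing $x$, which is $\Pi_x$.

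For item~(2), writing $\mathcal G$ for the right-hand family, the inclusion $\mathcal G\subset\F_x$ is immediate: given $a,b$ with $\Pi_b\cap\Pi_x=\emptyset$, the sets $U_1=1+a\IN_0$ and $U_x=x+b\IN_0$ are basic neighbourhoods of $1$ and $x$ (the latter because $\gcd(x,b)=1$), so they witness membership in $\F_x$. For the reverse inclusion I would use that every neighbourhood contains a basic one: starting from $U_x\in\tau_x$, $U_1\in\tau_1$ with $\overline{U_x}\cap\overline{U_1}\subset F$, choose basic neighbourhoods $x+b\IN_0\subset U_x$ (automatically with $\gcd(x,b)=1$, so $\Pi_b\cap\Pi_x=\emptyset$) and $1+a\IN_0\subset U_1$, and use monotonicity of closure to get $\overline{x+b\IN_0}\cap\overline{1+a\IN_0}\subset\overline{U_x}\cap\overline{U_1}\subset F$.

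The only genuine point to watch — and the main (if mild) obstacle — is the mismatch between the half-open progressions $c+d\IN_0$ that generate $\tau$ and the progressions $c+d\IN$ appearing in~(2). I would dispose of it by observing that the base point $c$ always lies in $\overline{c+d\IN}$ (any basic neighbourhood $c+e\IN_0$ of $c$ meets $c+d\IN$ in the point $c+de$), so that $\overline{c+d\IN}=\overline{c+d\IN_0}$; applying this identity with $(c,d)=(1,a)$ and $(c,d)=(x,b)$ rewrites the basic-neighbourhood closures above in the exact form required by $\mathcal G$, closing both inclusions and finishing item~(2).
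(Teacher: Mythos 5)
Your proof is correct and follows exactly the route the paper intends: the paper gives no written proof, stating only that the lemma follows from the definition of $\F_x$ and Lemma~\ref{special1}, and your argument is precisely that deduction carried out in detail (including the closure identity $\overline{c+d\IN}=\overline{c+d\IN_0}$ needed to reconcile the two forms of progression). Your observation that item~(1) literally fails at $x=1$ (where $\Pi_1=\emptyset$ yet $p\IN\notin\F_1$ for every prime $p$, since $1\in\overline{U}\cap\overline{V}$ for all $U,V\in\tau_1$) is a valid catch of an edge case the paper glosses over and is harmless, as the lemma is only invoked for $x\ne 1$.
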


The interplay between $\Pi_x$ and $\F_x$ from Lemma~\ref{l:inter} implies

\begin{lemma}\label{equiv} For two numbers $x,y\in\IN\setminus\{1\}$ the following conditions are equivalent:
\begin{enumerate}
\item $\F_x\subset\F_y$;
\item $\Pi_y\subset\Pi_x$.
\end{enumerate}
\end{lemma}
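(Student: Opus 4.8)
The plan is to prove the two implications separately, using the descriptions of $\Pi_x$ and $\F_x$ supplied by Lemma~\ref{l:inter}. The implication $(1)\Ra(2)$ is immediate from Lemma~\ref{l:inter}(1): assuming $\F_x\subset\F_y$, I would take any prime $p\in\Pi_y$; by Lemma~\ref{l:inter}(1) this means $p\IN\notin\F_y$, and the inclusion $\F_x\subset\F_y$ forces $p\IN\notin\F_x$, i.e.\ $p\in\Pi_x$. Hence $\Pi_y\subset\Pi_x$.

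For the converse $(2)\Ra(1)$ I would take an arbitrary $F\in\F_x$ and use Lemma~\ref{l:inter}(2) to fix $a,b\in\IN$ with $\Pi_b\cap\Pi_x=\emptyset$ and $S:=\overline{1+a\IN}\cap\overline{x+b\IN}\subset F$. Since $\Pi_y\subset\Pi_x$, the condition $\Pi_b\cap\Pi_x=\emptyset$ upgrades to $\Pi_b\cap\Pi_y=\emptyset$, so $b$ is coprime with $y$. Because $\F_y$ is upward closed, it suffices to build $a',b'\in\IN$ with $\Pi_{b'}\cap\Pi_y=\emptyset$ and $\overline{1+a'\IN}\cap\overline{y+b'\IN}\subset S$; Lemma~\ref{l:inter}(2) applied to $y$ then yields $F\in\F_y$. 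The point is that I do not try to reproduce $F$ itself, only to slip one more basic closure-intersection inside $S$.

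The heart of the matter is this construction, for which I would first read off $S$ from Lemma~\ref{basic}: a point $z$ lies in $S$ iff $z\in p\IN$ or $z\in 1+p^{l_p(a)}\IZ$ for each $p\in\Pi_a$, and $z\in p\IN$ or $z\in x+p^{l_p(b)}\IZ$ for each $p\in\Pi_b$. The conditions at the primes $p\in\Pi_a$ I would copy verbatim by making $a'$ divisible by $p^{l_p(a)}$. The conditions at $p\in\Pi_b$ form the only delicate point, and I expect it to be the main obstacle: a neighbourhood of $y$ supplies only the congruence $z\in y+p^{l_p(b')}\IZ$, centred at $y$ instead of $x$, so the residue $x$ cannot be matched directly. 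I would bypass this by forcing $p\mid z$ for every $p\in\Pi_b$, which fulfils the corresponding $S$-condition through its $p\IN$ alternative and makes the value of $x$ irrelevant; here the hypothesis $y\ne 1$ is used, since it provides for each $p\in\Pi_b$ an exponent $m_p$ with $p^{m_p}\nmid(y-1)$, and then placing $p^{m_p}$ into both $a'$ and $b'$ makes the requirements $z\in 1+p^{m_p}\IZ$ and $z\in y+p^{m_p}\IZ$ incompatible unless $p\mid z$. With $a'=\prod_{p\in\Pi_a\setminus\Pi_b}p^{l_p(a)}\cdot\prod_{p\in\Pi_b}p^{m_p}$ and $b'=\prod_{p\in\Pi_b}p^{m_p}$ one checks directly that every $z\in\overline{1+a'\IN}\cap\overline{y+b'\IN}$ satisfies all defining conditions of $S$, so this intersection lies in $S\subset F$, while $\Pi_{b'}=\Pi_b$ is coprime with $y$; this finishes the inclusion $\F_x\subset\F_y$.
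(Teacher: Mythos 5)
Your proof is correct. The paper offers no explicit argument for this lemma beyond asserting that it follows from Lemma~\ref{l:inter}; your derivation of $(1)\Rightarrow(2)$ from Lemma~\ref{l:inter}(1) is exactly the intended one, and your explicit witnesses $a'=\prod_{p\in\Pi_a\setminus\Pi_b}p^{l_p(a)}\cdot\prod_{p\in\Pi_b}p^{m_p}$ and $b'=\prod_{p\in\Pi_b}p^{m_p}$ for $(2)\Rightarrow(1)$ --- which force $p\mid z$ at every $p\in\Pi_b$ via the incongruence $y\not\equiv 1\pmod{p^{m_p}}$ (the only place $y\ne 1$ is needed) and thereby absorb the conditions centred at $x$ --- correctly fill in the step the paper leaves to the reader.
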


\begin{lemma}\label{order} For every homeomorphism $h$ of the Golomb space $\IN_\tau$, and any numbers $x,y\in\IN\setminus\{1\}$ with $\Pi_x\subset\Pi_y$ we get  $\Pi_{h(x)}\subset\Pi_{h(y)}$.
\end{lemma}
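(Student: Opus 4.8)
The plan is to exploit the fact that the filter $\F_x$ is a topological invariant attached to the pair $(x,1)$, and that a homeomorphism fixing $1$ therefore transports $\F_x$ to $\F_{h(x)}$. The whole lemma will then reduce to bookkeeping with Lemma~\ref{equiv}, which translates inclusions between the filters $\F_x$ into (reversed) inclusions between the sets $\Pi_x$.

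The first and main step is to establish the identity $h[\F_x]=\F_{h(x)}$ for every $x\in\IN$. Given $F\in\F_x$, pick $U_x\in\tau_x$ and $U_1\in\tau_1$ with $\overline{U_x}\cap\overline{U_1}\subset F$. Since $h$ is a homeomorphism it carries open sets to open sets, commutes with closure ($h(\overline{A})=\overline{h(A)}$) and, being injective, with intersection ($h(A\cap B)=h(A)\cap h(B)$). Hence $\overline{h(U_x)}\cap\overline{h(U_1)}=h(\overline{U_x}\cap\overline{U_1})\subset h(F)$, where $h(U_x)$ is an open neighborhood of $h(x)$ and $h(U_1)$ is an open neighborhood of $h(1)=1$ by Lemma~\ref{fix1}. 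This exhibits $h(F)\in\F_{h(x)}$, so $h[\F_x]\subset\F_{h(x)}$; applying the same reasoning to the homeomorphism $h^{-1}$ gives the reverse inclusion, and hence equality. Here the hypothesis $h(1)=1$ is used crucially: it guarantees that the distinguished point $1$ occurring in the definition of $\F_x$ is again a neighborhood-center for $1$ after applying $h$, rather than for some other point.

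With this invariance in place the conclusion follows. Because $x,y\ne 1$, injectivity of $h$ together with $h(1)=1$ gives $h(x),h(y)\ne 1$, so Lemma~\ref{equiv} is applicable to all four points involved. From the hypothesis $\Pi_x\subset\Pi_y$, Lemma~\ref{equiv} (read with its two variables interchanged) yields $\F_y\subset\F_x$. Applying the bijection $h$ preserves this inclusion of image filters, so $\F_{h(y)}=h[\F_y]\subset h[\F_x]=\F_{h(x)}$, and a second application of Lemma~\ref{equiv} converts $\F_{h(y)}\subset\F_{h(x)}$ into the desired $\Pi_{h(x)}\subset\Pi_{h(y)}$.

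The argument is essentially routine once $h[\F_x]=\F_{h(x)}$ is known; the only genuine care required is to invoke the \emph{contravariant} direction of Lemma~\ref{equiv} consistently, and to keep the auxiliary point $1$ in the definition of $\F_x$ anchored at $1$ throughout — which is exactly what Lemma~\ref{fix1} provides.
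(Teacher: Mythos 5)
Your proof is correct and follows essentially the same route as the paper: establish $h[\F_x]=\F_{h(x)}$ from Lemma~\ref{fix1} and the topological definition of the filters, then translate the hypothesis through Lemma~\ref{equiv} in its contravariant form. The only difference is that you spell out the verification of $h[\F_x]=\F_{h(x)}$, which the paper leaves implicit.
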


\begin{proof} Assume that $\Pi_x\subset\Pi_y$. By Lemma~\ref{equiv},  $\F_y\subset\F_x$. Lemma~\ref{fix1} and the (topological) definition of the filters $\F_x$ and $\F_{h(x)}$ imply that $h[\F_x]=\F_{h(x)}$ where $h[\F_x]=\{h(F):F\in\F_x\}$. By analogy we can show that $h[\F_y]=\F_{h(y)}$. Then $\F_{h(y)}=h[\F_y]\subset h[\F_x]=\F_{h(x)}$ and by Lemma~\ref{equiv}, $\Pi_{h(x)}\subset\Pi_{h(y)}$.
\end{proof}

\begin{lemma}\label{sigma} For every homeomorphism $h:\IN_\tau\to\IN_\tau$ there exists a unique bijective map $\sigma:\Pi\to\Pi$ of the set $\Pi$ of all prime numbers such that $\Pi_{h(p)}=\{\sigma(p)\}$ and $\Pi_{h^{-1}(q)}=\{\sigma^{-1}(q)\}$ for any $p,q\in \Pi$.
\end{lemma}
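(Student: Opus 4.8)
The plan is to recast the statement order-theoretically. The family $\{\Pi_x:x\in\IN\setminus\{1\}\}$ is exactly the collection of all non-empty finite subsets of $\Pi$ (ordered by inclusion), and the primes are characterized among non-units as those $p$ with $\Pi_p$ a singleton, i.e.\ an \emph{atom} of this poset. The idea is to show that $h$ induces an order-automorphism of this poset via $\Pi_x\mapsto\Pi_{h(x)}$, and then to observe that any order-automorphism must permute the atoms; the permutation of atoms will be the desired $\sigma$.

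First I would upgrade Lemma~\ref{order} to a genuine equivalence. Since $h(1)=1$ by Lemma~\ref{fix1}, the homeomorphism $h$ restricts to a bijection of $\IN\setminus\{1\}$. Applying Lemma~\ref{order} first to $h$ and then to $h^{-1}$ yields, for all $x,y\in\IN\setminus\{1\}$,
$$\Pi_x\subseteq\Pi_y\iff\Pi_{h(x)}\subseteq\Pi_{h(y)}.$$
In particular $\Pi_x=\Pi_y\iff\Pi_{h(x)}=\Pi_{h(y)}$, so the assignment $\Pi_x\mapsto\Pi_{h(x)}$ is a well-defined, inclusion-preserving bijection of the family of non-empty finite subsets of $\Pi$, with inverse induced by $h^{-1}$.

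Next I would prove that for every prime $p$ the set $\Pi_{h(p)}$ is a singleton. Since $\Pi_p=\{p\}$, this is precisely the assertion that the induced automorphism sends an atom to an atom. Concretely, suppose toward a contradiction that $\Pi_{h(p)}$ contains two distinct primes, and pick any prime $r\in\Pi_{h(p)}$, so that $\Pi_r=\{r\}\subsetneq\Pi_{h(p)}$. Setting $x=h^{-1}(r)$ we have $x\neq1$ and $\Pi_{h(x)}=\{r\}\subsetneq\Pi_{h(p)}$. The displayed equivalence forces $\Pi_x\subseteq\Pi_p=\{p\}$; moreover $\Pi_x=\Pi_p$ would give $\Pi_{h(x)}=\Pi_{h(p)}$, contradicting the proper inclusion. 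Hence $\Pi_x\subsetneq\{p\}$, i.e.\ $\Pi_x=\emptyset$, which is impossible for $x\neq1$. Thus $\Pi_{h(p)}=\{\sigma(p)\}$ for a unique prime $\sigma(p)$, and this defines a map $\sigma:\Pi\to\Pi$; its uniqueness is immediate from the defining relation $\{\sigma(p)\}=\Pi_{h(p)}$.

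Finally, running the identical argument with $h^{-1}$ in place of $h$ produces a map $\sigma':\Pi\to\Pi$ with $\Pi_{h^{-1}(q)}=\{\sigma'(q)\}$ for every prime $q$, and I would verify that $\sigma'=\sigma^{-1}$. For a prime $p$ with $q=\sigma(p)$ we have $\Pi_{h(h^{-1}(q))}=\Pi_q=\{q\}=\Pi_{h(p)}$, so the equivalence applied to the pair $(h^{-1}(q),p)$ gives $\{\sigma'(q)\}=\Pi_{h^{-1}(q)}\subseteq\Pi_p=\{p\}$, whence $\sigma'(q)=p$; the symmetric computation gives $\sigma(\sigma'(q))=q$. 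Therefore $\sigma$ is a bijection with inverse $\sigma'$, and $\Pi_{h^{-1}(q)}=\{\sigma^{-1}(q)\}$ holds as required. The only delicate point is the atom-preservation step, where a \emph{proper} inclusion has to be transported through $h$; this is exactly what the two-sided strengthening of Lemma~\ref{order} secures, so I expect no further obstacle.
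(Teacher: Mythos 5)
Your proof is correct and rests on exactly the same ingredients as the paper's: Lemma~\ref{fix1} and Lemma~\ref{order} applied to both $h$ and $h^{-1}$, which yield the two-sided equivalence $\Pi_x\subseteq\Pi_y\iff\Pi_{h(x)}\subseteq\Pi_{h(y)}$ and from it the singleton property, injectivity and surjectivity of $\sigma$. The order-automorphism/atom-preservation framing is just a repackaging of the paper's direct two-step application of Lemma~\ref{order}, so this is essentially the same argument.
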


\begin{proof} By Lemma~\ref{fix1}, for every prime number $p$  the image $h(p)$ is not equal to $1$, which implies that the set $\Pi_{h(p)}$ is not empty and hence contains some prime number $\sigma(p)$.
We claim that $\Pi_{h(p)}=\{\sigma(p)\}$. Since $\Pi_{\sigma(p)}=\{\sigma(p)\}\subset \Pi_{h(p)}$, we can apply Lemma~\ref{order} and conclude that $\Pi_{h^{-1}(\sigma(p))}\subset\Pi_p=\{p\}$ and hence $\Pi_{h^{-1}(\sigma(p))}=\{p\}$.
Since $\Pi_{p}=\{p\}\subset\Pi_{h^{-1}(\sigma(p))}$, we can apply Lemma~\ref{order} once more and conclude that $\Pi_{h(p)}\subset\Pi_{\sigma(p)}=\{\sigma(p)\}$ and hence $\Pi_{h(p)}=\{\sigma(p)\}$.

Next, we show that the map $\sigma$ is bijective. The injectivity of $\sigma$ follows from the equality $\Pi_{h^{-1}(\sigma(p))}=\{p\}$ holding for every $p\in\Pi$. To see that $\sigma$ is surjective, take any prime number $q$ and choose any prime number $p\in\Pi_{h^{-1}(q)}$.
Since $\Pi_p\subset\Pi_{h^{-1}(q)}$, we can apply Lemma~\ref{order} and conclude that $\{\sigma(p)\}=\Pi_{h(p)}\subset \Pi_q=\{q\}$ and hence $q=\sigma(p)$. Then $$\Pi_{h^{-1}(q)}=\Pi_{h^{-1}(\sigma(p))}=\{p\}=\{\sigma^{-1}(q)\}.$$

The equality $\Pi_{h(p)}=\{\sigma(p)\}$ holding for every $p\in\Pi$ witnesses that the map $\sigma:\Pi\to\Pi$ is uniquely determined by the homeomorphism $h$.
\end{proof}

Lemma~\ref{sigma} admits a self-improvement:

\begin{lemma}\label{sigma2} For every homeomorphism $h:\IN_\tau\to\IN_\tau$ there exists a unique bijective map $\sigma:\Pi\to\Pi$ such that $\Pi_{h(x)}=\sigma(\Pi_x)$ and $\Pi_{h^{-1}(x)}=\sigma^{-1}(\Pi_x)$ for any $x\in \IN$.
\end{lemma}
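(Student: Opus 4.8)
The plan is to take the bijection $\sigma:\Pi\to\Pi$ furnished by Lemma~\ref{sigma} and to show that the single formula $\Pi_{h(x)}=\sigma(\Pi_x)$ holds for every $x\in\IN$, proving the two inclusions separately. Lemma~\ref{sigma} already records both $\Pi_{h(p)}=\{\sigma(p)\}$ and $\Pi_{h^{-1}(q)}=\{\sigma^{-1}(q)\}$ for primes $p,q$, and these two facts are exactly what will drive the two inclusions.

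For the inclusion $\sigma(\Pi_x)\subset\Pi_{h(x)}$, I would fix a prime $p\in\Pi_x$. Then $\Pi_p=\{p\}\subset\Pi_x$, so Lemma~\ref{order} gives $\Pi_{h(p)}\subset\Pi_{h(x)}$; since $\Pi_{h(p)}=\{\sigma(p)\}$ this means $\sigma(p)\in\Pi_{h(x)}$. As $p$ ranges over $\Pi_x$ this yields $\sigma(\Pi_x)\subset\Pi_{h(x)}$.

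For the reverse inclusion $\Pi_{h(x)}\subset\sigma(\Pi_x)$, I would exploit that $h^{-1}$ is again a homeomorphism, so Lemma~\ref{order} applies to it as well. Fix a prime $q\in\Pi_{h(x)}$. Then $\Pi_q=\{q\}\subset\Pi_{h(x)}$, and applying Lemma~\ref{order} to the homeomorphism $h^{-1}$ gives $\Pi_{h^{-1}(q)}\subset\Pi_{h^{-1}(h(x))}=\Pi_x$. Since $\Pi_{h^{-1}(q)}=\{\sigma^{-1}(q)\}$ by Lemma~\ref{sigma}, this says $\sigma^{-1}(q)\in\Pi_x$, i.e. $q\in\sigma(\Pi_x)$. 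Letting $q$ range over $\Pi_{h(x)}$ completes the proof of $\Pi_{h(x)}=\sigma(\Pi_x)$. The companion formula $\Pi_{h^{-1}(x)}=\sigma^{-1}(\Pi_x)$ follows by the identical argument with the roles of $h$ and $h^{-1}$ (and of $\sigma$ and $\sigma^{-1}$) interchanged. Uniqueness of $\sigma$ is then immediate: specializing $\Pi_{h(x)}=\sigma(\Pi_x)$ to a prime $x=p$ recovers $\Pi_{h(p)}=\{\sigma(p)\}$, so $\sigma$ is forced to coincide with the unique map produced by Lemma~\ref{sigma}.

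The argument is short because the genuine content---the interaction between the divisibility preorder on $\IN$ and the filters $\F_x$---has already been packaged into Lemmas~\ref{order} and~\ref{sigma}. The only point that requires a little care is the reverse inclusion, where one must remember to apply the order-preservation Lemma~\ref{order} to the homeomorphism $h^{-1}$ rather than to $h$, and to invoke the second half of Lemma~\ref{sigma} describing $\Pi_{h^{-1}(q)}$; this is precisely where the symmetry between $h$ and $h^{-1}$ is essential.
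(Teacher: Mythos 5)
Your proposal is correct and follows essentially the same route as the paper: both inclusions are obtained by combining Lemma~\ref{order} (applied to $h$ for one direction and to $h^{-1}$ for the other) with the prime-by-prime description of $\sigma$ from Lemma~\ref{sigma}, and uniqueness is read off by specializing to primes. The only cosmetic difference is that the paper treats $x=1$ separately via Lemma~\ref{fix1}, whereas in your version that case is vacuous, and your reverse inclusion quotes $\Pi_{h^{-1}(q)}=\{\sigma^{-1}(q)\}$ directly instead of re-deriving it as the paper does.
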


\begin{proof} By Lemma~\ref{sigma}, for every homeomorphism $h:\IN_\tau\to\IN_\tau$ there exists a unique bijective map $\sigma:\Pi\to\Pi$ of such that $\Pi_{h(p)}=\sigma(p)$  and $\Pi_{h^{-1}(p)}=\sigma^{-1}(p)$ for any $p\in \Pi$.

We claim that $\Pi_{h(x)}=\sigma(\Pi_x)$ for any number $x\in\IN$. If $x=1$, then this follows from Lemma~\ref{fix1}. So, we assume that $x\ne1$. For every prime number $p\in\Pi_x$ the inclusion $\Pi_p=\{p\}\subset \Pi_x$ and Lemma~\ref{order} imply
$\{\sigma(p)\}=\Pi_{h(p)}\subset\Pi_{h(x)}$. So, $\sigma(\Pi_x)\subset\Pi_{h(x)}$.

On the other hand, for any prime number $q\in\Pi_{h(x)}$, we can apply Lemma~\ref{sigma} to the homeomorphism $h^{-1}$ and conclude that the set $\Pi_{h^{-1}(q)}$ coincides with the singleton $\{p\}$ of some prime number $p$. Taking into account that $\Pi_p=\{p\}=\Pi_{h^{-1}(q)}$ and applying Lemma~\ref{order}, we conclude that $\{\sigma(p)\}=\Pi_{h(p)}=\Pi_{q}=\{q\}\subset \Pi_{h(x)}$. Applying Lemma~\ref{order} to the inclusion $\Pi_{q}\subset\Pi_{h(x)}$, we get the inclusion $\{p\}=\Pi_{h^{-1}(q)}\subset \Pi_{h^{-1}(h(x))}=\Pi_x$ and finally $q=\sigma(p)\in\sigma(\Pi_x)$. So, $\Pi_{h(x)}\subset \sigma(\Pi_x)$ and hence $\Pi_{h(x)}=\sigma(\Pi_x)$. Also
$\Pi_{h^{-1}(q)}=\{p\}=\{\sigma^{-1}(q)\}$.
\end{proof}

For a number $x\in\IN$ we shall denote by $x^{\IN}:=\{x^n:n\in\w\}$ the multiplicative semigroup in $\IN$ generated by $x$. In this case we say that $x^{\IN}$ is the {\em monogenic semigroup generated by} $x$. For $x,k\in\IN$ the monogenic semigroup $(x^k)^\IN$ generated by $x^k$ will be denoted by $x^{k\IN}$. 

\begin{lemma}\label{l:monogen} Each homeomorphism $h$ of the Golomb space $\IN_\tau$ preserves monogenic semigroups in the sense that $$h(a^{\IN})=h(a)^{\IN}$$ holds for all $a\in\IN$.
\end{lemma}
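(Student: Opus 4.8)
The plan is to reduce the set equality to a single inclusion and to the prime‑support information already available. The case $a=1$ is trivial, since $1^\IN=\{1\}$ and $h(1)=1$ by Lemma~\ref{fix1}. For $a>1$ it suffices to prove the inclusion $h(a^\IN)\subseteq h(a)^\IN$ for \emph{every} homeomorphism $h$: applying this inclusion to the homeomorphism $h^{-1}$ and the number $h(a)$ yields $h^{-1}\!\big(h(a)^\IN\big)\subseteq h^{-1}(h(a))^\IN=a^\IN$, and hence $h(a)^\IN\subseteq h(a^\IN)$, giving equality. Thus I only need to show that each power $h(a^n)$ is a power of $h(a)$. The prime‑support part of this is immediate from Lemma~\ref{sigma2}: as $\Pi_{a^n}=\Pi_a$ for all $n\ge1$, we get $\Pi_{h(a^n)}=\sigma(\Pi_a)=\Pi_{h(a)}$, so all the points $h(a^n)$ lie, together with $h(a)$, in the single support class $\{x\in\IN:\Pi_x=\sigma(\Pi_a)\}$, onto which $h$ maps $\{x:\Pi_x=\Pi_a\}$ bijectively.

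What remains — and what the whole lemma really asserts — is that $h$ respects not just supports but the multiplicative ray structure, carrying the generator to the generator. Already the case $a=p\in\Pi$ exhibits this difficulty: here $p^\IN=\{1\}\cup\{x:\Pi_x=\{p\}\}$, so the previous paragraph only gives the \emph{weak} equality $h(p^\IN)=\{1\}\cup\{y:\Pi_y=\{\sigma(p)\}\}=\sigma(p)^\IN$, whereas the lemma demands $h(p^\IN)=h(p)^\IN$, which is equivalent to $h(p)=\sigma(p)$ rather than some proper power $\sigma(p)^k$. In other words, the lemma secretly contains the statement that images of primes are prime. None of the invariants constructed so far can detect this, because the filters $\F_x$, $\F_0$ and the bijection $\sigma$ of Lemmas~\ref{sigma}--\ref{sigma2} depend only on the support $\Pi_x$ and so cannot tell $p$ from $p^2$, nor $a$ from any of its proper powers.

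Consequently the core of the proof must be a new, strictly finer, homeomorphism‑invariant relation that recognises the assertion ``$y$ is a power of $x$'' for $x,y$ of equal support. The plan is to read this relation off the closure formula of Lemma~\ref{basic}. Membership of a point $z$ in the closure of a basic set $c+d\IN_0$ is controlled, for each prime $r\mid d$, by the congruence $z\equiv c\pmod{r^{l_r(d)}}$; letting $d$ range over numbers coprime to $x$, these closures detect the residue of a point modulo an arbitrary $q$ coprime to $x$, hence whether that residue lies in the cyclic subgroup generated by $x$ in $(\IZ/q\IZ)^\times$. Since $y=x^n$ forces $y\equiv x^n\pmod q$ for every such $q$, one can hope to characterise $y\in x^\IN$ purely by the position of $y$ in these closures together with $\Pi_y=\Pi_x$; and because homeomorphisms preserve closures, such a characterisation is automatically transported by $h$. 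Feeding $\sigma$ into the resulting rigidity then forces $h$ to send the ray $\{a^n\}$ onto $\{h(a)^n\}$ and the generator $a$ to $h(a)$, i.e.\ $h(a^\IN)=h(a)^\IN$.

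The main obstacle is exactly the construction demanded in the previous paragraph: manufacturing, out of Lemma~\ref{basic}, a closure or separation property sensitive to the exponents $l_p(x)$ and not merely to the support $\Pi_x$. The difficulty is structural: a basic neighbourhood $x+d\IN_0$ requires $\gcd(x,d)=1$, so no prime dividing $x$ ever appears as a modulus in a neighbourhood of $x$, and therefore the exponents of the primes dividing $x$ are invisible to the neighbourhoods of $x$ itself. They can be probed only indirectly, through the closures of neighbourhoods of auxiliary points whose residues interact with $x$, and assembling this indirect information into a single homeomorphism‑invariant statement that pins down the generator is the crux of the argument.
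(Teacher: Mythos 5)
Your reductions are fine: the equality does follow from the single inclusion $h(a^{\IN})\subseteq h(a)^{\IN}$ applied to both $h$ and $h^{-1}$ (the paper does exactly this), and Lemma~\ref{sigma2} does give $\Pi_{h(a^n)}=\Pi_{h(a)}$ for free. You have also correctly located the real content of the lemma: support data alone cannot distinguish $h(a)^n$ from any other number with the same prime support, so something strictly finer is needed. But at precisely that point the proposal stops being a proof. Your third and fourth paragraphs only describe a hope (``one can hope to characterise $y\in x^{\IN}$\dots''), and you yourself concede that assembling the residue information ``into a single homeomorphism-invariant statement that pins down the generator is the crux of the argument.'' That crux is exactly what is missing. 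Moreover, the route you sketch faces two concrete obstructions you do not address: (i) a homeomorphism preserves closures but not the basic-progression structure, so it is unclear how the condition ``$y\bmod q$ lies in the cyclic subgroup generated by $x$ in $(\IZ/q\IZ)^\times$'' could be expressed in terms that $h$ transports --- the only closure-derived invariants available (the filters $\F_0$ and $\F_x$) see nothing beyond $\Pi_x$; and (ii) even granting such a detection for every modulus $q$ coprime to $x$, passing from ``$y$ is a power of $x$ modulo every such $q$'' to ``$y$ is a power of $x$ in $\IN$'' is itself a nontrivial number-theoretic step (a form of the multiplicative support problem), not a formality.

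The paper avoids constructing a static invariant altogether and argues dynamically. Assuming $h(a^n)\notin b^{\IN}$ for $b=h(a)$, it conjugates the continuous power map $f:x\mapsto x^n$ (Theorem~\ref{poly}) to obtain a continuous map $\varphi=h\circ f\circ h^{-1}$ with $\varphi(b)=h(a^n)$ and $\Pi_{\varphi(b)}=\Pi_b$. Continuity of $\varphi$ at $b$ for the neighbourhood $h(a^n)+(b^k-1)\IN_0$, Dirichlet's theorem producing a prime $p$ in the progression $b+(b^k-1)d\IN_0$, and the fact that $\Pi_{\varphi(p)}=\{p\}$ (so $\varphi(p)=p^l$) together force $h(a^n)\equiv p^l\equiv b^l\equiv b^j\pmod{b^k-1}$ with $0\le j<k$; choosing $k$ with $b^k-1>h(a^n)$ makes this impossible unless $h(a^n)=b^j$. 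That combination --- continuity of the conjugated power map evaluated at well-chosen primes --- is the idea your proposal lacks, and without it (or a completed version of your own plan) the lemma is not proved.
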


\begin{proof} Fix any point $a\in\IN$ and put $b:=h(a)$. First we show that $h(a^{\IN})\subset b^{\IN}$. To derive a contradiction, assume that $h(a^n)\notin b^{\IN}$ for some $n\ge 2$. 

By Lemma~\ref{sigma2}, there exists a bijective function $\sigma:\Pi\to\Pi$ such that $\Pi_{h(x)}=\sigma(\Pi_x)$ for all $x\in \IN$. 
By Theorem~\ref{poly}, the polynomial $f:\mathbb {N}_\tau\to\mathbb{N}_\tau$, $f:x\mapsto x^n$, is continuous. Then the map $\varphi=h\circ f\circ h^{-1}:\mathbb{N}_\tau \to\mathbb {N}_\tau$ is continuous, too. Observe that $\varphi(b)=h\circ f(a)=h(a^n)$ and $$\Pi_{\varphi(b)}=\Pi_{h(a^n)}=\sigma(\Pi_{a^n})=\sigma(\Pi_a)=\Pi_{h(a)}=\Pi_b.$$
Then for every $k\in\IN$ the numbers $h(a^n)$ and $b^k-1$ are coprime. 
Choose a number $k\in\IN$ such that $b^k-1>h(a^n)$ and consider the neighborhood $h(a^n)+(b-1)\IN_0$ of $h(a^n)=\varphi(b)$. By the continuity of the map $\varphi$, the point $b$ has a neighborhood $b+d\IN_0\in\tau_b$ such that $\varphi(b+d\IN_0)\subset h(a^n)+(b^k-1)\IN_0$.

By the Dirichlet Theorem~\ref{Dirichlet}, the arithmetic progression $b+(b^k-1)d\IN_0$ contains a prime number $p$. Lemma~\ref{sigma} implies that $\varphi(p)=p^l$ for some $l\ge 1$.  Then  $$h(a^n)=\varphi(p)=p^l\in(b+(b^k-1)\IN_0)^l\subset b^l+(b^k-1)\IZ.$$ Write $l$ as $l=ki+j$ where $j\in[0,k)$. If $i=0$, then $l=j$ and hence $h(a^n)\in b^j+(b^k-1)\IZ$. If $i>0$, then $b^l-b^j=b^j((b^k)^i-1)\in(b^k-1)\IZ$ and again $h(a^n)\in b^l+(b^k-1)\IZ=b^j+(b^k-1)\IZ$. In both cases we obtain the inclusion $h(a^n)\in b^j+(b^k-1)\IZ$, which is not possible as $0<|b^j-h(a^n)|\le \max\{h(a^n),b^{k-1}\}<b^k-1$. This contradiction completes the proof of the inclusion $h(a^n)\subset h(a)^{\IN}$.

By analogy we can prove that $h^{-1}(x^{\IN})\subset (h^{-1}(x))^{\IN}$ and hence $x^{\IN}\subset h(h^{-1}(x)^{\IN})$ for all $x\in\IN$. Then for any $a,n\in\IN$ we get $h(a)^n\in h(h^{-1}(h(a))^{\IN}=h(a^{\IN})$.
So, $h(a)^{\IN}\subset h(a^{\IN})$. Combining this inclusion with $h(a^{\IN})\subset h(a)^{\IN}$, we obtain the required equality $h(a^{\IN})=h(a)^{\IN}$.
\end{proof}

\begin{lemma}\label{Pi} $h(\Pi)=\Pi$ for any homeomorphism $h$ of the Golomb space $\IN_\tau$.
\end{lemma}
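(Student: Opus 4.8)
The plan is to combine Lemma~\ref{sigma}, which already tells us that $h$ sends each prime to a \emph{prime power}, with the preservation of monogenic semigroups from Lemma~\ref{l:monogen}, in order to pin the exponent of that prime power down to $1$.

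First I would fix a prime $p$ and set $q:=\sigma(p)$. By Lemma~\ref{sigma}, $\Pi_{h(p)}=\{q\}$, so $h(p)$ is a power of $q$, say $h(p)=q^k$ with $k\ge 1$; the whole task is to show $k=1$. The key auxiliary remark is that $h^{-1}(q)$ is itself a (nontrivial) power of $p$: applying Lemma~\ref{sigma} to the prime $q$ gives $\Pi_{h^{-1}(q)}=\{\sigma^{-1}(q)\}=\{p\}$, whence $h^{-1}(q)=p^m$ for some $m\ge 1$. In particular $p^m\in p^{\IN}$.

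Now I would invoke Lemma~\ref{l:monogen}, which yields $h(p^{\IN})=h(p)^{\IN}=(q^k)^{\IN}=\{q^{kn}:n\in\w\}$. Feeding $p^m\in p^{\IN}$ through $h$ gives $q=h(p^m)\in(q^k)^{\IN}$, so $q^1=q^{kn}$ for some $n\in\w$; since $q>1$ the exponents must agree, forcing $kn=1$ and hence $k=1$. Thus $h(p)=q\in\Pi$, and as $p$ was an arbitrary prime we obtain $h(\Pi)\subset\Pi$. Running the identical argument for the homeomorphism $h^{-1}$ (its associated bijection being $\sigma^{-1}$) gives $h^{-1}(\Pi)\subset\Pi$, and combining the two inclusions produces the desired equality $h(\Pi)=\Pi$.

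I do not expect a genuine obstacle: the substantive work is already carried out in Lemmas~\ref{sigma} and~\ref{l:monogen}, and what remains is purely the exponent bookkeeping. The only place deserving a moment of care is verifying that $m\ge 1$, so that $q=h(p^m)$ is truly a nonidentity element of the monogenic semigroup and the comparison $q^1=q^{kn}$ is meaningful; this is exactly where $q>1$ guarantees that the exponents in $(q^k)^{\IN}$ are unambiguously read off, legitimately forcing $k=1$.
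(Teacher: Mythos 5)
Your proposal is correct and follows essentially the same route as the paper: both use Lemma~\ref{sigma} (the paper cites Lemma~\ref{sigma2}) to write $h(p)=q^k$ and $h^{-1}(q)=p^m$, and then apply Lemma~\ref{l:monogen} to $p^{\IN}$ to force $k=1$ by comparing exponents of $q$. The only cosmetic difference is that the paper phrases the final step as a semigroup inclusion $q^{\IN}\subset q^{n\IN}$ while you trace the single element $q=h(p^m)$ through $h(p^{\IN})$; the content is identical.
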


\begin{proof} It suffices to show that $h(p)\in\Pi$ for every $p\in\Pi$. By Lemma~\ref{sigma2}, there exists a bijective map $\sigma:\Pi\to\Pi$ such that $\sigma(\Pi_{h(x)})=\sigma(\Pi_x)$ and $\sigma^{-1}(\Pi_{h^{-1}(x)})=h^{-1}(\sigma^{-1}(\Pi_x))$ for all $x\in \IN$. In particular, $\Pi_{h(p)}=\{q\}$ for $q=\sigma(p)$ and $\Pi_{h^{-1}(q)}=\{p\}$. This implies that
$h(p)=q^n$ and $h^{-1}(q)=p^m$ for some $n,,m\in\IN$.
Applying Lemma~\ref{l:monogen}, we obtain 
$$q^{\IN}=h(p^m)^\IN=h(p^{m\IN})\subset h(p^\IN)=h(p)^\IN=q^{n\IN}$$and hence $n=1$.
\end{proof}

\begin{lemma}\label{final3} For any homeomorphism $h$ of the Golomb space $\IN_\tau$ we have $\Pi_{h(x)}=h(\Pi_x)$ for every $x\in\IN$.
\end{lemma}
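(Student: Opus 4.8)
The plan is to reduce the claimed identity to the single observation that the homeomorphism $h$ and the prime permutation $\sigma$ produced earlier agree on the set $\Pi$ of all prime numbers. Indeed, Lemma~\ref{sigma2} already supplies a bijection $\sigma\colon\Pi\to\Pi$ with $\Pi_{h(x)}=\sigma(\Pi_x)$ for every $x\in\IN$, so it suffices to prove that $h(\Pi_x)=\sigma(\Pi_x)$; and since $\Pi_x$ is a set of primes, this will follow at once once I know that $h(p)=\sigma(p)$ for each $p\in\Pi$.

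First I would fix a prime $p$ and invoke Lemma~\ref{sigma} to obtain $\Pi_{h(p)}=\{\sigma(p)\}$. This says that the only prime dividing $h(p)$ is $\sigma(p)$, i.e. $h(p)=\sigma(p)^n$ for some $n\ge 1$. Then I would appeal to Lemma~\ref{Pi}, which guarantees $h(p)\in\Pi$, so that $h(p)$ is itself prime. A prime that equals a power $\sigma(p)^n$ of another prime forces $n=1$, whence $h(p)=\sigma(p)$. Thus $h$ restricts on $\Pi$ to the permutation $\sigma$.

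Finally, for an arbitrary $x\in\IN$ the set $\Pi_x\subset\Pi$ consists of primes, so applying the previous step elementwise gives $h(\Pi_x)=\{h(p):p\in\Pi_x\}=\{\sigma(p):p\in\Pi_x\}=\sigma(\Pi_x)$. Combining this with the identity $\Pi_{h(x)}=\sigma(\Pi_x)$ from Lemma~\ref{sigma2} yields the desired equality $\Pi_{h(x)}=h(\Pi_x)$.

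I do not expect a genuine obstacle here: all the substantive work has already been carried out in Lemmas~\ref{sigma}, \ref{Pi}, and \ref{sigma2}, and the present statement is essentially the remark that those lemmas together pin down the abstract permutation $\sigma$ as the concrete restriction $h|\Pi$. The only point requiring any care is the elementary number-theoretic step that a prime number cannot be a proper power of another prime, which is precisely what collapses $h(p)=\sigma(p)^n$ to $h(p)=\sigma(p)$.
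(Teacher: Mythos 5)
Your proposal is correct and follows essentially the same route as the paper: both invoke Lemma~\ref{sigma2} for the identity $\Pi_{h(x)}=\sigma(\Pi_x)$ and then combine $\Pi_{h(p)}=\{\sigma(p)\}$ with Lemma~\ref{Pi} (that $h(p)$ is prime) to conclude $h(p)=\sigma(p)$, i.e.\ $\sigma=h|\Pi$. The only cosmetic difference is that you pass through the intermediate form $h(p)=\sigma(p)^n$ before collapsing $n=1$, whereas the paper reads off $\{h(p)\}=\Pi_{h(p)}=\{\sigma(p)\}$ directly.
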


\begin{proof} By Lemma~\ref{sigma2}, there exists a bijective function $\sigma:\Pi\to\Pi$ such that $\Pi_{h(x)}=\sigma(\Pi_x)$ for all $x\in \IN$. By Lemma~\ref{Pi}, for every prime number $p$ we get $\{h(p)\}=\Pi_{h(p)}=\{\sigma(p)\}$. Therefore, $\sigma=h|\Pi$ and
$\Pi_{h(x)}=\sigma(\Pi_x)=h(\Pi_x)$ for every $x\in \IN$.
\end{proof}

Next, we investigate the restrictions of homeomorphisms of $\IN_\tau$ to monogenic subsemigroups of $\IN$.

A function $\mu:\IN\to\IN$ is called {\em multiplicative} if $\mu(xy)=\mu(x)\cdot\mu(y)$ for all $x,y\in\IN$. It is easy to see that a function is multiplicative if and only if 
\begin{itemize}
%\item $\mu(\Pi)=\Pi$;
\item $\mu(p^n)=\mu(p)^n$ for every prime number $p\in\Pi$ and every $n\in\IN$;
\item $\mu(x{\cdot}y)=\mu(x)\cdot\mu(y)$ for any coprime numbers $x,y\in \IN$. 
\end{itemize}
This implies that each multiplicative function $\mu$ is uniquely determined by its restriction $\mu{\restriction}\Pi$ to the set $\Pi$ of prime numbers. If a multiplicative function $\mu:\IN\to\IN$ is bijective, then $\mu(\Pi)=\Pi$ and the inverse function $\mu^{-1}$ is multiplicative, too.

\begin{lemma}\label{l:multi} Let $h$ be a homeomorphism of the Golomb space $\IN_\tau$. For every $a\in\IN\setminus\{1\}$ there exists a multiplicative bijection $\mu_a$ of $\IN$ such that $h(a^n)=h(a)^{\mu_a(n)}$ for any $n\in\w$.
\end{lemma}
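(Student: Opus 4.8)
The plan is to transfer the algebraic equality $h(a^{\IN})=h(a)^{\IN}$ supplied by Lemma~\ref{l:monogen} into a statement purely about the exponents, and then to recognize the resulting exponent map as an automorphism of the divisibility poset $(\IN,\mid)$, which will force it to be multiplicative. So first I would set $b:=h(a)$ and note that $b\neq 1$: indeed $h(1)=1$ by Lemma~\ref{fix1} and $h$ is injective, so $h(a)\neq 1$ whenever $a\neq 1$. By Lemma~\ref{l:monogen} the homeomorphism $h$ restricts to a bijection of $a^{\IN}=\{a^n:n\in\w\}$ onto $b^{\IN}=\{b^m:m\in\w\}$. Since $a,b\neq 1$, the maps $n\mapsto a^n$ and $m\mapsto b^m$ are bijections of $\w$ onto these semigroups, so there is a unique bijection $\mu_a:\w\to\w$ with $h(a^n)=b^{\mu_a(n)}$ for all $n\in\w$. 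As $h(a^0)=h(1)=1=b^0$, we get $\mu_a(0)=0$, and hence $\mu_a$ restricts to a bijection of $\IN$.

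The key step, which I expect to be the main obstacle, is to show that $\mu_a$ both preserves and reflects divisibility, i.e.\ $l\mid k$ if and only if $\mu_a(l)\mid\mu_a(k)$ for all $k,l\in\IN$. For this I would apply Lemma~\ref{l:monogen} to the number $a^k$, obtaining $h(a^{k\IN})=h(a^k)^{\IN}=(b^{\mu_a(k)})^{\IN}=b^{\mu_a(k)\IN}$. Since $h$ is a bijection, the set-inclusion $a^{k\IN}\subseteq a^{l\IN}$ is equivalent to $b^{\mu_a(k)\IN}\subseteq b^{\mu_a(l)\IN}$; and for any number $c\neq 1$ one checks directly, using that distinct powers of $c$ are distinct, that $c^{s\IN}\subseteq c^{t\IN}$ holds exactly when $t\mid s$. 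Applying this to $c=a$ and to $c=b$ yields the claimed equivalence, so that $\mu_a$ is an automorphism of the poset $(\IN,\mid)$.

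It remains to prove that every automorphism $\mu$ of $(\IN,\mid)$ is multiplicative, which is now a routine order-theoretic argument. Since $1$ is the least element and the primes are precisely the atoms, $\mu(1)=1$ and $\mu$ permutes $\Pi$. A number $n>1$ is a prime power if and only if its set of divisors is a chain, a property preserved by $\mu$; moreover $\mu$ carries the divisors of $p^k$ bijectively onto the divisors of $\mu(p^k)$, so $\mu(p^k)$ is a prime power with exactly $k+1$ divisors whose unique prime divisor is $\mu(p)$, forcing $\mu(p^k)=\mu(p)^k$. Finally $(\IN,\mid)$ is a lattice with meet $\gcd$ and join $\mathrm{lcm}$, and an order automorphism preserves meets and joins; hence for coprime $x,y$ the images $\mu(x),\mu(y)$ are coprime and $\mu(xy)=\mu(\mathrm{lcm}(x,y))=\mathrm{lcm}(\mu(x),\mu(y))=\mu(x)\mu(y)$. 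By the criterion recorded just before the lemma, these two properties make $\mu$ multiplicative, and applying this to $\mu=\mu_a$ completes the proof. The genuinely delicate point throughout is the second paragraph: everything hinges on reading off divisibility from inclusions of monogenic semigroups, after which the conclusion is mechanical.
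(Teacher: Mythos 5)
Your proof is correct and follows essentially the same route as the paper: both derive the result from Lemma~\ref{l:monogen} by observing that $h$ induces an inclusion-preserving bijection of the monogenic subsemigroups $a^{k\IN}$ onto the $b^{m\IN}$, hence an automorphism of the divisibility order on the exponents, which must be multiplicative. The only difference is that the paper dismisses this last step with ``it can be shown that each isomorphism of the lattices $X,Y$ is induced by a multiplicative bijection,'' whereas you actually supply the (correct) argument, including the verification that an automorphism of $(\IN,\mid)$ satisfies $\mu(p^k)=\mu(p)^k$ and $\mu(xy)=\mu(x)\mu(y)$ for coprime $x,y$.
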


\begin{proof} Given any $a\in\IN$, let $b=h(a)$ and consider the families $X:=\{a^{n\IN}:n\in\IN\}$ and $Y:=\{b^{n\IN}:n\in\IN\}$ of monogenic subsemigroups of $\IN$. The families $X,Y$ are endowed with the partial order of inclusion. This partial order turns $X,Y$ into lattices. 

In the lattices $X,Y$ consider the subsets $X_\Pi:=\{a^{p\IN}:p\in\Pi\}$ and $Y_\Pi:=\{b^{p\IN}:p\in\Pi\}$ and observe that $X_\Pi$ and $Y_\Pi$ are the sets of maximal elements of the sets $X\setminus\{a^\IN\}$ and $Y\setminus\{b^\IN\}$ in the lattices $X,Y$, respectively. It can be shown that each isomorphism $f:X\to Y$ between the lattices $X,Y$ is induced by some multiplicative bijection $\mu:\IN\to\IN$ in the sense that $f(a^{n\IN})=b^{\mu(n)\IN}$ for all $n\in\IN$. 

 By Lemma~\ref{l:monogen}, the homeomorphism $h$ induces a lattice isomorphism $$\tilde h:X\to Y,\;\;\tilde h:a^{n\IN}\mapsto h(a^n)^\IN$$of the lattices $X,Y$. So, there exists a
 multiplicative bijection $\mu_a:\IN\to\IN$ such that $\tilde h(a^{n\IN})=b^{\mu_a(n)\IN}$ for every $n\in\IN$. Then
 $h(a^n)^\IN=\tilde h (a^{n\IN})=b^{\mu_a(n)\IN}$ and hence $h(a^n)=b^{\mu_a(n)}$ for all $n\in\IN$.
 \end{proof}
 
Next, we shall prove that $\mu_a=\mu_b$ for any elements $a,b\in\IN$. For this we use the following lemma, proved by joint efforts of Mathoverflow users {\tt Fran\c cois Brunault} and {\tt so-called friend Don}, see \cite{MO}.

\begin{lemma}\label{l:Brunault} For any numbers $b\in\Pi$ and $a\in\IN\setminus\{x^b:x\in\IN\}$ there exist infinitely many prime numbers $p\in 1+b\IN$ such that $a^{(p-1)/b}\ne 1\mod p$.
\end{lemma}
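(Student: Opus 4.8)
The plan is to convert the condition $a^{(p-1)/b}\not\equiv 1\pmod p$ into the non-solvability of $x^b\equiv a\pmod p$, and then to produce infinitely many such primes from the irreducibility of the polynomial $x^b-a$ via the Frobenius Density Theorem already invoked in Lemma~\ref{l:frob}. Fix the prime $b$ and the non-$b$-th-power $a\in\IN$. The elementary reduction is this: for a prime $p\nmid a$ with $p\equiv 1\pmod b$, the group $(\IZ/p\IZ)^\times$ is cyclic of order divisible by $b$, and the usual power-residue criterion says that $a^{(p-1)/b}\equiv 1\pmod p$ holds if and only if $a$ is a $b$-th power modulo $p$, i.e. $x^b\equiv a\pmod p$ is solvable. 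So it suffices to exhibit infinitely many primes $p$ for which $x^b\equiv a\pmod p$ has no solution, and to verify that each such $p$ (apart from finitely many) satisfies $p\equiv 1\pmod b$.

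First I would record that $x^b-a$ is irreducible over $\IQ$. This is where both hypotheses are used: since $a\in\IN$ is not a $b$-th power of a natural number it is not a $b$-th power of a rational number either, and for a prime exponent $b$ the polynomial $x^b-a$ is irreducible over $\IQ$ exactly under this condition. Its splitting field is Galois over $\IQ$ and the Galois group acts transitively on the $b$ roots; as $b$ is prime this group contains a $b$-cycle, hence a fixed-point-free element. Consequently the Frobenius Density Theorem (see \cite{Sury} or \cite{SL}), in the same form as in Lemma~\ref{l:frob}, yields infinitely many primes $p$ for which $x^b-a$ has no root in $\IZ/p\IZ$. Let $P$ be this infinite set.

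It remains to see that almost every $p\in P$ is congruent to $1$ modulo $b$. Discard from $P$ the finitely many primes dividing $ab$. For the remaining $p\in P$ we have $p\nmid a$, so a nonsolvable $x^b\equiv a\pmod p$ forces $p\equiv 1\pmod b$: indeed, if $p\not\equiv 1\pmod b$ then $\gcd(b,p-1)=1$, the map $x\mapsto x^b$ is an automorphism of the cyclic group $(\IZ/p\IZ)^\times$, and every residue prime to $p$---in particular $a$---is a $b$-th power modulo $p$, contradicting $p\in P$. Thus $p\equiv 1\pmod b$, and the residue criterion of the first paragraph gives $a^{(p-1)/b}\not\equiv 1\pmod p$. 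This produces the required infinite family of primes.

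The only non-elementary ingredient is the Frobenius Density Theorem, so I expect no serious obstacle beyond correctly justifying irreducibility and the existence of a fixed-point-free Frobenius. Conceptually the result is the Kummer-theoretic fact that, because $\gcd(b,b-1)=1$, the number $a$ stays a non-$b$-th power over the cyclotomic field $\IQ(\zeta_b)$, so the extension $\IQ(\zeta_b,\sqrt[b]{a})/\IQ(\zeta_b)$ is nontrivial and a positive proportion of the primes $p\equiv 1\pmod b$ do not split completely in it; the elementary route above is preferable here precisely because it reuses the version of the Frobenius Density Theorem already present in the paper.
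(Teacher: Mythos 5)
Your proof is correct, but it replaces the paper's key non-elementary input with a different one. The paper derives the infinitude of the set $P$ of primes $p$ for which $x^b\equiv a\pmod p$ is unsolvable from the Grunwald--Wang Theorem (an element of $\IQ^\times$ that is a $b$-th power modulo almost every prime is a $b$-th power in $\IQ$, the prime exponent $b$ avoiding the exceptional $8$-th power case), whereas you obtain the same set $P$ from the Frobenius Density Theorem applied to the irreducible polynomial $x^b-a$, after checking that the Galois group of its splitting field, being transitive of prime degree $b$, contains a $b$-cycle and hence a derangement. Both routes are sound: your reduction of $a^{(p-1)/b}\not\equiv 1\pmod p$ to the unsolvability of $x^b\equiv a\pmod p$ for $p\equiv 1\pmod b$, and the observation that every $p\in P$ not dividing $ab$ must satisfy $p\equiv 1\pmod b$ because $x\mapsto x^b$ is a bijection of the cyclic group $(\IZ/p\IZ)^\times$ when $\gcd(b,p-1)=1$, coincide with the paper's second and third paragraphs (the paper phrases the last step via a generator $g$ of $(\IZ/p\IZ)^\times$, which is the same power-residue criterion). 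What your approach buys is economy of tools --- the Frobenius Density Theorem is already invoked in Lemma~\ref{l:frob}, so no appeal to class field theory is needed --- at the modest cost of having to justify the irreducibility of $x^b-a$ (immediate for prime $b$ and $a$ not a $b$-th power) and the existence of a fixed-point-free Galois element, both of which you handle correctly.
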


\begin{proof} The choice of $a$ ensures that the equation $x^b=a$ has no solutions in $\IQ$. Then we can apply the Grunwald-Wang Theorem \cite[Ch.X]{AT} and conclude that the set $P$ of prime numbers $p$ for which the equation $x^b=a \mod p$ has no solutions is infinite. We claim that $P\setminus \Pi_a\subset 1+b\IZ$. In the opposite case, we could find a prime number $p\in P$ with $p\notin \Pi_a\cup(1+b\IZ)$ and conclude that $p-1$ is not divisible by the prime number $b$ and hence $b$ is coprime with $p-1$. It is well-known that the multiplicative group $\IZ_p^*$ of the finite field $\IZ_p:=\IZ/p\IZ$ is cyclic of order $p-1$. Since $b$ is coprime with $p-1$, the map $\IZ_p^*\to\IZ^*_p$, $x\mapsto x^b$, is bijective, which implies that the equation $(x^b=a \mod p)$ has a solution. But this contradicts the choice of $p\in P$. This contradiction completes the proof the inclusion $P\setminus \Pi_a\subset 1+b\IZ$.

It remains to prove that for each $p\in P\cap (1+b\IZ)$ we have $a^{(p-1)/b}\ne 1\mod p$. Since the group $\IZ_p^*$ is cyclic, there exists a positive number $g<p$ such that the coset $g+p\IZ$ is a generator of $\IZ_p^*$. Then $a=g^k\mod p$ for some $k<p$. Assuming that $a^{(p-1)/b}=1\mod p$, we would conclude that $g^{k(p-1)/b}=1 \mod p$ and hence $k(p-1)/b\in(p-1)\IZ$. Then $k/b$ is integer and hence $a=g^{k}=(g^{k/b})^b\mod p$, which contradicts the choice of $p\in P$.
\end{proof}
 
 \begin{lemma}\label{l:finall} For any homeomorphism $h$ of the Golomb space $\IN_\tau$ there exists a multiplicative bijection $\mu$ on $\IN$ such that  $h(a^n)=h(a)^{\mu(n)}$ for any $a,n\in\IN$.
 \end{lemma}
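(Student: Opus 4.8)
The plan is to upgrade the family $\{\mu_a\}_{a\in\IN\setminus\{1\}}$ produced by Lemma~\ref{l:multi} to a single multiplicative bijection by proving that $\mu_a$ does not depend on $a$. Since a multiplicative bijection is determined by its restriction to $\Pi$, it suffices to show that for every prime $b$ the prime $\mu_a(b)$ is the same for all $a\in\IN\setminus\{1\}$; the common values then define $\mu$ on $\Pi$, and extending multiplicatively produces a bijection $\mu$ with $\mu=\mu_a$ for every $a$, whence $h(a^n)=h(a)^{\mu_a(n)}=h(a)^{\mu(n)}$. The case $a=1$ is trivial, since $h(1)=1$ by Lemma~\ref{fix1} and $1^{\mu(n)}=1=h(1^n)$.

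A first reduction I would record is that $\mu_{a^k}=\mu_a$ for all $k$. Indeed, comparing $h(a^{kn})=h(a)^{\mu_a(k)\mu_a(n)}$ (using the multiplicativity of $\mu_a$) with $h((a^k)^n)=h(a^k)^{\mu_{a^k}(n)}=h(a)^{\mu_a(k)\mu_{a^k}(n)}$ and cancelling the exponent $\mu_a(k)$ (legitimate since $h(a)\ge 2$, as $a\ne 1$ forces $h(a)\ne 1$) yields $\mu_{a^k}=\mu_a$. Thus $\mu_a$ depends only on the monogenic semigroup $a^\IN$, and in the argument below I may always assume that $a$ is not a perfect $b$-th power, which is exactly the hypothesis needed to apply Lemma~\ref{l:Brunault}.

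The number-theoretic heart will be the observation that for a prime $b$ the set $g(\IN)=\{x^b:x\in\IN\}$ of $b$-th powers is closed in $\IN_\tau$. Given any $a\notin g(\IN)$, Lemma~\ref{l:Brunault} supplies a prime $p\equiv 1\pmod b$ with $p>a$ and $a^{(p-1)/b}\not\equiv 1\pmod p$; since $b\mid p-1$ this means that $a$ is not a $b$-th power residue modulo $p$, so no $x$ satisfies $x^b\equiv a\pmod p$ and the basic neighborhood $a+p\IN_0$ of $a$ is disjoint from $g(\IN)$. Hence $a\notin\overline{g(\IN)}$, and $g(\IN)$ is closed. Consequently, for the continuous self-map $\Psi:=h\circ g\circ h^{-1}$ (continuous by Theorem~\ref{poly} together with the fact that $h$ is a homeomorphism) the image $\Psi(\IN)=h(g(\IN))$ is closed; and using Lemmas~\ref{l:monogen} and~\ref{l:multi} one computes directly that $\Psi(y)=y^{E(y)}$ for every $y\in\IN\setminus\{1\}$, where $E(y):=\mu_{h^{-1}(y)}(b)$ is a prime.

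It then remains to prove the rigidity statement that the prime exponent $E(y)$ is independent of $y$, and this is the step I expect to be the main obstacle. The idea is to exploit continuity together with Dirichlet's Theorem~\ref{Dirichlet}: for a prime $P$ lying in a suitable basic neighborhood of $y$ one has $\Psi(P)=P^{E(P)}$ and $\Psi(P)\equiv\Psi(y)=y^{E(y)}\pmod r$ for a large prime modulus $r$ chosen so that also $P\equiv y\pmod r$, whence $y^{E(P)}\equiv y^{E(y)}\pmod r$ and the multiplicative order of $y$ modulo $r$ divides $|E(P)-E(y)|$. Choosing $r$ so that this order exceeds the relevant difference forces $E(P)=E(y)$, with the closedness of $\Psi(\IN)$ from the previous paragraph serving to keep the exponents $E(P)$ under control as $r$ varies. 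Carrying this comparison first across primes $P$ (where $E(P)=\mu_{\sigma^{-1}(P)}(b)$) and then propagating to arbitrary $y$ shows that $E$ is constant. Setting $\mu(b)$ equal to this constant value for each prime $b$ and extending multiplicatively completes the construction of $\mu$.
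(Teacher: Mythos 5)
Your overall architecture matches the paper's: reduce to bases that are not perfect powers via $\mu_{a^k}=\mu_a$, show that the prime exponent $E(y)=\mu_{h^{-1}(y)}(b)$ is locally constant by comparing the continuous map $\Psi$ at $y$ and at a nearby prime modulo a large auxiliary prime $r$, and then propagate by connectedness and the density of primes. The closedness of $\{x^b:x\in\IN\}$ for prime $b$ is a correct observation but an irrelevant detour; the paper never needs it. The genuine gap is precisely in the step you flag as the main obstacle. From $y^{E(P)}\equiv y^{E(y)}\pmod r$ you correctly obtain that $\mathrm{ord}_r(y)$ divides $E(P)-E(y)$, but the plan to conclude by making this order ``exceed the relevant difference'' cannot be executed: $E(P)$ is a priori unbounded, and $P$ is chosen only after $r$ (as a prime in a neighborhood depending on $r$), so no bound on $|E(P)-E(y)|$ is available at the moment $r$ must be fixed. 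The appeal to the closedness of $\Psi(\IN)$ to ``keep the exponents under control'' is not substantiated, and I do not see how it could be: a point of $\Psi(y)+r\IN_0$ may be arbitrarily large, so the largeness of $P^{E(P)}$ contradicts nothing.

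The paper's mechanism is different, and it is exactly where Lemma~\ref{l:Brunault} is really needed (not for closedness of sets of powers): one applies it to the base $a$ and to the prime $q:=\mu_a(n)$ itself, obtaining a prime $r\in 1+q\IN$ with $a^{(r-1)/q}\not\equiv 1\pmod r$. This forces $q$ to divide $\mathrm{ord}_r(a)$, hence $q$ divides $\mu_P(n)-\mu_a(n)$, hence $q$ divides $\mu_P(n)$; since $\mu_P(n)$ is prime (a fact you record but never exploit), this yields $\mu_P(n)=q$ outright, with no bound on the difference required. So a divisibility-plus-primality argument replaces your size argument, and without it the local constancy of $a\mapsto\mu_a(b)$ — and therefore the lemma — is not established. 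Your preliminary reduction $\mu_{a^k}=\mu_a$ and the chaining of overlapping neighborhoods through primes are fine and correspond to the paper's treatment of the set of perfect powers and its use of connectedness.
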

 
 \begin{proof} By Lemma~\ref{l:multi}, for every $a\in\IN\setminus\{1\}$ there exists a multiplicative bijection $\mu_a$ of $\IN$ such that $h(a^n)=h(a)^{\mu_a(n)}$ for all $n\in\IN$. Let $P:=\{n^{k+1}:n,k\in\IN\}$ be the set of all powers of natural numbers.
 
First we prove that for every $a\in \IN\setminus P$ and a prime $n$ there exists a  neighborhood $U_a\subset\IN_\tau$ of $a$ such that $\mu_a(n)=\mu_r(n)$ for every prime number $r\in U_a$. 
 
Observe that the image $q:=\mu_a(n)$ of the prime number $n$ under the  multiplicative bijection $\mu_a$ of $\IN$ is prime. By Lemma~\ref{l:Brunault}, there exists a prime number $p\in (1+q\IN)\setminus(\Pi_a\cup \Pi_{h(a^n)})$ such that $a^{(p-1)/q}\ne 1\mod p$.  By the continuity of the map $f:\IN\to\IN$, $f:x\mapsto h(x^n)$, the point $a$ has a neighborhood $V\in\tau_a$ such that $f(V)\subset f(a)+p\IN_0=h(a^n)+p\IN_0$. We claim that the neighborhood $U_a:=V\cap(a+p\IN_0)$ has the required property. To derive a contradiction, assume that $U_a$ contains a prime number $r\in U_a$ such that $\mu_r(n)\ne\mu_a(n)$. 
 It follows from $r\in a+p\IN_0$ that $r^{\mu_r(n)}-a^{\mu_r(n)}\in p\IZ$ and $$r^{\mu_r(n)}=h(r^n)=f(r)\in f(U_a)\subset f(a)+p\IN_0=h(a^n)+p\IN_0\subset a^{\mu_a(n)}+p\IZ.$$ Then $a^{\mu_a(n)}-a^{\mu_r(n)}=a^{\mu_a(n)}-r^{\mu_r(n)}+r^{\mu_r(n)}-a^{\mu_r(n)}\in p\IZ$ and hence $a^{|\mu_a(n)-\mu_r(n)|}-1$ is divisible by $p$.
 Since the multiplicative group $\IZ_p^*$ of the field $\IZ_p:=\IZ/p\IZ$ is cyclic of order $p-1$, the inclusion $a^{|\mu_a(n)-\mu_r(n)|}-1\in p\IZ$ implies that $\mu_r(n)-\mu_a(n)=\frac{p-1}{d}+(p-1)k$ for some divisor $d$ of $p-1$ and some $k\in\IZ$. The choice of $p$ in $1+\mu_a(n)\IN$ ensures that $p-1=\mu_a(n)\cdot m$ for some $m\in\IZ$.  Then $$\mu_r(n)=\mu _a(n)+\frac{p-1}d+(p-1)k= \mu_a(n)+\tfrac{m\mu_a(n)}{d}+\mu_a(n)km.$$
 Let us show that the prime number $q=\mu_a(n)$ does not divide $d$.
 Otherwise $\frac{p-1}d$ divides $\frac{p-1}{q}$ and $a^{(p-1)/d}=1\mod p$ would imply $a^{(p-1)/q}=1\mod p$, which contradicts the choice of $p$. This contradiction shows that the number $q=\mu_a(n)$ does not divide $d$ and hence $\frac{m}{d}$ is integer. Then $\mu_r(n)=\mu_a(n)(1+\frac{m}d+km)$ and we obtain that $\mu_r(n)$ is not prime, which is a desired contradiction showing that $\mu_a(n)=\mu_r(n)$ for every $r\in \Pi\cap U_a$.
 \smallskip
 
 Now we can prove that $\mu_a(n)=\mu_{b}(n)$ for arbitary $a,b\in \IN\setminus P$ and $n\in\Pi$. Since the Golomb space $\IN_\tau$ is connected, there exists a chain of points $a=a_0,a_1,\dots,a_m=b$ such that for every $i<m$ the intersection $U_{a_{i}}\cap U_{a_{i+1}}$ is not empty and hence contains some prime number $r_i$. Then $\mu_{a_i}(n)=\mu_{r_i}(n)=\mu_{a_{i+1}}(n)$ for all $i<m$ and hence $\mu_a(n)=\mu_{a_0}(n)=\mu_{a_n}(n)=\mu_{b}(n)$. 
 
Since the functions $\mu_a$ and $\mu_b$ are multiplicative, the equality $\mu_a(n)=\mu_b(n)$ holding for all prime $n\in\Pi$ implies the equality $\mu_a=\mu_b$. Choose any number $c\in\IN\setminus P$ and put $\mu:=\mu_c$. 

Finally, we prove that $\mu_a=\mu$ for any $a\in\IN$. This equality has been proved for any $a\in\IN\setminus P$. So, assume that $a\in P$. In this case $a=\alpha^k$ for some $\alpha\in \IN\setminus P$ and some $k\in\IN$.
We claim that $h(\alpha)\notin P$. Otherwise we could find numbers $\beta$ and $l>1$ such that $h(\alpha)=\beta^l$. By Lemma~\ref{l:monogen}, $\alpha\in h^{-1}(\beta^{\IN})\setminus\{h^{-1}(\beta)\}$, which means that $\alpha\in P$. But this contradicts the  choice of $\alpha$. This contradiction shows that $h(\alpha)\notin P$.

Then for any $n\in\IN$ we get
%$$
\begin{multline*}
h(a)^{\mu_a(n)}=h(a^n)=h(\alpha^{kn})=h(\alpha)^{\mu_{h(\alpha)}(kn)}=h(\alpha)^{\mu(kn)}=\\
=h(\alpha)^{\mu(k)\mu(n)}=(h(\alpha)^{\mu(k)})^{\mu(n)}=h(\alpha^k)^{\mu(n)}=h(a)^{\mu(n)}.
\end{multline*}
%$$
  \end{proof}

\begin{problem} Let $h$ be a homeomorphism of the Golomb space $\IN_\tau$. Is it true that $h(x^n)=h(x)^n$ for any $x\in\IN$?
\end{problem}

Theorem~\ref{mainH} implies that the Golomb space $\IN_\tau$ is not topologically homogeneous. On the other hand, we do not know the answer to the following intriguing problem (posed also in \cite{Ban}).

\begin{problem} Is the Golomb space rigid?
\end{problem}

We recall that a topological space $X$ is {\em rigid} if each homeomorphism $h:X\to X$  is equal to the identity map of $X$.

\begin{remark} A counterpart of the Golomb topology on domains (= commutative rings without zero divisors) was introduced and studied by Knopfmacher and Porubsk\'y \cite{KP}. In their recent preprint \cite{CLP} Clark, Lebowitz-Lockard and Pollack extended some results of this paper to the Golomb topology on domains.
\end{remark}

\section{Acknowledgement}

The authors express their sincere thanks to Patrick Rabau for a valuable remark on the formula for the closure in Lemma~\ref{basic}, to Gergely Harcos for the reference to the very helpful paper \cite{SL} on the Chebotar\"ev and Frobenius density theorems, and to the mathoverflow users Fran\c cois Brunault, David Loeffler and Paul Pollack for their help with the proof of Lemma~\ref{l:Brunault}.
\newpage

\end{document}